\newtheorem{theorem}{Theorem}[section]
\newtheorem{lemma}[theorem]{Lemma}
\newtheorem{proposition}[theorem]{Proposition}
\newtheorem{corollary}[theorem]{Corollary}
\theoremstyle{definition}
\newtheorem{definition}[theorem]{Definition}
\theoremstyle{definition}
\newtheorem{notation}[theorem]{Notation}
\theoremstyle{remark}
\newtheorem{remark}[theorem]{Remark}
\numberwithin{equation}{section} \setcounter{page}{1}
\newcommand{\Diff}{\textrm{Diff}}
\newcommand{\diff}{\textrm{diff}}
\newcommand{\Sp}{\textrm{Sp}}
\newcommand{\fraksp}{\mathfrak{sp}}
\newcommand{\HS}{\textrm{HS}}
\newcommand{\sgn}{\textrm{sgn}}
\newcommand{\Aut}{\textrm{Aut}}
\newcommand{\End}{\textrm{End}}
\newcommand{\Arg}{\textrm{Arg}}
\newcommand{\Diag}{\textrm{Diag}}
\newcommand{\Tr}{\textrm{Tr}}
\newcommand{\Ric}{\textrm{Ric}}
\newcommand{\SpQHS}{\mathfrak{sp}_{{\mbox{\tiny{HS}}}}^{{\mbox{\tiny{Q}}}}}
\newcommand{\SpHS}{\mathfrak{sp}_{\mbox{\tiny{HS}}}}
\begin{document}

\title[$\Diff(S^{1})$, $\Sp(\infty)$, Brownian motion, Ricci curvation]
{Brownian motion and Ricci curvature on an infinite dimensional symplectic
group related to the diffeomorphism group of the circle}

\author{MANG WU}
\email{mangwu@math.ucr.edu}

\keywords{diffeomorphism group of the circle,
infinite-dimensional symplectic group,
Brownian motion, Ricc curvature}

\begin{abstract}
An embedding of the group $\Diff(S^{1})$ of orientation preserving
diffeomorphims of the unit circle $S^1$ into an infinite-dimensional
symplectic group, $\Sp(\infty)$, is studied. The authors prove that
this embedding is not surjective. A Brownian motion is constructed
on $\Sp(\infty)$. This study is motivated by recent work of
H.~Airault, S. Fang and P.~Malliavin.
The Ricci curvature of the infinite-dimensional symplectic group
is computed. The result shows that in almost all directions, the
Ricci curvature is negative infinity.
\end{abstract}

\maketitle

\section{Introduction}
The group $\Diff(S^{1})$ of orientation preserving diffeomorphims of
the unit circle $S^1$ has been extensively studied for a long time.
One of the goals of the research has been to construct and study the
properties of a Brownian motion on this group. In \cite{AirMall2001}
H.~Airault and P.~Malliavin considered an embedding of $\Diff(S^1)$
into an infinite-dimensional symplectic group.

This group, $\Sp(\infty)$, can be represented as a certain
infinite-dimensional matrix group. For such matrix groups, the
method of\cite{Gordina2000a, Gordina2005a} can be used to construct
a Brownian motion living in the group. This construction relies on
the fact that these groups can be embedded into a larger Hilbert
space of Hilbert-Schmidt operators. We use the same method to
construct a Brownian motion on $\Sp(\infty)$. One of the advantages
of Hilbert-Schmidt groups is that one can associate an
infinite-dimensional Lie algebra to such a group, and this Lie
algebra is a Hilbert space. This is not the case with $\Diff(S^1)$,
as an infinite-dimensional Lie algebra associated with $\Diff(S^1)$
is not a Hilbert space with respect to the inner product compatible
with the symplectic structure on $\Diff(S^1)$.

In the current paper, we describe in detail the embedding of
$\Diff(S^1)$ into $\Sp(\infty)$, and construct a Brownian motion on
$\Sp(\infty)$.
We also compute the Ricci curvature of $\Sp(\infty)$.
Our motivation comes from an attempt to use this
embedding to better understand Brownian motion in $\Diff(S^1)$ as
studied by H.~Airault, S. Fang and P.~Malliavin in a number of
papers (e.g. \cite{AirMall2001, AirMall2006, Fang2002,
FangLuo2007}). One of the main results of the paper is Theorem
\ref{t.4.6}, where we describe the embedding of $\Diff(S^1)$ into
$\Sp(\infty)$ and prove that the map is not surjective. Theorem
\ref{Main2} gives the construction of a Brownian motion on
$\Sp(\infty)$. In order for this Brownian motion to live in the
group we are forced to choose a non-$\operatorname{Ad}$-invariant
inner product on the Lie algebra of $\Sp(\infty)$. This fact has a
potential implication for this Brownian motion not to be
quasi-invariant for the appropriate choice of the Cameron-Martin
subgroup of $\Sp(\infty)$. This is in contrast to results in
\cite{AirMall2006}. The latter can be explained by the fact that the
Brownian motion we construct in Section \ref{s.6} lives in a
subgroup of $\Sp(\infty)$ whose Lie algebra is much smaller than the
full Lie algebra of $\Sp(\infty)$.

\section{The spaces $H$ and $\mathbb{H}_\omega$}

\begin{definition}\label{def.Omega}
Let $H$ be the space of complex-valued $C^\infty$ functions on the
unit circle $S^1$ with the mean value $0$. Define a bilinear form
$\omega$ on $H$ by
\[
\omega(u,v)=\frac{1}{2\pi}\int_0^{2\pi}uv'd\theta,
\hspace{.2in}\text{ for any } u,v\in H.
\]
\end{definition}

\begin{remark} By using integration by parts, we see that the form
$\omega$ is anti-symmetric, that is,  $\omega(u,v)=-\omega(v,u)$ for
any $u, v \in H$.
\end{remark}
Next we define an inner product $(\cdot,\cdot)_\omega$ on $H$ which
is compatible with the form $\omega$. First, we introduce a complex
structure on $H$, that is, a linear map $J$ on $H$ such that
$J^2=-id$. Then the inner product is defined by $(u,v)_\omega=\pm
\omega(u,J\bar{v})$, where the sign depends on the choice of $J$.
The complex structure $J$ in this context is called the Hilbert
transform.

\begin{definition}\label{def.L2Space}
Let $\mathbb{H}_0$ be the Hilbert space of complex-valued $L^2$
functions on $S^1$ with the mean value $0$ equipped with the inner
product
\[
(u,v) = \frac{1}{2\pi}\int_0^{2\pi}u\bar{v}d\theta, \hspace{.2in}
\text{ for any }  u,v\in\mathbb{H}_0.
\]
\end{definition}


\begin{notation}\label{Decomp}Denote $\hat{e}_n=e^{in\theta}, n \in \mathbb{Z}\backslash\{0\}$, and
$ \mathcal{B}_H =\left\{\hat{e}_n, \ n \in \mathbb{Z}\backslash\{0\}
\right\}.$ Let $\mathbb{H}^+$ and $\mathbb{H}^-$ be the closed
subspaces of $\mathbb{H}_0$ spanned by $\{\hat{e}_n:n>0\}$ and
$\{\hat{e}_n:n<0\}$, respectively. By $\pi^+$ and $\pi^-$ we denote
the projections of $\mathbb{H}_0$ onto subspaces $\mathbb{H}^+$ and
$\mathbb{H}^-$, respectively. For $u\in\mathbb{H}_0$, we can write
$u=u_++u_-$, where $u_+=\pi^+(u)$ and $u_-=\pi^-(u)$.
\end{notation}

\begin{definition}\label{def.J}
Define the \textbf{Hilbert transformation} $J$ on $\mathcal{B}_H$ by
\[
J:\hat{e}_n \mapsto i\sgn(n)\hat{e}_n
\]
where $\sgn(n)$ is the sign of $n$, and then extended by linearity
to $\mathbb{H}_0$.
\end{definition}

\begin{remark}
In the above definition, $J$ is defined on the space
$\mathbb{H}_{0}$. We need to address the issue whether it is
well--defined on the \emph{subspace} $H$. That is, if $J(H)\subseteq
H$. We will see that if we modify the space $H$ a little bit, for
example, if we let $C_0^1(S^1)$ be the space of complex-valued $C^1$
functions on the circle with mean value zero, then $J$ is \emph{not}
well--defined on $C_0^1(S^1)$. This problem really lies in the heart
of Fourier analysis. To see this, we need to characterize $J$ by
using the Fourier transform.
\end{remark}

\begin{notation}
For $u\in\mathbb{H}_0$, let $\mathcal{F}:u\mapsto\hat{u}$ be the
\textbf{Fourier transformation} with $\hat{u}(n)=(u,\hat{e}_n)$. Let
$\hat{J}$ be a transformation on $l^2(\mathbb{Z}\backslash\{0\})$
defined by $\big(\hat{J}\hat{u}\big)(n)=i\sgn(n)\hat{u}(n)$ for any
$\hat{u}\in l^2(\mathbb{Z}\backslash\{0\})$.
\end{notation}

The Fourier transformation $\mathcal{F}:\mathbb{H}_0 \to
l^2(\mathbb{Z}\backslash\{0\})$ is an isomorphism of Hilbert spaces,
and  $J=\mathcal{F}^{-1} \circ \hat{J} \circ \mathcal{F}$.

\begin{proposition}\label{JWell}
The Hilbert transformation $J$ is well--defined on $H$, that is
$J(H)\subseteq H$.
\end{proposition}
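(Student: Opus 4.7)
The plan is to use the standard Fourier-analytic characterization of $C^\infty$ functions on $S^1$: a function $u \in \mathbb{H}_0$ lies in $H$ if and only if its Fourier coefficients $\hat{u}(n)$ are rapidly decreasing, i.e.\ for every $k \geq 0$ there is a constant $C_k$ with $|\hat{u}(n)| \leq C_k (1+|n|)^{-k}$ for all $n \in \mathbb{Z}\setminus\{0\}$. Granting this characterization, the proposition is almost immediate because the multiplier defining $\hat{J}$ has modulus one.

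First I would recall (or quickly sketch) why rapid decay characterizes smoothness: if $u$ is $C^\infty$, then repeated integration by parts in the definition $\hat{u}(n) = \frac{1}{2\pi}\int_0^{2\pi} u(\theta)e^{-in\theta}\,d\theta$ gives $\hat{u}(n) = (in)^{-k}\widehat{u^{(k)}}(n)$, and since $u^{(k)} \in \mathbb{H}_0$ its Fourier coefficients are bounded (in fact square-summable), yielding the decay bound. Conversely, if $\hat{u}(n)$ is rapidly decreasing, then the series $\sum_n (in)^k \hat{u}(n) \hat{e}_n$ converges absolutely and uniformly for each $k$, so the Fourier series of $u$ can be differentiated term by term arbitrarily often, giving $u \in C^\infty(S^1)$. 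Mean value zero corresponds to the absence of the $n=0$ term throughout.

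Next, given $u \in H$, write $v = J(u)$. By the identity $J = \mathcal{F}^{-1}\circ \hat{J} \circ \mathcal{F}$ stated just before the proposition, the Fourier coefficients of $v$ are $\hat{v}(n) = i\sgn(n)\hat{u}(n)$. Since $|i\sgn(n)| = 1$, for every $k \geq 0$ we have
\[
|\hat{v}(n)| = |\hat{u}(n)| \leq C_k(1+|n|)^{-k},
\]
so $\hat{v}$ is rapidly decreasing as well. By the characterization above, $v \in H$. Note also that $\hat{v}(0)$ never enters the picture since $\hat{J}$ is defined on $l^2(\mathbb{Z}\setminus\{0\})$, so the mean-value-zero condition is automatically preserved.

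There is no substantial obstacle here; the only subtle point is that the analogous statement fails for $C^1$, as flagged in the remark preceding the proposition. This is why I would be careful to invoke the full strength of $C^\infty$ (rapid decay of all orders) rather than a finite-regularity version: the multiplier $i\sgn(n)$ is bounded but discontinuous at $n=0$, and bounded multipliers preserve rapid decay of coefficients but need not preserve any fixed finite order of smoothness measured in a non-$L^2$ norm. Using $C^\infty \iff$ rapid decay sidesteps this difficulty entirely and gives a one-line proof once the characterization is in hand.
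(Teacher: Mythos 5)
Your proof is correct and follows essentially the same route as the paper: characterize membership in $H$ by rapid decay of Fourier coefficients (proved in both directions via integration by parts and term-by-term differentiation), then observe that $\hat{J}$ multiplies coefficients by the unimodular factor $i\sgn(n)$ and hence preserves rapid decay. The only minor point to make explicit is that the uniformly convergent differentiated series represents $Ju$ itself (its $L^2$ limit), which the paper handles by citing pointwise convergence for continuous functions.
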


\begin{proof}
The key of the proof is the fact that functions in  $H$ can be
completely characterized by their Fourier coefficients. To be
precise, let $u\in\mathbb{H}_0$ be continuous. Then $u$ is
$C^\infty$ if and only if $\lim_{n\to\infty}n^k\hat{u}(n)=0$ for any
$k\in\mathbb{N}$. From this fact, it follows immediately that $J$ is
well--defined on $H$, because $J$ only changes the signs of the
Fourier coefficients of a function $u\in H$.

For completeness of exposition, we give a proof of this fact. Though
the statement is probably a standard fact in the Fourier analysis,
we found it proven only in one direction in \cite{KatznelsonBook}.

We first assume that $u$ is $C^\infty$. Then
$u(\theta)=u(0)+\int_0^\theta u'(t)dt$. So
\begin{align*}
\hat{u}(n)
&=\frac{1}{2\pi}\Big(\int_0^{2\pi}\int_0^{2\pi}u'(t)\chi_{[0,\theta]}
dt\Big) e^{-in\theta}d\theta
=\frac{1}{2\pi}\int_0^{2\pi}\Big(\int_t^{2\pi}e^{-in\theta}d\theta\Big)u'(t)dt\\
&=-\frac{1}{2\pi in}\int_0^{2\pi}u'(t)-u'(t)e^{-int}dt
=\frac{\widehat{u'}(n)}{in},
\end{align*}
where we have used Fubini's theorem and the continuity of $u'$. Now,
$u'$ is itself $C^\infty$, so we can apply the procedure again. By
induction, we get
$\hat{u}(n)=\frac{\widehat{u^{(k)}}(n)}{\left(in\right)^{k}}$. But
from the general theory of Fourier analysis,
$\widehat{u^{(k)}}(n)\to 0$ as $n\to\infty$. Therefore
$n^k\hat{u}(n)\to 0$ as $n\to\infty$.

Conversely, assume $u$ is such that for any $k$,
$n^k\hat{u}(n)\to 0$ as $n\to\infty$.
Then the Fourier series of $u$ converges uniformly.
Also by assumption that $u$ is continuous, the Fourier series converges
to $u$ for all $\theta\in S^1$ (see Corollary I.3.1 in \cite{KatznelsonBook}).
So we can write $u(\theta) = \sum_{n\neq 0} \hat{u}(n) e^{in\theta}$.

Fix a point $\theta\in S^1$,
\[
u'(\theta)
= \left.\frac{d}{dt}\right|_{t=\theta}
\sum_{n\neq 0} \hat{u}(n) e^{int}
=\lim_{t\to\theta}\lim_{N\to\infty}
\sum_{n=-N}^{N} \hat{u}(n)
\frac{e^{int}-e^{in\theta}}{t-\theta}.
\]
Note that the derivatives of $\cos nt$ and $\sin nt$ are all bounded
by $|n|$. So by the mean value theorem, $|\cos nt-\cos n\theta|\le
|n||t-\theta|$, and $|\sin nt-\sin n\theta|\le |n||t-\theta|$. So
\[
\Big|\frac{e^{int}-e^{in\theta}}{t-\theta}\Big| \le 2|n|,
\hspace{.2in}\text{ for any } t,\theta\in S^1.
\]
Therefore, by the growth condition on the Fourier coefficients
$\hat{u}$, we have
\[
\lim_{N\to\infty}\sum_{n=-N}^{N}
\hat{u}(n) \frac{e^{int}-e^{in\theta}}{t-\theta}
\]
converges at the fixed $\theta\in S^1$ and the convergence is uniform in $t\in S^1$.
Therefore we can interchange the two limits, and obtain
\[
\Big(\sum_{n\neq 0} \hat{u}(n) e^{in\theta}\Big)'
=\sum_{n\neq 0} \hat{u}(n) in e^{in\theta},
\]
which means we can differentiate term by term. So the Fourier
coefficients of $u'$ are given by $\hat{u'}(n)=in\hat{u}(n)$.
Clearly, $\hat{u'}$ satisfies the same condition as $\hat{u}$:
$n^k\hat{u'}(n)\to 0$ as $n\to\infty$. By induction, $u$ is
$j$-times differentiable for any $j$. Therefore, $u$ is $C^\infty$.
\end{proof}

\begin{proposition}
Let $C_0^1(S^1)$ be the space of complex-valued $C^1$ functions on
the circle with the mean value zero. Then the Hilbert transformation
$J$ is \emph{not} well--defined on $C_0^1(S^1)$, i.e.,
$J(C_0^1(S^1))\nsubseteq C_0^1(S^1)$.
\end{proposition}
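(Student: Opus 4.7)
My plan is to reduce the failure of $J$ to preserve $C_0^1(S^1)$ to the (classical) failure of $J$ to preserve continuity on $S^1$. The crucial observation I would exploit is that $J$ commutes with differentiation: since $J$ is the Fourier multiplier $\hat{e}_n\mapsto i\sgn(n)\hat{e}_n$ and $\partial_\theta$ is the Fourier multiplier $\hat{e}_n\mapsto in\hat{e}_n$, these two multipliers commute at the level of Fourier coefficients, so $(Ju)' = J(u')$ for any $u$ smooth enough for both sides to make sense (and, more generally, in the distributional sense on $L^2$).

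With this commutation in hand, I would argue by contradiction. Assume $J(C_0^1(S^1))\subseteq C_0^1(S^1)$. I would then pick a continuous, mean-zero function $g\in\mathbb{H}_0\cap C(S^1)$ whose Hilbert transform $Jg$ is \emph{not} continuous, and set
\[
u(\theta) \;=\; \int_0^\theta g(t)\,dt \;-\; c,
\]
with $c$ chosen so that $u$ has mean zero. Because $g$ has mean zero, $u$ is a well-defined $2\pi$-periodic function, and it is continuously differentiable with $u'=g$, so $u\in C_0^1(S^1)$. Under the standing assumption $Ju\in C_0^1(S^1)$, whence $(Ju)'$ is continuous; but $(Ju)' = J(u') = Jg$, forcing $Jg$ to be continuous, contradicting the choice of $g$.

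The genuinely non-trivial input is the existence of a continuous mean-zero $g$ whose Hilbert transform fails to be continuous; this is the classical fact that $J$ is not bounded on $C(S^1)$, and it will be the main technical step. I would either cite it or exhibit an explicit example using the uniformly bounded partial sums
\[
S_N(\alpha) \;=\; \sum_{j=1}^N\frac{\sin j\alpha}{j}
\]
of the sawtooth series, whose conjugates $\tilde S_N(\alpha)=\sum_{j=1}^N (\cos j\alpha)/j$ satisfy $\tilde S_N(0) = \sum_{j=1}^N 1/j \asymp \log N\to\infty$. A Fej\'er-type lacunary sum
\[
g(\theta) \;=\; \sum_{k=1}^\infty \frac{1}{k^2}\, S_{n_k}(\theta - \theta_k),
\]
with $n_k$ growing sufficiently fast and $\theta_k$ accumulating at a chosen point $\theta_\ast$, provides a continuous $g$ whose Hilbert transform is unbounded near $\theta_\ast$. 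That combinatorial construction is the only delicate ingredient I anticipate; once $g$ is in hand, the commutation $(Ju)'=J(u')$ delivers the contradiction at once.
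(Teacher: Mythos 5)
Your proposal is correct and follows essentially the same route as the paper: both arguments cite the classical fact (Katznelson) that $J$ fails to preserve $C(S^1)$, take the mean-zero primitive of the bad continuous function to obtain an element of $C_0^1(S^1)$, and then use that $J$ commutes with differentiation to conclude. Your identity $(Ju)'=J(u')$ is simply a more streamlined version of the paper's bookkeeping, which reaches the same conclusion via the projections $\pi^{\pm}$, the coefficient relation $\hat{v}(n)=\hat{u}(n)/(in)$, and the identities $ig\pm Jg=2ig_{\pm}$.
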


\begin{proof}
Let $C(S^1)$ be the space of continuous functions on the circle. In
 \cite{KatznelsonBook}, it is shown that there
exists a function in $C(S^1)$ such that the corresponding Fourier
series does not converges \emph{uniformly} \cite[Theorem
II.1.3]{KatznelsonBook}, and therefore there exists an $f\in C(S^1)$
such that $Jf\notin C(S^1)$ \cite[Theorem II.1.4]{KatznelsonBook}.
Now take $u=f-f_0$ where $f_0$ is the mean value of $f$. Then $u$ is
a continuous function on the circle with the mean value zero, and
$Ju$ is \emph{not} continuous.

Using Notation \ref{Decomp} let us write $u=u_+ + u_-$. Then we can
use the relation
\[
iu+Ju=2iu_+ \hspace{.1in}\mbox{and}\hspace{.1in} iu-Ju=2iu_-.
\]
to see that $u_+$ and $u_-$ are \emph{not} continuous. Integrating
$u=u_+ + u_-$, we have
\[
\int_0^t u(\theta)d\theta
=\int_0^t u_+(\theta)d\theta
+\int_0^t u_-(\theta)d\theta.
\]
Denote the three functions in the above equation by $v,v_1,v_2$.
By theorem I.1.6 in \cite{KatznelsonBook},
\[
\hat{v}(n)=\frac{\hat{u}(n)}{in},
\hspace{.1in}\mbox{and}\hspace{.1in}
\hat{v_1}(n)=\frac{\hat{u}_+(n)}{in},
\hat{v_2}(n)=\frac{1}{in}\hat{u}_-(n) \mbox{ for } n\neq 0.
\]

Let $g=v-v_0$ where $v_0$ is the mean value of $v$. Then $g\in C_0^1(S^1)$.
Write $g=g_+ + g_-$ \ref{Decomp}.
Then $g_+=v_1-(v_1)_0$ and $g_-=v_2-(v_2)_0$ where
$(v_1)_0$ and $(v_2)_0$ are the mean values of $v_1$ and $v_2$ respectively.
Then $g_+,g_-\notin C_0^1(S^1)$ since $v_1'=u_+,v_2'=u_-$ are \emph{not} continuous.

By the relation
\[
ig+Jg=2ig_+ \hspace{.1in}\mbox{and}\hspace{.1in} ig-Jg=2ig_-,
\]
we see that $Jg\notin C_0^1(S^1)$.
\end{proof}

\begin{notation}\label{nota.Inner}
Define an $\mathbb{R}$-bilinear form $(\cdot,\cdot)_\omega$ on $H$
by
\[
(u,v)_\omega=-\omega(u,J\bar{v}) \hspace{.2in}\text{ for any }
u,v\in H.
\]

\end{notation}

\begin{proposition}
$(\cdot,\cdot)_\omega$ is an inner product on $H$.
\end{proposition}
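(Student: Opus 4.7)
The plan is to translate the definition into Fourier coefficients, where the inner-product properties become transparent. Write $u = \sum_{n\neq 0} \hat u(n)\hat e_n$ and $v = \sum_{n\neq 0} \hat v(n)\hat e_n$. By the rapid-decay fact established in the proof of Proposition \ref{JWell}, the Fourier coefficients of functions in $H$ decay faster than any polynomial, so all series that arise below converge absolutely and may be differentiated and integrated termwise. Since $\overline{\hat e_n}=\hat e_{-n}$, one gets $\bar v = \sum_n \overline{\hat v(-n)}\,\hat e_n$, hence
\[
J\bar v \;=\; \sum_{n\neq 0} i\,\sgn(n)\,\overline{\hat v(-n)}\,\hat e_n,
\qquad
(J\bar v)' \;=\; -\sum_{n\neq 0} |n|\,\overline{\hat v(-n)}\,\hat e_n.
\]
Plugging into the defining integral for $\omega$ and using orthogonality of the exponentials, the only surviving terms are those with $k+m=0$, which yields the closed form
\[
(u,v)_\omega \;=\; -\omega(u,J\bar v)\;=\; \sum_{n\neq 0} |n|\,\hat u(n)\,\overline{\hat v(n)}.
\]

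From this expression the inner product axioms are immediate. The form is $\mathbb{R}$-bilinear by the $\mathbb{R}$-linearity of $u\mapsto \hat u(n)$ and $v\mapsto \overline{\hat v(n)}$; conjugate symmetry $(v,u)_\omega = \overline{(u,v)_\omega}$ follows by swapping $\hat u,\hat v$ in each summand; and the form is positive definite because
\[
(u,u)_\omega \;=\; \sum_{n\neq 0} |n|\,|\hat u(n)|^{2} \;\ge\; 0,
\]
with equality forcing $\hat u(n)=0$ for every $n\neq 0$, which together with the mean-zero hypothesis $\hat u(0)=0$ gives $u=0$.

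The only step that requires any real care is the justification of the termwise operations (differentiation of $J\bar v$, interchange of sum and integral in the computation of $\omega(u,J\bar v)$). This is where the assumption $u,v\in H$, rather than merely $u,v\in\mathbb{H}_0$, is used: Proposition \ref{JWell} gives $n^k\hat u(n)\to 0$ for all $k$, so every series involved converges absolutely and uniformly, and the manipulations are legitimate. Beyond this bookkeeping there is no substantive obstacle.
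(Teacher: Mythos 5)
Your proof is correct, and it is a streamlined variant of the paper's argument rather than an identical one. The paper verifies the three axioms separately: $\mathbb{C}$-homogeneity in the first slot directly from bilinearity of $\omega$, conjugate symmetry by splitting $u=u_++u_-$, $v=v_++v_-$ as in Notation \ref{Decomp} and invoking the listed facts about $J$ and conjugation on $H^{\pm}$, and positivity by computing $(u,u)_\omega=\sum_{n\neq0}|n||\hat u(n)|^2$. You instead derive once and for all the polarized identity $(u,v)_\omega=\sum_{n\neq0}|n|\,\hat u(n)\overline{\hat v(n)}$, from which every axiom is read off at a glance; your computation of $(J\bar v)'$ and the diagonal extraction $k+m=0$ are correct, and the termwise differentiation and sum--integral interchange are legitimately justified by the rapid decay of Fourier coefficients of $C^\infty$ functions established in Proposition \ref{JWell}. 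What your route buys is uniformity (symmetry and positivity come from the same formula, and the paper's somewhat sketchy "one can show the two quantities are equal" step for symmetry is replaced by an explicit identity); what the paper's route buys is that the $H^+\oplus H^-$ bookkeeping it sets up is reused later for the block-matrix structure of $\Sp(\infty)$. One small presentational point: you only assert $\mathbb{R}$-bilinearity, whereas the paper checks $\mathbb{C}$-linearity in the first argument (as one should for a Hermitian inner product); this is not a gap, since $\widehat{\lambda u}(n)=\lambda\hat u(n)$ makes full sesquilinearity immediate from your closed form, but you should state it explicitly.
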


\begin{proof}
We need to check that $(\cdot,\cdot)_\omega$ satisfies the following
properties (1) $(\lambda u,v)_\omega=\lambda(u,v)_\omega$ for
$\lambda\in\mathbb{C}$; (2) $(v,u)_\omega=\overline{(u,v)_\omega}$;
(3) $(u,u)_\omega>0$ unless $u=0$.

(1) for $\lambda\in\mathbb{C}$,
\[
(\lambda u,v)_\omega =-\omega(\lambda u,J\bar{v})
=-\lambda\cdot\omega(u,J\bar{v}) =\lambda\cdot(u,v)_\omega.
\]

To prove (2) and (3), we need some simple facts:
$H^+=\pi^+(H)\subseteq H$ and $H^-=\pi^-(H)\subseteq H$, and
$H=H^+\oplus H^-$. If $u\in H^+,v\in H^-$, then $(u,v)=0$. If $u\in
H^+$, then $\bar{u}\in H^-, Ju=iu, Ju\in H^+$. If $u\in H^-$, then
$\bar{u}\in H^+, Ju=-iu, Ju\in H^-$. $J\bar{u}=\overline{Ju}$.
$\widehat{u'}(n)=in\hat{u}(n)$. In particular, if $u\in H^+$, then
$u'\in H^+$; if $u\in H^-$, then $u'\in H^-$.

(2)
By definition,
\begin{align*}
& (v,u)_\omega=-\omega(v,J\bar{u})=\omega(J\bar{u},v)
=\frac{1}{2\pi}\int (J\bar{u})v' d\theta \notag
\\ &
\overline{(u,v)_\omega}=-\overline{\omega(u,J\bar{v})}
=\overline{\omega(J\bar{v},u)} =\frac{1}{2\pi}\int
\overline{J\bar{v}}\bar{u}' d\theta =\frac{1}{2\pi}\int (Jv)\bar{u}'
d\theta. \label{e.2.1}
\end{align*}
Write $u=u_+ + u_-$ and $v=v_+ + v_-$ as in Notation \ref{Decomp}.
Using the above fact, we can show that the above two quantities are
equal to each other.

(3) Write $u=u_+ + u_-$, then
\[
(u,u)_\omega
=\frac{1}{2\pi}\int (-i\overline{u_+}u_+' + i\overline{u_-}u_-') d\theta
=\sum_{n\neq 0}|n||\hat{u}(n)|^2.
\]
Therefore, $(u,u)_\omega>0$ unless $u=0$.
\end{proof}

\begin{definition}\label{def.OmegaBasis}
Let $\mathbb{H}_\omega$ be the completion of $H$ under the norm
$\|\cdot\|_\omega$ induced by the inner product
$(\cdot,\cdot)_\omega$. Define
\[
\mathcal{B}_\omega=
\left\{\tilde{e}_n=\frac{1}{\sqrt{n}}e^{in\theta},
n>0 \right\} \cup \left\{\tilde{e}_n=\frac{1}{i\sqrt{|n|}}e^{in\theta},
n<0 \right\}.
\]
\end{definition}

\begin{remark}\label{Completion}
$\mathbb{H}_\omega$ is a Hilbert space. Also the norm
$\|\cdot\|_\omega$ induced by the inner product
$(\cdot,\cdot)_\omega$ is \emph{strictly} stronger than the norm
$\|\cdot\|$ induced by the inner product $(\cdot,\cdot)$. So
$\mathbb{H}_\omega$ can be identified as a \emph{proper} subspace of
$\mathbb{H}_0$. The inner product $(\cdot,\cdot)_\omega$ or the norm
induced by it is sometimes called the $H^{1/2}$ metric or the
$H^{1/2}$ norm on the space $H$.

One can verify that $\mathcal{B}_\omega$ is an orthonormal basis of
$\mathbb{H}_\omega$. From the definition of the inner product
$(\cdot,\cdot)_\omega$, we have the relation
$\omega(u,v)=(u,\overline{Jv})_\omega$ for any $u,v\in H$. This can
be used to \emph{extend} the form $\omega$ to $\mathbb{H}_\omega$.

Finally, from the non--degeneracy of the inner
product $(\cdot,\cdot)_\omega$, we see that the form
$\omega(\cdot,\cdot)$ on $\mathbb{H}_\omega$ is also
non--degenerate.
\end{remark}

\section{An infinite-dimensional symplectic group}

\begin{definition}\label{def.BoundedOp}
Let $B(\mathbb{H}_\omega)$ be the space of \textbf{bounded
operators} on $\mathbb{H}_\omega$ equipped with the operator norm.
For an operator $A\in B(\mathbb{H}_\omega)$

\begin{enumerate}

\item
suppose $\bar{A}$ is an operator on $\mathbb{H}_\omega$ satisfying
$\bar{A}u=\overline{A\bar{u}}$ for any $u\in \mathbb{H}_\omega$,
then $\bar{A}$ is the \textbf{conjugate} of $A$;

\item suppose $A^\dag$ is an operator on $\mathbb{H}_\omega$ satisfying
$(Au,v)_\omega=(u,A^\dag v)_\omega$ for any $u,v\in\mathbb{H}_\omega$, then $A^\dag$ is the
\textbf{adjoint} of $A$;

\item  then $A^T=\bar{A}^\dag$ is the \textbf{transpose} of $A$;

\item  suppose $A^\#$ is an operator on $\mathbb{H}_\omega$ satisfying $\omega(Au,v)=\omega(u,A^\# v)$ for any
$u,v\in\mathbb{H}_\omega$, then $A^\#$ is the \textbf{symplectic
adjoint} of $A$.

\item $A$ is said to \textbf{preserve the form} $\omega$ if
$\omega(Au,Av)=\omega(u,v)$ for any $u,v\in\mathbb{H}_\omega$.
\end{enumerate}
\end{definition}

In the orthonormal basis $\mathcal{B}_\omega$, an operator $A\in
B(\mathbb{H}_\omega)$ can be represented by an infinite-dimensional
matrix, still denoted by $A$, with $(m,n)$th entry equal to
$A_{m,n}=(A\tilde{e}_n,\tilde{e}_m)_\omega$.

\begin{remark}\label{NegativeIdx}
If we represent an operator $A\in B(\mathbb{H}_\omega)$ by a matrix
$\{A_{m,n}\}_{m,n\in\mathbb{Z}\backslash\{0\}}$,
the indices $m$ and $n$ are allowed to be both positive and negative following
Definition \ref{def.OmegaBasis} of $\mathcal{B}_\omega$.
\end{remark}

The next proposition collects some simple facts about operations on
$B(\mathbb{H}_\omega)$ introduced in Definition \ref{def.BoundedOp}.

\begin{proposition}\label{SomeFacts1}
Let $A,B\in B(\mathbb{H}_\omega)$. Then
\begin{enumerate}
\item
$\overline{\tilde{e}_n}=i\tilde{e}_{-n}$,
$J\tilde{e}_n=i\sgn(n)\tilde{e}_n$,
$(\tilde{e}_n)'=in\tilde{e}_n$;

\item
$(\bar{A})_{m,n}=\overline{A_{-m,-n}}$;

\item
$(A^\dag)_{m,n}=\overline{A_{n,m}}$;

\item
$\bar{A}^\dag=\overline{A^\dag}$,
and
$(A^T)_{m,n}=A_{-n,-m}$;

\item
if $A=\bar{A}$, then $(A^\#)_{m,n}=\sgn(mn)\overline{A_{n,m}}$;

\item
$\overline{AB}=\bar{A}\bar{B}$,
$(AB)^\dag=B^\dag A^\dag$,
$(AB)^T=B^T A^T$,
$(AB)^\#=B^\# A^\#$;

\item
If $A$ is invertible,
then $\bar{A},A^T,A^\dag,A^\#$ are all invertible,
and $(\bar{A})^{-1}=\overline{A^{-1}}$,
$(A^T)^{-1}=(A^{-1})^T$,
$(A^\dag)^{-1}=(A^{-1})^\dag$,
$(A^\#)^{-1}=(A^{-1})^\#$;

\item
$(\pi^+)_{m,n}=\frac{1}{2}(\delta_{mn}+\sgn(m)\delta_{mn})$,
$(\pi^-)_{m,n}=\frac{1}{2}(\delta_{mn}-\sgn(m)\delta_{mn})$,
$\overline{\pi^+}=\pi^-$, $\overline{\pi^-}=\pi^+$,
$(\pi^+)^T=\pi^-$, $(\pi^-)^T=\pi^+$,
$(\pi^+)^\dag=\pi^+$, $(\pi^-)^\dag=\pi^-$;

\item
$J_{m,n}=i\sgn(m)\delta_{mn}$, $\bar{J}=J$, $J=i(\pi^+-\pi^-)$,
$J^T=-J$, $J^\dag=-J$, $J^2=-id$;

\item
$(A^\#)_{m,n}=\sgn(mn)A_{-n,-m}$.
\end{enumerate}
\end{proposition}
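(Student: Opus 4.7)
The plan is to prove item (1) first as a direct calculation from Definitions \ref{def.OmegaBasis} and \ref{def.J} together with termwise differentiation of $e^{in\theta}$; these three identities are the workhorses for everything else. Splitting into the cases $n>0$ and $n<0$, $\overline{\tilde{e}_n}=i\tilde{e}_{-n}$ is the observation that $\overline{1/\sqrt{n}}=(-i)\cdot(1/(i\sqrt{n}))$; $J\tilde{e}_n=i\sgn(n)\tilde{e}_n$ follows from the complex-linearity of $J$ and its action on $\hat{e}_n$ (Definition \ref{def.J}); and $(\tilde{e}_n)'=in\tilde{e}_n$ is immediate.

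With (1) in hand I would prove (2)--(5) and (10) by reading off matrix entries through the formula $A_{m,n}=(A\tilde{e}_n,\tilde{e}_m)_\omega$. For (2), expand $\bar{A}\tilde{e}_n=\overline{A(\overline{\tilde{e}_n})}=\overline{A(i\tilde{e}_{-n})}=-i\,\overline{A\tilde{e}_{-n}}$, write $A\tilde{e}_{-n}=\sum_k A_{k,-n}\tilde{e}_k$, and use $\overline{\tilde{e}_k}=i\tilde{e}_{-k}$ inside the inner product to pick off the coefficient $\overline{A_{-m,-n}}$. For (3), apply the defining identity $(A\tilde{e}_m,\tilde{e}_n)_\omega=(\tilde{e}_m,A^\dag\tilde{e}_n)_\omega$ and Hermitian symmetry. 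Item (4) is then formal from (2) and (3). For (5) and (10), I would use the extension $\omega(u,v)=(u,\overline{Jv})_\omega$ from Remark \ref{Completion} to rewrite the defining identity of the symplectic adjoint as $(Au,\overline{Jv})_\omega=(u,\overline{JA^\#v})_\omega$, which identifies $A^\#=J^{-1}\,\overline{A^\dag}\,\overline{J}$ after using $\overline{J}=J$; evaluating at basis vectors with (1), (3), and (9) produces the general formula $(A^\#)_{m,n}=\sgn(mn)A_{-n,-m}$ of (10), which specializes to (5) when $\bar{A}=A$.

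Items (6)--(9) are routine. For (6), the conjugate rule is $(\overline{AB})u=\overline{AB\bar{u}}=\bar{A}\bar{B}u$; the adjoint and transpose rules are standard Hilbert-space identities, and $(AB)^\#$ follows directly by applying the $\omega$-defining relation twice. Item (7) comes from applying each of the four operations to $AA^{-1}=A^{-1}A=\mathrm{id}$ and invoking (6) together with the evident fact that $\mathrm{id}$ is fixed by all four operations. For (8), $\pi^+\tilde{e}_n=\tilde{e}_n$ for $n>0$ and $0$ for $n<0$ (and vice versa for $\pi^-$) gives the explicit diagonal entries, and the conjugate/transpose/adjoint relations then follow by plugging these entries into (2)--(4). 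For (9), $J_{m,n}=i\sgn(m)\delta_{mn}$ is exactly (1), and the relations $\bar{J}=J$, $J=i(\pi^+-\pi^-)$, $J^T=-J$, $J^\dag=-J$, $J^2=-\mathrm{id}$ are one-line verifications from this diagonal matrix form. The only genuinely delicate point anywhere is tracking the signs when conjugating expressions of the form $i\tilde{e}_{-n}$, because complex conjugation is antilinear while the operators $A$ are complex-linear; once this bookkeeping is done carefully in (2), the rest of the proposition follows mechanically.
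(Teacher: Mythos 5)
Your proposal is correct and takes essentially the same route as the paper: the paper likewise treats items (1)--(9) as straightforward calculations and writes out only (10), which it derives from exactly the ingredients you use (the relation $\omega(u,v)=(u,\overline{Jv})_\omega$ of Remark \ref{Completion}, $\bar{J}=J$, $J^\dag=-J$, and item (2)), your version merely packaging that computation as the operator identity $A^\#=J^{-1}\overline{A^\dag}\,J=-JA^{T}J$ before reading off matrix entries, with (5) as the specialization $A=\bar{A}$. One cosmetic slip: in your gloss of (1) the coefficient identity should read $\overline{1/\sqrt{n}}=i\cdot\bigl(1/(i\sqrt{n})\bigr)$ (equivalently $(-i)\cdot\overline{1/\sqrt{n}}=1/(i\sqrt{n})$) rather than $\overline{1/\sqrt{n}}=(-i)\cdot\bigl(1/(i\sqrt{n})\bigr)$, though the stated relation $\overline{\tilde{e}_n}=i\tilde{e}_{-n}$ is of course correct.
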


\begin{proof}
All of these properties can be checked by straight forward
calculations. We only prove (10).
\begin{align*}
&
(A^\#)_{m,n}
=(A^\#\tilde{e}_n,\tilde{e}_m)_\omega
=-\omega(A^\#\tilde{e}_n,J\overline{\tilde{e}_m})
=\omega(J\overline{\tilde{e}_m},A^\#\tilde{e}_n)
\\&
=\omega(AJ\overline{\tilde{e}_m},\tilde{e}_n)
=-\omega(\tilde{e}_n,AJ\overline{\tilde{e}_m})
=-\omega(\tilde{e}_n,J(-J)AJ\overline{\tilde{e}_m})
\\&
=-\omega(\tilde{e}_n,J\overline{(-J\bar{A}J\tilde{e}_m)}),
\end{align*}
where in the last equality we used property (6),
$\overline{AB}=\bar{A}\bar{B}$, and  property (9), $\bar{J}=J$, so
that
$\overline{-J\bar{A}J\tilde{e}_m}=-\bar{J}\bar{\bar{A}}\bar{J}\overline{\tilde{e}_m}
=-JAJ\overline{\tilde{e}_m}$. Therefore,
\begin{align*}
&
(A^\#)_{m,n}
=-\omega(\tilde{e}_n,J\overline{(-J\bar{A}J\tilde{e}_m)})
=(\tilde{e}_n,-J\bar{A}J\tilde{e}_m)_\omega
=-(\tilde{e}_n,J\bar{A}J\tilde{e}_m)_\omega
\\&
=-(J^\dag\tilde{e}_n,\bar{A}J\tilde{e}_m)_\omega
=-(-J\tilde{e}_n,\bar{A}J\tilde{e}_m)_\omega
=(i\sgn(n)\tilde{e}_n,\bar{A}i\sgn(m)\tilde{e}_m)_\omega
\\&
=\sgn(mn)(\tilde{e}_n,\bar{A}\tilde{e}_m)_\omega
=\sgn(mn)\overline{(\bar{A}\tilde{e}_m,\tilde{e}_n)_\omega}
=\sgn(mn)\overline{(\bar{A})_{n,m}}
\\&
=\sgn(mn)A_{-n,-m}.
\end{align*}
\end{proof}

\begin{notation}\label{nota.BlockMatrix}
For $A\in B(\mathbb{H}_\omega)$, let $a=\pi^+ A \pi^+$, $b=\pi^+ A
\pi^-$, $c=\pi^- A \pi^+$, and $d=\pi^- A \pi^-$, where
$a:\mathbb{H}_\omega^+\to\mathbb{H}_\omega^+$,
$b:\mathbb{H}_\omega^-\to\mathbb{H}_\omega^+$,
$c:\mathbb{H}_\omega^+\to\mathbb{H}_\omega^-$,
$d:\mathbb{H}_\omega^-\to\mathbb{H}_\omega^-$. Then $A=a+b+c+d$ can
be represented as the following block matrix
\[
\left(
\begin{array}{ll}
a &b\\
c &d
\end{array}
\right).
\]
\end{notation}

If $A, B\in B(\mathbb{H}_\omega)$, then the block matrix
representation for $AB$ is exactly the multiplication of block
matrices for $A$ and $B$.

\begin{proposition}
Suppose $A\in B(\mathbb{H}_\omega)$ with the matrix $\{A_{m,n}\}_{m,
n \in \mathbb{Z}\backslash\{0\}}$. Then the following are equivalent
\begin{enumerate}
\item
$A=\bar{A}$;

\item
if $u=\bar{u}$, then $Au=\overline{Au}$;

\item
$A_{m,n}=\overline{A_{-m,-n}}$ (\ref{NegativeIdx});

\item
as a block matrix, $A$ has the form
$
\left(
\begin{array}{ll}
a & b\\
\bar{b} &\bar{a}
\end{array}
\right).
$
\end{enumerate}
\end{proposition}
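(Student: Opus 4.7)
The plan is to prove the four conditions equivalent by short cyclic and bi-implicational steps, leaning on Proposition \ref{SomeFacts1}(2), which says $(\bar{A})_{m,n}=\overline{A_{-m,-n}}$. This immediately gives (1)$\Leftrightarrow$(3): the operator identity $A=\bar{A}$ holds if and only if it holds entry by entry in the basis $\mathcal{B}_\omega$, which is precisely (3).

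For (1)$\Rightarrow$(2) I would just apply the definition of conjugate from Definition \ref{def.BoundedOp}(1): if $\bar{A}=A$ and $u=\bar{u}$, then $Au=\bar{A}u=\overline{A\bar{u}}=\overline{Au}$. For the converse, (2)$\Rightarrow$(1), I would split an arbitrary $u\in\mathbb{H}_\omega$ into ``real'' and ``imaginary'' parts $u_r=\tfrac12(u+\bar u)$, $u_i=\tfrac{1}{2i}(u-\bar u)$, so that $u=u_r+iu_i$ with $\bar{u}_r=u_r$ and $\bar{u}_i=u_i$. The only thing to check here is that $u_r,u_i$ actually lie in $\mathbb{H}_\omega$, but this is visible from $\overline{\tilde{e}_n}=i\tilde{e}_{-n}$ (Proposition \ref{SomeFacts1}(1)), so complex conjugation preserves $\mathbb{H}_\omega$. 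Applying (2) to $u_r$ and $u_i$ separately and using $\mathbb{R}$-linearity of conjugation gives
\[
\overline{A\bar{u}}=\overline{A(u_r-iu_i)}=\overline{Au_r}+i\overline{Au_i}=Au_r+iAu_i=Au,
\]
which is exactly $\bar{A}u=Au$.

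Finally for (3)$\Leftrightarrow$(4) I would unpack Notation \ref{nota.BlockMatrix}: the blocks $a,b,c,d$ are the restrictions of $A$ to the four index quadrants in $\mathbb{Z}\backslash\{0\}\times\mathbb{Z}\backslash\{0\}$ determined by the signs of $m,n$. Since conjugation swaps $\mathbb{H}^+_\omega$ and $\mathbb{H}^-_\omega$, $\bar a$ is naturally an operator $\mathbb{H}^-_\omega\to\mathbb{H}^-_\omega$ and $\bar b$ is an operator $\mathbb{H}^+_\omega\to\mathbb{H}^-_\omega$, exactly matching the positions of $d$ and $c$. Using Proposition \ref{SomeFacts1}(2) once more in each quadrant, the block identities $d=\bar a$ and $c=\bar b$ translate index by index into $A_{m,n}=\overline{A_{-m,-n}}$, which is (3); the converse reads off (4) from (3) the same way.

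There is no real obstacle: everything reduces to the matrix formula $(\bar A)_{m,n}=\overline{A_{-m,-n}}$ from Proposition \ref{SomeFacts1} and the closure of $\mathbb{H}_\omega$ under conjugation. The only mildly delicate point worth spelling out is the latter closure, since it is what legitimizes the real/imaginary decomposition used in (2)$\Rightarrow$(1).
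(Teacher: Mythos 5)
Your proposal is correct, and it agrees with the paper on most points: the implication (1)$\Rightarrow$(2) is word-for-word the paper's argument, and the equivalence of (1), (3), (4) is exactly what the paper disposes of by citing Proposition \ref{SomeFacts1} and Notation \ref{nota.BlockMatrix} (you simply spell out the quadrant bookkeeping that the paper leaves implicit, including the correct observation that conjugation swaps $\mathbb{H}_\omega^+$ and $\mathbb{H}_\omega^-$, so that $\bar a$ sits in the position of $d$ and $\bar b$ in the position of $c$). The one place where you genuinely diverge is (2)$\Rightarrow$(1). The paper tests condition (2) on the two specific real vectors $u=\tilde{e}_n+\overline{\tilde{e}_n}$ and $v=\tilde{e}_{-n}+\overline{\tilde{e}_{-n}}$, obtains two linear equations, and solves them for $A\tilde{e}_n$ to conclude $A\tilde{e}_n=\bar{A}\tilde{e}_n$ for every $n$. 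You instead decompose an arbitrary $u\in\mathbb{H}_\omega$ as $u=u_r+iu_i$ with $u_r=\tfrac12(u+\bar u)$, $u_i=\tfrac1{2i}(u-\bar u)$ fixed by conjugation, apply (2) to each part, and read off $\overline{A\bar u}=Au$ directly; the computation $\overline{A\bar u}=\overline{Au_r}+i\,\overline{Au_i}=Au_r+iAu_i=Au$ is correct. Your route avoids the small $2\times 2$ system and works vector-by-vector rather than basis-element-by-basis-element; its only prerequisite is that conjugation preserves $\mathbb{H}_\omega$, which you rightly flag and which follows from $\overline{\tilde e_n}=i\tilde e_{-n}$ (conjugation permutes the orthonormal basis up to unimodular factors, hence is an isometric anti-linear involution of $\mathbb{H}_\omega$). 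The paper's version buys slightly more explicitness in the matrix picture; yours is marginally more conceptual and general. Either is acceptable.
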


\begin{proof}
Equivalence of (1), (3) and (4) follows from
Proposition\ref{SomeFacts1} and Notation\ref{nota.BlockMatrix}.
First we  show that (1) is equivalent to (2).

[(1)$\Longrightarrow$(2)].
If $u=\bar{u}$, then $Au=\bar{A}u=\overline{A\bar{u}}=\overline{Au}$.

[(2)$\Longrightarrow$(1)]. Let
$u=\tilde{e}_n+\overline{\tilde{e}_n}$, and
$v=\tilde{e}_{-n}+\overline{\tilde{e}_{-n}}$. Then $u,v$ are
real-valued functions on the circle. Using Proposition
\ref{SomeFacts1} we have $\overline{\tilde{e}_n}=i\tilde{e}_{-n}$,
and therefore $Au=\overline{Au}$ and $Av=\overline{Av}$ imply
\begin{align*}
& A\tilde{e}_n + iA\tilde{e}_{-n} = \overline{A\tilde{e}_n} -
i\overline{A\tilde{e}_{-n}}
\\
& A\tilde{e}_n - iA\tilde{e}_{-n} = -\overline{A\tilde{e}_n} -
i\overline{A\tilde{e}_{-n}}.
\end{align*} Solving the above two
equations for $A\tilde{e}_n$, we have
\[
A\tilde{e}_n=-i\overline{A\tilde{e}_{-n}}
=\overline{A\overline{\tilde{e}_n}}=\bar{A}\tilde{e}_n
\]
with this being true for any $n\neq 0$, and so $A=\bar{A}$.
\end{proof}

\begin{proposition}\label{prop.Preserve}
Let $A\in B(\mathbb{H}_\omega)$.
The following are equivalent:

\begin{enumerate}
\item
$A$ preserves the form $\omega$;

\item
$\omega(Au,Av)=\omega(u,v)$ for any $u,v\in\mathbb{H}_\omega$;

\item
$\omega(A\tilde{e}_m,A\tilde{e}_n)=\omega(\tilde{e}_m,\tilde{e}_n)$
for any $m,n\neq 0$;

\item
$A^TJA=J$;

\item
$\sum_{k\neq 0}\sgn(mk)A_{k,m}A_{-k,-n}=\delta_{m,n}$ for any $m,n\neq0$.
\end{enumerate}
If we further assume that $A=\bar{A}$, then the following two are
equivalent to the above:

\begin{enumerate}
\item[(I)]
$a^T\bar{a}-b^\dag b=\pi^-$ and $a^T\bar{b}-b^\dag a=0$;

\item[(II)]
$\sum_{k\neq 0}\sgn(mk)A_{k,m}\overline{A_{k,n}}=\delta_{m,n}$ for any $m,n\neq0$.
\end{enumerate}

\end{proposition}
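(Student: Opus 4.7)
The plan is to prove the equivalences in two stages: first (1)--(5) without any additional hypothesis, then (I) and (II) under the symmetry $A=\bar A$.

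First stage. Items (1) and (2) are literally the same statement by Definition \ref{def.BoundedOp}(5). The implication (2)$\Rightarrow$(3) is obtained by specialization to basis vectors, and the converse (3)$\Rightarrow$(2) follows because $\omega$ is continuous and bilinear on $\mathbb{H}_\omega$ (Remark \ref{Completion}) and $A\in B(\mathbb{H}_\omega)$ is bounded, so the identity $\omega(Au,Av)=\omega(u,v)$ extends from the orthonormal basis $\mathcal{B}_\omega$ to all of $\mathbb{H}_\omega$. For (2)$\Leftrightarrow$(4), I would use the symplectic adjoint: by Definition \ref{def.BoundedOp}(4) and the non-degeneracy of $\omega$ on $\mathbb{H}_\omega$, the identity $\omega(Au,Av)=\omega(u, A^\# A v)$ shows that (2) is equivalent to $A^\# A = \mathrm{id}$. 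An entrywise check against parts (9) and (10) of Proposition \ref{SomeFacts1} gives the clean formula $A^\# = -J A^T J$, so $A^\# A = \mathrm{id}$ becomes $-JA^TJA = \mathrm{id}$, i.e.\ $A^T J A = -J^{-1} = J$, which is (4). Finally, (4)$\Leftrightarrow$(5) is a direct calculation: using Proposition \ref{SomeFacts1}(4) and (9) I would compute $(A^TJA)_{m,n} = i\sum_{k\neq 0}\sgn(k)\,A_{-k,-m}A_{k,n}$, set it equal to $J_{m,n}=i\sgn(m)\delta_{m,n}$, multiply through by $\sgn(m)$, and reindex ($k\mapsto -k$, $m\mapsto -m$, $n\mapsto -n$) to recover (5) on the nose.

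Second stage. Under the extra hypothesis $A=\bar A$, Proposition \ref{SomeFacts1}(2) says $\overline{A_{k,n}}=A_{-k,-n}$, so condition (II) is just (5) rewritten; this gives (II)$\Leftrightarrow$(5) at no cost. The equivalence (I)$\Leftrightarrow$(5) is the serious part. My plan is to split the sum in (5) according to the signs of $m$, $n$, and $k$, and to rewrite each of the four resulting block identities in terms of the blocks $a$, $b$, $c=\bar b$, $d=\bar a$ of Notation \ref{nota.BlockMatrix}. The block $m,n<0$ yields $d^\dag d - b^\dag b = I_{\mathbb{H}^-}$, which becomes $a^T\bar a - b^\dag b = \pi^-$ after using $d^\dag = (\bar a)^\dag = a^T$; the block $m<0,n>0$ yields $b^\dag a - d^\dag c = 0$, which becomes $a^T\bar b - b^\dag a = 0$. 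These are exactly the two relations of (I). The remaining two block identities (for $m,n>0$ and for $m>0,n<0$) are the complex conjugates of the first two and therefore hold automatically under $A=\bar A$ once (I) does, giving (I)$\Rightarrow$(5).

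The main obstacle is the bookkeeping in the second stage: one must carefully track how the four operations $\bar{\cdot}$, $^T$, $^\dag$, $^\#$ interact with the block decomposition and with the relation $A=\bar A$, and in particular verify the identities such as $(\bar a)^\dag = a^T$ and the fact that the two equations of (I) really do suffice to encode all four block equations extracted from (5). Everything else is routine index manipulation, once one checks that the infinite sums appearing in (4), (5), (I), and (II) converge in the strong sense suitable for operator equations in $B(\mathbb{H}_\omega)$, which is immediate from the boundedness of $A$ and of its blocks.
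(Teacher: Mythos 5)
Your proposal is correct and is essentially the paper's argument: the same ingredients (the relation $\omega(u,v)=(u,\overline{Jv})_\omega$, non-degeneracy, matrix entries in the basis $\mathcal{B}_\omega$, and the sign/block decomposition under $A=\bar A$) appear, merely rerouted — you pivot (2)$\Leftrightarrow$(4) through $A^{\#}A=\mathrm{id}$ with $A^{\#}=-JA^{T}J$ and read (I) off the sign blocks of (5)/(II), whereas the paper derives $A^{\dag}J\bar A=J$ and then block-multiplies $A^{T}JA=J$. One harmless bookkeeping slip: with the convention $A_{m,n}=(A\tilde e_n,\tilde e_m)_\omega$ it is the $m>0,\,n<0$ block of (II) that literally yields $a^{T}\bar b-b^{\dag}a=0$ (the $m<0,\,n>0$ block gives its conjugate $a^{\dag}b-b^{T}\bar a=0$), so your attribution of the two identities to the sign blocks is swapped, which does not affect the argument.
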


\begin{proof}
Equivalence of (1),(2) and (3) follows directly from Definition
\ref{def.BoundedOp}. Let us check the equivalency of (2) and (4).
First assume that (2) holds. By Remark \ref{Completion} we have
$\omega(u,v)=(u,J\bar{v})_\omega$ , and therefore
\[
\omega(Au,Av)=(Au,J\overline{Av})_\omega=(u,A^\dag J\overline{Av})_\omega.
\]
By assumption, $\omega(Au,Av)=\omega(u,v)$ for any
$u,v\in\mathbb{H}_\omega$. So by the non-degeneracy of the inner
product $(\cdot,\cdot)_\omega$, we have $A^\dag
J\overline{Av}=J\bar{v}$ for any $v\in\mathbb{H}_\omega$. By
definition of $\bar{A}$, we have $\overline{Av}=\bar{A}\bar{v}$. So
$A^\dag J\bar{A}\bar{v}=J\bar{v}$ for any $v\in\mathbb{H}_\omega$,
or $A^\dag J\bar{A}=J$. Taking conjugation of both sides and using
$\bar{J}=J$, we see that $A^T JA=J$.

Every step above is reversible, therefore we have implication in the
other direction as well.

Now we check the equivalency of (3) and (5). First, by Remark
\ref{Completion} $\omega(u,v)=(u,J\bar{v})_\omega$
 and Proposition \ref{SomeFacts1}
\[
\omega(\tilde{e}_m,\tilde{e}_n)
=(\tilde{e}_m,J\overline{\tilde{e}_n})_\omega
=-\sgn(m)\delta_{m,-n}.
\]
On the other hand, by the continuity of the form
$\omega(\cdot,\cdot)$ in both variables, we have
\begin{align*}
&
\omega(A\tilde{e}_m,A\tilde{e}_n)
=\omega\Big(\sum_k A_{k,m}\tilde{e}_k,\sum_k A_{l,n}\tilde{e}_l\Big)
\\ &
=\sum_{k,l}A_{k,m}A_{l,n}(-\sgn(k))\delta_{k,-l} =-\sum_k
\sgn(k)A_{k,m}A_{-k,n}.
\end{align*}
Now assuming
$\omega(A\tilde{e}_m,A\tilde{e}_n)=\omega(\tilde{e}_m,\tilde{e}_n)$,
we have
\[
-\sum_k \sgn(k)A_{k,m}A_{-k,n}=-\sgn(m)\delta_{m,-n},
\text{ for any } m,n\neq 0.
\]
By multiplying by $\sgn(m)$ both sides, and replacing $-n$ with $n$,
we get (5). Conversely, note that every step above is reversible,
therefore we have implication in the other direction.

We have proved equivalence of (1)-(5). Now assume $A=\bar{A}$. To
prove equivalence of (4) and (I), just notice that as block
matrices, $A,A^T$ and $J$ have the form
\[
\left(
\begin{array}{ll}
a & b\\
\bar{b} & \bar{a}
\end{array}
\right),
\hspace{.2in}
\left(
\begin{array}{ll}
a^\dag & b^T\\
b^\dag & a^T
\end{array}
\right),
\hspace{.1in}\mbox{and}\hspace{.1in}
i\left(
\begin{array}{ll}
\pi^+ & 0\\
0 & -\pi^-
\end{array}
\right).
\]
Equivalence of (5) and (II) follows 
from the relation
$A_{-k,-n}=\overline{A_{k,n}}$.
\end{proof}

\begin{proposition}\label{prop.Invertible}
Let $A\in B(\mathbb{H}_\omega)$.
If $A$ preserves the form $\omega$, then the following are equivalent:
\begin{enumerate}
\item
$A$ is invertible.

\item
$AJA^T=J$.

\item
$A^T$ preserves the form $\omega$.

\item
$\sum_k \sgn(mk)A_{m,k}A_{-n,-k}=\delta_{m,n}$ for any $m,n\neq0$.
\end{enumerate}

If we further assume that $A=\bar{A}$, then the following are
equivalent to the above:

\begin{enumerate}
\item[(I)]
$\bar{a}a^T-\bar{b}b^T=\pi^-$ and $\bar{b}a^\dag-\bar{a}b^\dag=0$.

\item[(II)]
$\sum_k \sgn(mk)A_{m,k}\overline{A_{n,k}}=\delta_{m,n}$ for any $m,n\neq0$.
\end{enumerate}

\end{proposition}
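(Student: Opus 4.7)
The plan is to reduce everything to the criterion from Proposition \ref{prop.Preserve}(4), namely that $A$ preserves $\omega$ if and only if $A^T J A = J$. The equivalence (2)$\iff$(3) comes for free by applying that criterion with $A^T$ in place of $A$: $A^T$ preserves $\omega$ iff $(A^T)^T J A^T = J$, which is precisely $AJA^T = J$.

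For (1)$\Rightarrow$(2), I would start from the identity $A^T J A = J$, which holds because $A$ preserves $\omega$. Since $A$, hence $A^T$, is invertible by Proposition \ref{SomeFacts1}(7), taking inverses of both sides and using $J^{-1} = -J$ (from $J^2 = -id$) cancels the two minus signs to give $A^{-1} J (A^T)^{-1} = J$, which rearranges to $AJA^T = J$. For the converse (3)$\Rightarrow$(1), both $A^T J A = J$ and $AJA^T = J$ are available; right-multiplication by $J^{-1}$ exhibits $J^{-1} A^T J$ as a left inverse of $A$ and $JA^T J^{-1}$ as a right inverse, and in a unital algebra such as $B(\mathbb{H}_\omega)$ the existence of both forces $A$ to be invertible (left inverse equals right inverse by the standard one-line computation).

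The remaining equivalences are routine. For (2)$\iff$(4), I would expand $(AJA^T)_{m,n}$ using $J_{m,n} = i\sgn(m)\delta_{m,n}$ and $(A^T)_{m,n} = A_{-n,-m}$ from Proposition \ref{SomeFacts1}, obtaining $i\sum_k \sgn(k) A_{m,k} A_{-n,-k}$, then compare with $J_{m,n}$ and multiply through by $\sgn(m)$. For the additional case $A = \bar{A}$, the block form $A = \left(\begin{array}{ll} a & b \\ \bar{b} & \bar{a} \end{array}\right)$ together with $A^T = A^\dag = \left(\begin{array}{ll} a^\dag & b^T \\ b^\dag & a^T \end{array}\right)$ (the latter using $\bar{a}^\dag = a^T$ and $\bar{b}^\dag = b^T$) allows a block-by-block calculation of $AJA^T$ against $J = \left(\begin{array}{ll} i\pi^+ & 0 \\ 0 & -i\pi^- \end{array}\right)$; the four resulting block identities come in two conjugate pairs, and the independent pair is exactly (I). Finally, (4)$\iff$(II) follows at once from $A_{-n,-k} = \overline{A_{n,k}}$, which is the matrix form of the hypothesis $A = \bar{A}$.

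The whole argument is essentially bookkeeping and no step is deep; the only care required is tracking the signs that $J^{-1} = -J$ introduces in the inversion step, and identifying the blocks of $A^T$ correctly in the reality case. Neither is a genuine obstacle, so the main interest of the proposition lies in its parallelism with Proposition \ref{prop.Preserve} rather than in any difficulty of proof.
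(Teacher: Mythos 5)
Your proposal is correct and follows essentially the same route as the paper: everything is reduced to the criterion $A^TJA=J$ of Proposition \ref{prop.Preserve}, the implication toward invertibility uses both identities $A^TJA=J$ and $AJA^T=J$, and (4), (I), (II) are obtained by the same entry-wise, block-matrix, and conjugation computations. The only (harmless) variation is that where the paper deduces injectivity from $A^TJA=J$ and surjectivity from $AJA^T=J$, you instead exhibit $J^{-1}A^TJ$ and $JA^TJ^{-1}$ as explicit left and right inverses, which is the same idea phrased algebraically.
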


\begin{proof} We will use several times the fact that if $A$ preserves $\omega$, then
$A^TJA=J$.

[(1)$\Rightarrow$(2)] Multiplying on the left by $(A^T)^{-1}$ and
multiplying on the right by $A^{-1}$ both sides, we get
$J=(A^T)^{-1}JA^{-1}$, and so $(A^{-1})^TJA^{-1}=J$. Taking inverse
of both sides, and using $J^{-1}=-J$, we have $A^TJA=J$.

[(2)$\Rightarrow$(1)]  As $J$ is injective, so is $A^TJA$, and
therefore $A$ is injective. On the other hand, by assumption
$AJA^T=J$. As $J$ is surjective, so $AJA^T$ is surjective too. This
implies that $A$ is surjective, and therefore $A$ is invertible.

Equivalence of (2) and (3) follows from  $(A^T)^T=A$ and Proposition
\ref{prop.Preserve}. Equivalence of (3) and (4) follows directly
from Proposition \ref{prop.Preserve} and the fact that
$(A^T)_{m,n}=A_{-n,-m}$.

Now assume that $A=\bar{A}$. Then equivalence of (3) and (I)can be
checked by using multiplication of block matrices as in the proof of
Proposition \ref{prop.Preserve}. Finally (4) is equivalent to (II)
as if $A=\bar{A}$, then $A_{-m,-n}=\overline{A_{m,n}}$.
\end{proof}

\begin{corollary}\label{AASharp}
Let $A\in B(\mathbb{H}_\omega)$ and $A=\bar{A}$.
Then the following are equivalent:
\begin{enumerate}
\item
$A$ preserves the form $\omega$ and is invertible;
\item
$A^\#A=A^\#A=id$;
\end{enumerate}
\end{corollary}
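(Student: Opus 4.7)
The plan is to reduce condition (2) to the matrix identities $A^T J A = J$ and $A J A^T = J$ that Propositions \ref{prop.Preserve} and \ref{prop.Invertible} have already identified with $\omega$-preservation and invertibility, respectively. The bridge will be an explicit formula for the symplectic adjoint,
\[
A^\# = -J A^T J,
\]
valid for any $A \in B(\mathbb{H}_\omega)$.

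To derive this formula I would start from the defining identity $\omega(Au,v) = \omega(u, A^\# v)$ and substitute $\omega(x,y) = (x, J\bar{y})_\omega$ from Remark \ref{Completion}. The left side then equals $(Au, J\bar v)_\omega = (u, A^\dag J\bar v)_\omega$, while the right side equals $(u, J\overline{A^\# v})_\omega$. Non-degeneracy of $(\cdot,\cdot)_\omega$ yields $A^\dag J \bar v = J\,\overline{A^\# v}$ for every $v \in \mathbb{H}_\omega$. Pushing the bar through (so that $\overline{A^\# v} = \bar{A^\#}\,\bar v$) and invoking the conjugation rules $\bar J = J$, $\overline{A^\dag} = A^T$ from Proposition \ref{SomeFacts1} together with $J^{-1} = -J$, I arrive at $A^\# = -J A^T J$.

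Given the formula, both implications are immediate. For (1)$\Rightarrow$(2): preservation gives $A^T J A = J$ by Proposition \ref{prop.Preserve}(4), hence $A^\# A = -J A^T J A = -J^2 = id$; and invertibility combined with preservation gives $A J A^T = J$ by Proposition \ref{prop.Invertible}(2), hence $A A^\# = -A J A^T J = id$. Conversely, for (2)$\Rightarrow$(1): the identity $A^\# A = id$ reads $-J A^T J A = id$, which rearranges (multiply by $-J$ on the left and use $J^2 = -id$) to $A^T J A = J$, so $A$ preserves $\omega$ by Proposition \ref{prop.Preserve}(4); and the two-sided identity $A A^\# = A^\# A = id$ exhibits $A^\#$ as a two-sided inverse of $A$.

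The only non-routine step is the derivation of $A^\# = -J A^T J$, where care must be taken with the placement of conjugates, since the symplectic adjoint is defined through the complex-bilinear $\omega$ whereas the ordinary adjoint is defined through the sesquilinear $(\cdot,\cdot)_\omega$. I do not foresee any genuine obstacle: everything reduces to unwinding the conventions of Definition \ref{def.BoundedOp} and Proposition \ref{SomeFacts1}.
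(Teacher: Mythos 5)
Your proposal is correct, but it proves the corollary by a genuinely different route than the paper. You first establish the operator identity $A^{\#}=-JA^{T}J$ (which is in fact already implicit in parts (9) and (10) of Proposition \ref{SomeFacts1}, since $(A^{\#})_{m,n}=\sgn(mn)A_{-n,-m}=(-JA^{T}J)_{m,n}$), and then reduce condition (2) to the operator equations $A^{T}JA=J$ and $AJA^{T}=J$, invoking Proposition \ref{prop.Preserve}(4) and Proposition \ref{prop.Invertible}(2); your derivation of the formula via $\omega(u,v)=(u,J\bar v)_{\omega}$, non-degeneracy, and $\overline{A^{\dag}}=A^{T}$, $\bar J=J$, $J^{2}=-id$ is careful and sound. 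The paper instead works entrywise: it computes $(A^{\#}A)_{m,n}=\sum_{k}\sgn(mk)A_{k,n}\overline{A_{k,m}}$ and $(AA^{\#})_{m,n}=\sum_{k}\sgn(nk)A_{m,k}\overline{A_{n,k}}$ using Proposition \ref{SomeFacts1}(5), and matches these against the coordinate conditions (II) of Propositions \ref{prop.Preserve} and \ref{prop.Invertible}. Your coordinate-free argument is slightly cleaner in handling both directions at once, and it shows the hypothesis $A=\bar A$ is not actually needed for this equivalence (the paper's route does use it, through Proposition \ref{SomeFacts1}(5) and the (II) conditions); the paper's route stays closer to its matrix-computational style and avoids re-deriving the adjoint formula. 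Note also that both you and the paper read condition (2), which is misprinted as $A^{\#}A=A^{\#}A=id$, as the two-sided identity $A^{\#}A=AA^{\#}=id$; this two-sidedness is genuinely needed, since in infinite dimensions a one-sided inverse would not give invertibility in the direction (2)$\Rightarrow$(1).
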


\begin{proof}
By Proposition \ref{SomeFacts1}
\begin{align*}
&(A^\#A)_{m,n}=\sum_{k\neq0}(A^\#)_{m,k}A_{k,n}
=\sum_{k\neq0}\sgn(mk)A_{k,n}\overline{A_{k,m}},\\
&(AA^\#)_{m,n}=\sum_{k\neq0}A_{m,k}(A^\#)_{k,n}
=\sum_{k\neq0}\sgn(nk)A_{m,k}\overline{A_{n,k}}.
\end{align*}
Therefore, by (II) in Proposition \ref{prop.Preserve} and (II) in
Proposition \ref{prop.Invertible} we have equivalence.
\end{proof}

\begin{definition}\label{def.2Norm}
Define a
(semi)norm $\|\cdot\|_2$ on $B(\mathbb{H}_\omega)$ such that for
$A\in B(\mathbb{H}_\omega)$, $\|A\|_2^2=\Tr(b^\dag b)=\Vert b
\Vert_{HS}$, where $b=\pi^+ A \pi^-$. That is, the norm $\|A\|_2$ is
just the Hilbert-Schmidt norm of the block $b$.
\end{definition}

\begin{definition}\label{def.SPInf}
An \textbf{infinite-dimensional symplectic group} $\Sp(\infty)$ is
the set of bounded operators $A$ on $H$ such that
\begin{enumerate}
\item $A$ is invertible;
\item $A=\bar{A}$;
\item $A$ preserves the form $\omega$;
\item $\|A\|_2<\infty$.
\end{enumerate}
\end{definition}

\begin{remark}\label{def.SPInf1}
If $A$ is a bounded operator on $H$, then $A$ can be extended to a
bounded operator on $\mathbb{H}_\omega$. Therefore, we can
equivalently define $\Sp(\infty)$ to be the set of operators $A\in
B(\mathbb{H}_\omega)$ such that
\begin{enumerate}
\item $A$ is invertible;
\item $A=\bar{A}$;
\item $A$ preserves the form $\omega$;
\item $\|A\|_2<\infty$.
\item $A$ is invariant on $H$, i.e., $A(H)\subseteq H$.
\end{enumerate}
\end{remark}

\begin{remark}\label{def.SPInf2}
By Corollary \ref{AASharp}, the definition of $\Sp(\infty)$ is also
equivalent to
\begin{enumerate}
\item $A=\bar{A}$;
\item $A^\#A=AA^\#=id$;
\item $\|A\|_2<\infty$.
\end{enumerate}
\end{remark}

\begin{proposition}
$\Sp(\infty)$ is a group.
\end{proposition}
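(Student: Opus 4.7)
My plan is to work with the characterization in Remark \ref{def.SPInf2}: $A\in\Sp(\infty)$ iff $A=\bar A$, $A^\# A=A A^\#=\mathrm{id}$, and $\|A\|_2<\infty$. The identity operator trivially satisfies all three conditions (its $(+,-)$-block vanishes), so the content is closure under multiplication and closure under inversion.

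For closure under multiplication, let $A,B\in\Sp(\infty)$. Reality is immediate from property (6) of Proposition \ref{SomeFacts1}: $\overline{AB}=\bar A\bar B=AB$. The same property gives $(AB)^\#(AB)=B^\# A^\# A B=B^\# B=\mathrm{id}$ and $(AB)(AB)^\#=\mathrm{id}$. The real work is the Hilbert--Schmidt bound on the off-diagonal block. Writing both operators in the block form of Notation \ref{nota.BlockMatrix} and multiplying as block matrices, one finds
\[
\pi^+ AB\,\pi^- \;=\; a_A b_B + b_A d_B,
\]
where the subscripts indicate the operator to which each block belongs. Since $A$ and $B$ are bounded, every block has finite operator norm, so the ideal property of Hilbert--Schmidt operators gives
\[
\|\pi^+ AB\,\pi^-\|_{\HS} \;\le\; \|a_A\|_{\mathrm{op}}\|b_B\|_{\HS}+\|b_A\|_{\HS}\|d_B\|_{\mathrm{op}} \;<\;\infty,
\]
so $\|AB\|_2<\infty$.

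For closure under inversion, the relation $A^\# A=A A^\#=\mathrm{id}$ immediately identifies $A^{-1}=A^\#$. Reality transfers via property (7) of Proposition \ref{SomeFacts1}: $\overline{A^{-1}}=(\bar A)^{-1}=A^{-1}$. Since $(A^\#)^\#=A$ (read off the entry formula (10) of Proposition \ref{SomeFacts1}), we get $(A^{-1})^\#(A^{-1})=A A^\#=\mathrm{id}$ and symmetrically on the other side. For the Hilbert--Schmidt condition, the same formula $(A^\#)_{m,n}=\sgn(mn)A_{-n,-m}$ shows that as $(m,n)$ ranges over indices with $m>0,n<0$, the entries of the $(+,-)$-block of $A^\#$ equal, up to sign, those of the $(+,-)$-block of $A$ under the bijection $(m,n)\mapsto(-n,-m)$. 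Hence the two blocks have the same Hilbert--Schmidt norm, and $\|A^{-1}\|_2=\|A\|_2<\infty$.

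The only analytic ingredient is the Hilbert--Schmidt ideal inequality used in the product step; all remaining conditions reduce to formal manipulations already catalogued in Proposition \ref{SomeFacts1}, so the main (indeed only) obstacle is the block-matrix bookkeeping in the multiplication argument.
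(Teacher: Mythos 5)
Your proposal is correct, and it splits the work the same way the paper does (identity and formal conditions are routine; the substance is the Hilbert--Schmidt bound for products and inverses), but the inverse step is handled by a genuinely different argument. For the product, your computation $\pi^+AB\pi^-=(\pi^+A\pi^+)(\pi^+B\pi^-)+(\pi^+A\pi^-)(\pi^-B\pi^-)$ plus the ideal property of Hilbert--Schmidt operators is exactly the paper's block-matrix estimate $\Vert ad+b\bar c\Vert_{HS}<\infty$, just written with projections instead of blocks. For the inverse, the paper never invokes $A^\#$: it writes $A^{-1}$ in block form, extracts the relations $ab'=-b\overline{a'}$ and $a'a+b'\bar b=I$ from $AA^{-1}=A^{-1}A=I$, and deduces $b'=-a'b\overline{a'}+b'\bar b b'$, which is Hilbert--Schmidt because $b,\bar b$ are. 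You instead use the characterization of Remark \ref{def.SPInf2} (equivalently Corollary \ref{AASharp}) to identify $A^{-1}=A^\#$ and then read off from $(A^\#)_{m,n}=\sgn(mn)A_{-n,-m}$ that the $(+,-)$ block of $A^\#$ is a signed, index-permuted copy of that of $A$, giving the sharper conclusion $\Vert A^{-1}\Vert_2=\Vert A\Vert_2$ with no block algebra at all; the trade-off is that your route leans on the $\#$-calculus of Proposition \ref{SomeFacts1} and Corollary \ref{AASharp}, while the paper's route needs only boundedness of the blocks and the two identities above. One caveat applies equally to both arguments: the conditions of reality, invertibility, form preservation, and (in the formulation of Remark \ref{def.SPInf1}) invariance of the subspace $H$ are dismissed as easy to verify, so your reliance on Remark \ref{def.SPInf2} is at the same level of rigor as the paper's own treatment rather than a gap.
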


\begin{proof}
First we show that if $A\in \Sp(\infty)$, then $A^{-1} \in
\Sp(\infty)$. By the assumption on $A$, it is easy to verify that
$A^{-1}$ satisfies (1), (2), (3) and (5) in Remark \ref{def.SPInf1}.
We need to show that $A^{-1}$ satisfies the condition (4), i.e.
$\|A^{-1}\|_2<\infty$. Suppose
\[
A=\left(
\begin{array}{ll}
a &b\\
\bar{b} &\bar{a}
\end{array}
\right) \hspace{.1in}\mbox{and}\hspace{.1in} A^{-1}=\left(
\begin{array}{ll}
a' &b'\\
\overline{b'} &\overline{a'}
\end{array}
\right),
\]
where by our assumptions all blocks are bounded operators, and in
addition $b$ is a Hilbert-Schmidt operator. We want to prove $b'$ is
also a Hilbert-Schmidt operator. $AA^{-1}=I$ and $A^{-1}A=I$ imply
that
\[
ab^{\prime}=-b\overline{a'}, \hskip0.1in  a'a+b'\bar{b}=I.
\]
The last equation gives $a'ab'+b'\bar{b}b'=b'$, and so

\[
b'=a'ab'+b'\bar{b}b'=-a'b\overline{a'}+b'\bar{b}b'
\]
which is a Hilbert-Schmidt operator as $b$ and $\bar{b}$ are
Hilbert-Schmidt. Therefore $\|A^{-1}\|_2<\infty$ and $A^{-1}\in
\Sp(\infty)$.

Next we show that if $A,B\in \Sp(\infty)$, then $AB\in \Sp(\infty)$.
By the assumption on $A$ and $B$, it is easy to verify that $AB$
satisfies (1), (2), (3) and (5) in Remark \ref{def.SPInf1}. We need
to show that $AB$ satisfies the condition (4), i.e.
$\|AB\|_2<\infty$. Suppose
\[
A=\left(
\begin{array}{ll}
a &b\\
\bar{b} &\bar{a}
\end{array}
\right) \hspace{.1in}\mbox{and}\hspace{.1in} B=\left(
\begin{array}{ll}
c &d\\
\bar{d} &\bar{c}
\end{array}
\right),
\]
where all blocks are bounded, and $\Vert b\Vert_{HS}, \Vert
d\Vert_{HS}<\infty$. Then
\[
AB=\left(
\begin{array}{ll}
ac+b\bar{d} & ad+b\bar{c}\\
\bar{b}c+\bar{a}\bar{d} & \bar{b}d+\bar{a}\bar{c}
\end{array}
\right).
\]
Then
\[
\Vert AB \Vert_{2}^{2}=\|ad+b\bar{c}\|_{HS} \leqslant
\|ad\|_{2}+\|b\bar{c}\|_{HS}<\infty,
\] since both $ad$ and
$b\bar{c}$ are Hilbert-Schmidt operators. Therefore
$\|AB\|_2<\infty$ and $AB\in \Sp(\infty)$.
\end{proof}

\section{Symplectic Representation of $\Diff(S^1)$}

\begin{definition}\label{d.4.1}
Let $\Diff(S^1)$ be the group of orientation preserving $C^\infty$
diffeomorphisms of $S^1$. $\Diff(S^1)$ acts on $H$ as follows
\[
(\phi.u)(\theta)=u(\phi^{-1}(\theta))
-\frac{1}{2\pi}\int_0^{2\pi} u(\phi^{-1}(\theta)) d\theta.
\]
\end{definition}
Note that if $u\in H$ is real-valued, then $\phi.u$ is real-valued
as well.

\begin{proposition}
The action of $\Diff(S^1)$ on $H$ gives a group homomorphism
\[
\Phi:\Diff(S^1)\to \Aut H
\]
defined by $\Phi(\phi)(u)=\phi.u$, for $\phi\in\Diff(S^1)$ and $u\in H$,
where $\Aut H$ is the group of automorphisms on $H$.
\end{proposition}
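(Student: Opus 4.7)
The plan is to break the claim into two pieces: first verify that for each fixed $\phi\in\Diff(S^1)$ the map $\Phi(\phi):u\mapsto\phi.u$ is actually an element of $\Aut H$, and then verify that $\phi\mapsto\Phi(\phi)$ respects the group operation. Once the homomorphism property is in hand, invertibility of each $\Phi(\phi)$ follows formally from invertibility in $\Diff(S^1)$, so the main content is really steps 1 and 2.

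For step 1, I would first check that $\Phi(\phi)(u)$ lies in $H$. Smoothness is immediate: $u\in C^{\infty}(S^1)$ and $\phi^{-1}\in\Diff(S^1)$, so $u\circ\phi^{-1}\in C^{\infty}(S^1)$, and subtracting a constant keeps it $C^{\infty}$. The mean zero condition holds by construction, since
\[
\frac{1}{2\pi}\int_0^{2\pi}(\phi.u)(\theta)\,d\theta
=\frac{1}{2\pi}\int_0^{2\pi}u(\phi^{-1}(\theta))\,d\theta
-\frac{1}{2\pi}\int_0^{2\pi}u(\phi^{-1}(\theta))\,d\theta=0.
\]
Linearity of $\Phi(\phi)$ is clear from the formula, since both $u\mapsto u\circ\phi^{-1}$ and $u\mapsto\frac{1}{2\pi}\int u(\phi^{-1}(\theta))d\theta$ are linear.

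For step 2, writing $c_\phi(u):=\frac{1}{2\pi}\int_0^{2\pi}u(\phi^{-1}(\theta))\,d\theta$, I would compute
\[
\Phi(\phi)\bigl(\Phi(\psi)u\bigr)(\theta)
=(\psi.u)(\phi^{-1}(\theta))-c_\phi(\psi.u)
=u(\psi^{-1}\phi^{-1}(\theta))-c_\psi(u)-c_\phi(\psi.u).
\]
The constant $c_\psi(u)$ is canceled by the constant term of $c_\phi(\psi.u)$, leaving exactly $u((\phi\circ\psi)^{-1}(\theta))-c_{\phi\circ\psi}(u)=\Phi(\phi\circ\psi)(u)(\theta)$. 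Also $\Phi(\operatorname{id})=\operatorname{id}_H$ is immediate. This gives the homomorphism property.

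Invertibility of each $\Phi(\phi)$ in $\End H$ is then automatic: $\Phi(\phi)\Phi(\phi^{-1})=\Phi(\operatorname{id})=\operatorname{id}_H$ and similarly on the other side, so $\Phi(\phi)\in\Aut H$. I do not expect any real obstacle here; the only point that requires a moment's care is bookkeeping the two mean-zero corrections in the composition computation, and in particular recognizing why the $c_\psi(u)$ term cancels. Once that cancellation is written out the rest is formal.
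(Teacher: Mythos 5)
Your proposal is correct and follows essentially the same route as the paper: check that $\phi.u$ is smooth with mean zero, note linearity, and verify $\Phi(\phi\psi)=\Phi(\phi)\Phi(\psi)$ by a direct computation in which the mean-zero correction from $\psi$ cancels against part of the correction from $\phi$ (the paper phrases this cancellation as the action annihilating constants, which is the same observation). Your explicit remark that invertibility then follows from $\Phi(\phi)\Phi(\phi^{-1})=\Phi(\operatorname{id})=\operatorname{id}_H$ matches the paper's concluding sentence that the image of $\Phi$ lies in $\Aut H$.
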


\begin{proof}
Let $u\in H$, then $\phi.u$ is  a $C^{\infty}$ function with the
mean value $0$, and so $\phi.u\in H$. It is also clear that
$\phi.(u+v)=\phi.u+\phi.v$ and $\phi.(\lambda u)=\lambda\phi.u$. So
$\Phi$ is well--defined as a map from $\Diff(S^1)$ to $\End H$, the
space of endomorphisms on $H$. Now let us check that $\Phi$ is a
group homomorphism. Suppose $\phi,\psi\in\Diff(S^1)$ and $u\in H$,
then
\begin{align*}
&
\Phi(\phi\psi)(u)(\theta)
=u\big((\phi\psi)^{-1}(\theta)\big)
-\frac{1}{2\pi}\int_0^{2\pi} u\big((\phi\psi)^{-1}(\theta)\big)d\theta
\\&
=u\big((\psi^{-1}\phi^{-1})(\theta)\big)
-\frac{1}{2\pi}\int_0^{2\pi} u\big((\psi^{-1}\phi^{-1})(\theta)\big)d\theta.
\end{align*}
On the other hand,
\begin{align*}
& \Phi(\phi)\Phi(\psi)(u)(\theta)
=\Phi(\phi)\left[u(\psi^{-1}(\theta)) -\frac{1}{2\pi}\int_0^{2\pi}
u(\psi^{-1}(\theta))d\theta\right]
\\&
=\Phi(\phi)\left[u(\psi^{-1}(\theta))\right]
=u\big((\psi^{-1}\phi^{-1})(\theta)\big)
-\frac{1}{2\pi}\int_0^{2\pi}
u\big((\psi^{-1}\phi^{-1})(\theta)\big)d\theta.
\end{align*}
So $\Phi(\phi\psi)=\Phi(\phi)\Phi(\psi)$. In particular, the image
of $\Phi$ is in the $\Aut H$.
\end{proof}

\begin{lemma}\label{l.4.4}
Any $\phi\in\Diff(S^1)$  preserves the form $\omega$, that is,
$\omega(\phi.u,\phi.v)=\omega(u,v)$ for any $ u,v\in H$.
\end{lemma}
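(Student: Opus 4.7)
The plan is to verify the invariance by a direct change of variables in the defining integral for $\omega$, exploiting the fact that $\phi$ is an orientation-preserving diffeomorphism of $S^{1}$.

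First I would write $\phi.u = u\circ\phi^{-1} - c_{u}$ where $c_{u}=\frac{1}{2\pi}\int_{0}^{2\pi} u(\phi^{-1}(\theta))\,d\theta$ is a constant. Since adding a constant does not affect the derivative, one gets
\[
(\phi.v)'(\theta) = v'(\phi^{-1}(\theta))\cdot (\phi^{-1})'(\theta).
\]
Substituting into the definition of $\omega$, it follows that
\[
\omega(\phi.u,\phi.v) = \frac{1}{2\pi}\int_{0}^{2\pi}\bigl[u(\phi^{-1}(\theta))-c_{u}\bigr]\, v'(\phi^{-1}(\theta))\,(\phi^{-1})'(\theta)\,d\theta.
\]

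Next I would perform the substitution $s=\phi^{-1}(\theta)$, so that $d\theta = \phi'(s)\,ds$ and $(\phi^{-1})'(\theta) = 1/\phi'(s)$. Because $\phi$ is orientation-preserving, $s$ also runs over a full period $[0,2\pi]$ with the correct orientation, so the Jacobian factors cancel exactly and the integral becomes
\[
\omega(\phi.u,\phi.v) = \frac{1}{2\pi}\int_{0}^{2\pi}\bigl[u(s)-c_{u}\bigr]\, v'(s)\,ds.
\]

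Finally I would split this into two pieces: the first piece is $\omega(u,v)$ by definition, and the second piece is $-\frac{c_{u}}{2\pi}\int_{0}^{2\pi} v'(s)\,ds$, which vanishes by periodicity of $v$. Hence $\omega(\phi.u,\phi.v)=\omega(u,v)$. The only subtlety worth flagging is that the orientation-preserving hypothesis is essential for the change-of-variables step to come out with the correct sign rather than with an overall minus sign; otherwise the argument is a routine calculation.
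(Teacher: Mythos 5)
Your proof is correct and follows essentially the same route as the paper: write $\phi.u=u\circ\phi^{-1}-c_u$, note the constant drops out of $(\phi.v)'$, change variables via the orientation-preserving diffeomorphism so the Jacobian factors cancel, and observe that the leftover term $c_u\int_0^{2\pi}v'(s)\,ds$ vanishes by periodicity. No gaps to report.
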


\begin{proof}
By Definition \ref{d.4.1} $\phi.u=u(\psi)-u_0,\phi.v=v(\psi)-v_0$,
where $\psi=\phi^{-1}$ and $u_0,v_0$ are the constants. Then
\begin{align*}
\omega(\phi.u,\phi.v)
&=\omega(u(\psi)-u_0,v(\psi)-v_0)
\\&
=\frac{1}{2\pi}\int_0^{2\pi}
\big(u(\psi(\theta))-u_0\big)
\big(v(\psi(\theta))-v_0\big)' d\theta
\\&
=\frac{1}{2\pi}\int_0^{2\pi}
u(\psi)v'(\psi)\psi'(\theta)d\theta
-\frac{1}{2\pi}\int_0^{2\pi}
u_0v(\psi(\theta))d\theta
\\&
=\frac{1}{2\pi}\int_0^{2\pi}u(\psi)v'(\psi)d\psi
\\&
=\omega(u,v).
\end{align*}
\end{proof}

We are going to prove that a diffeomorphism $\phi\in\Diff(S^1)$ acts
on $H$ as a bounded linear map, and that $\Phi(\phi)$ is in
$\Sp(\infty)$. The next lemma is a generalization of a proposition
in a paper of G.~Segal\cite{Segal1981}.

\begin{lemma}\label{lem.BigLemma}
Let $\psi\neq id \in\Diff(S^1)$ and $\phi=\psi^{-1}$.
Let
\[
I_{n,m}=(\psi.e^{im\theta},e^{in\theta})
=\frac{1}{2\pi}\int_0^{2\pi}e^{im\phi-in\theta}d\theta.
\]
Then
\begin{enumerate}
\item
$\displaystyle \sum_{n>0,m<0}|n||I_{n,m}|^2 <\infty$, and
$\displaystyle \sum_{m>0,n<0}|n||I_{n,m}|^2 <\infty$.

\item
For sufficiently large $|m|$ there is a constant $C$ independent of
$m$ such that
\begin{equation}\label{e.4.1}
\sum_{n\neq0}|n||I_{n,m}|^2 < C|m|.
\end{equation}
\end{enumerate}
\end{lemma}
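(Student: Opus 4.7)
The key observation is that $I_{n,m}=\widehat{h_m}(n)$ is the $n$-th Fourier coefficient of $h_m(\theta):=e^{im\phi(\theta)}$, a smooth unimodular function since $\phi\in\Diff(S^1)$. The two parts of the lemma separate according to the behavior of the phase $f(\theta):=m\phi(\theta)-n\theta$: part (2) is a global statement that follows cheaply from Parseval, while part (1) exploits the non-stationary regime $nm<0$ via integration by parts.

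For part (2), since $|h_m|\equiv 1$, Parseval gives $\sum_{n\neq 0}|I_{n,m}|^2=\|h_m\|_{L^2}^2=1$, and $h_m'=im\phi' h_m$ gives $\sum_{n\neq 0}n^2|I_{n,m}|^2=\|h_m'\|_{L^2}^2=m^2\|\phi'\|_{L^2}^2$. Cauchy--Schwarz then yields
\begin{equation*}
\sum_{n\neq 0}|n||I_{n,m}|^2 \leq \left(\sum_n|I_{n,m}|^2\right)^{\!1/2}\!\left(\sum_n n^2|I_{n,m}|^2\right)^{\!1/2} \leq \|\phi'\|_{L^2}\,|m|,
\end{equation*}
so one may take $C=\|\phi'\|_{L^2}$ (and in fact no size restriction on $|m|$ is actually needed).

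For part (1), the identity $\overline{I_{n,m}}=I_{-n,-m}$ is immediate from the integral formula, so the substitution $(n,m)\mapsto(-n,-m)$ gives $\sum_{m>0,\,n<0}|n||I_{n,m}|^2=\sum_{n>0,\,m<0}|n||I_{n,m}|^2$, and it suffices to treat the first sum. In this regime $\phi'>0$ forces
\begin{equation*}
|f'(\theta)|=|m|\phi'(\theta)+n\geq c\,(n+|m|),\qquad c:=\min\{1,\min_\theta\phi'\}>0,
\end{equation*}
and $|f^{(j)}|\leq C_j|m|$ for $j\geq 2$. Let $L:=(if')^{-1}\partial_\theta$, so that $Le^{if}=e^{if}$; then iterated integration by parts (no boundary terms, by $2\pi$-periodicity) gives
\begin{equation*}
I_{n,m}=\frac{1}{2\pi}\int_0^{2\pi}e^{if(\theta)}(L^*)^k(1)\,d\theta\qquad(k\geq 0).
\end{equation*}
A short induction shows $(L^*)^k(1)$ is a finite sum of terms $\prod_{i=1}^\ell f^{(j_i)}/(f')^{k+\ell}$ with each $j_i\geq 2$ and $\sum_i j_i=k+\ell$. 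Since $|f^{(j_i)}|\leq C|m|$ and $|f'|\geq c|m|$, the $\ell$ ratios $|f^{(j_i)}/f'|$ are uniformly bounded, and the remaining $k$ copies of $|f'|^{-1}\leq c^{-1}(n+|m|)^{-1}$ give $|(L^*)^k(1)|\leq C_k(n+|m|)^{-k}$, whence $|I_{n,m}|\leq C_k(n+|m|)^{-k}$ for every $k\geq 0$. Choosing $k=2$ and writing $\mu=|m|\geq 1$,
\begin{equation*}
\sum_{n,\mu\geq 1}\frac{n}{(n+\mu)^4}=\sum_{s\geq 2}\frac{1}{s^4}\sum_{\substack{n+\mu=s\\ n,\mu\geq 1}}n\leq \sum_{s\geq 2}\frac{s^2/2}{s^4}<\infty,
\end{equation*}
which finishes part (1).

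The main obstacle, and the only point beyond routine bookkeeping, is the uniform estimate $|(L^*)^k(1)|\leq C_k(n+|m|)^{-k}$. Naively each application of $L^*$ differentiates $1/f'$ and so pulls down a factor $f''=m\phi''$, an unwanted $|m|$ in the numerator that would be fatal for large $|m|$; the resolution is precisely the lower bound $|f'|\geq c|m|$ afforded by the non-stationary regime $nm<0$, so every unwanted $|m|$ is absorbed by a matching $|f'|^{-1}$, leaving a net gain of $(n+|m|)^{-1}$ per integration by parts. Part (2), by contrast, makes no use of $\sgn(nm)$, which is why its bound is only linear in $|m|$ rather than summable.
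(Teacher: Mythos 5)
Your proof is correct, but it takes a genuinely different route from the paper's. For part (1) the underlying idea is the same — in the regime $nm<0$ the phase $f(\theta)=m\phi(\theta)-n\theta$ is non-stationary, and repeated integration by parts gives arbitrary polynomial decay — but the implementations differ: the paper (following Segal) introduces the auxiliary maps $\phi_\tau(\theta)=\frac{\cos\tau\cdot\phi(\theta)-\sin\tau\cdot\theta}{\cos\tau-\sin\tau}$, shows they are diffeomorphisms when $\tau=\Arg(m+in)$ lies in suitable arcs, rewrites $I_{n,m}$ after a change of variables as a Fourier coefficient of $\psi_{\tau_{mn}}'$, obtains $|I_{n,m}|\lesssim |m-n|^{-k}$ uniformly over closed sub-arcs, and then sums by counting lattice pairs with $|m-n|=p$ in each sector; you instead run the standard non-stationary phase operator $L=(if')^{-1}\partial_\theta$ directly, using the lower bound $|f'|\ge c(n+|m|)$ to absorb the factors of $|m|$ coming from $f^{(j)}=m\phi^{(j)}$, which yields $|I_{n,m}|\lesssim (n+|m|)^{-k}$ (the same decay, since $|m-n|=n+|m|$ here), and you sum by grouping $n+|m|=s$; your conjugation symmetry $\overline{I_{n,m}}=I_{-n,-m}$ also dispenses with the second sector. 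For part (2) the difference is more substantial: the paper reuses the sector estimates of part (1), splitting the sum at $N_m\approx m\tan(\alpha_0)$ and applying Parseval only to the bulk, which is why it needs $|m|$ large; your argument via Parseval ($\sum_n|I_{n,m}|^2=1$), the derivative identity ($\sum_n n^2|I_{n,m}|^2=m^2\|\phi'\|_{L^2}^2$) and Cauchy--Schwarz is shorter, gives the explicit constant $\|\phi'\|_{L^2}$ (adjust it slightly if you want the strict inequality as stated), and holds for all $m\neq0$ with no largeness assumption. What the paper's approach buys is the sector-uniform machinery inherited from Segal's proposition (which is also what its part (2) splitting exploits); what yours buys is economy — a textbook stationary-phase bookkeeping for (1) and a two-line $L^2$ argument for (2) — with all constants visibly depending only on finitely many derivatives of the fixed $\phi$.
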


\begin{proof}
Let
\[
m_{\phi'}=\min\{\phi'(\theta)|\theta\in S^1\};
\hspace{.1in}\mbox{and}\hspace{.1in}
M_{\phi'}=\max\{\phi'(\theta)|\theta\in S^1\}.
\]
Since $\phi$ is a diffeomorphism, we have $0<m_{\phi'}<M_{\phi'}<\infty$.

Take four points $a,b,c,d$ on the unit circle such that
$a$ corresponds to $m_{\phi'}$ in the sense $\tan(a)=m_{\phi'}$,
$b$ corresponds to $M_{\phi'}$ in the sense $\tan(b)=M_{\phi'}$,
$c$ is opposite to $a$, i.e., $c=a+\pi$,
$d$ is opposite to $b$, i.e., $d=b+\pi$.
The four points on the circle are arranged in the counter-clockwise order,
and $0<a<b<\frac{\pi}{2}$, $\pi<c<d<\frac{3}{2}\pi$.

Let $\tau\in S^1$ such that $\tau\neq\frac{\pi}{4},\frac{5}{4}\pi$.
Define a function $\phi_\tau$ on $S^1$ by
\[
\phi_\tau(\theta)=
\frac{\cos\tau\cdot\phi(\theta)-\sin\tau\cdot\theta}
{\cos\tau-\sin\tau}.
\]

We will show that if $\tau\in (b,c)$ or $\tau\in(d,a)$,
then $\phi_\tau$ is an orientation preserving diffeomorphism of $S^1$,
where $(b,c)$ is the open arc from the point $b$ to the point $c$,
and $(d,a)$ is the open arc from the point $d$ to the point $a$.

Clearly $\phi_\tau$ is a $C^\infty$ function on $S^1$.
Also, $\phi_\tau(0)=0$ and $\phi_\tau(2\pi)=2\pi$.
Taking derivative with respect to $\theta$, we have
\[
\phi'_\tau(\theta)=
\frac{\cos\tau\cdot\phi'(\theta)-\sin\tau}
{\cos\tau-\sin\tau}.
\]
By the choice of $\tau$, we can prove that $\phi'_\tau(\theta)>0$.
Therefore, $\phi_\tau$ is an orientation preserving diffeomorphism as claimed.

Let $m,n\in\mathbb{Z}\backslash\{0\}$.
Let $\tau_{mn}=\Arg(m+in)$, i.e., the argument of the complex number
$m+in$, considered to be in $[0,2\pi]$.
Then we have $m\phi-n\theta=(m-n)\phi_{\tau_{mn}}$.

If $\tau_{mn}\in(b,c)$, then $\phi_{\tau_{mn}}$ is a diffeomorphism.
Let $\psi_{\tau_{mn}}=\phi_{\tau_{mn}}^{-1}$. Then
\[
I_{n,m}
=\frac{1}{2\pi}\int_0^{2\pi}e^{i(m-n)\phi_{\tau_{mn}}}d\theta
=\frac{1}{2\pi}\int_0^{2\pi}e^{i(m-n)\theta}
\psi_{\tau_{mn}}'(\theta)d\theta,
\]
where the last equality is by change of variable.
On integration by parts $k$ times, we have
\[
I_{n,m}
=\left(\frac{1}{i(m-n)}\right)^k\frac{1}{2\pi}\int_0^{2\pi}
e^{i(m-n)\theta}\psi_{\tau_{mn}}^{(k+1)}(\theta)d\theta.
\]

Let $\alpha=[\alpha_0,\alpha_1]$ be a closed arc contained in the arc $(b,c)$.
Let $S_\alpha$ be the set of all pairs of nonzero integers $(m,n)$ such that
$\alpha_0<\tau_{mn}<\alpha_1$, where $\tau_{mn}=\Arg(m+in)$.
We are going to consider an upper bound of the sum
$\sum_{(m,n)\in S_\alpha}|n||I_{n,m}|^2$.

For the pair $(m,n)$, if $|m-n|=p$, the condition
$\alpha_0<\tau_{mn}<\alpha_1$ gives us both an upper bound and a lower
bound for $n$:
\[
\frac{m_{\phi'}}{m_{\phi'}-1}p
\le n \le
\frac{M_{\phi'}}{M_{\phi'}-1}p.
\]
So $|n|\le C_1p$ where $C_1$ is a constant which does not depend on
the pair $(m,n)$. Also, the number of pairs $(m,n)\in S_\alpha$ such
that $|m-n|=p$ is bounded by $C_2p$ for some constant $C_2$. Let
$C_3=\max \Big\{ |\psi_\tau^{(k+1)}(\theta)| : \theta\in S^1,
\tau\in[\alpha_0,\alpha_1] \Big\}$. Then
\[
|I_{n,m}|\le
C_3\Big|\frac{1}{i(m-n)}\Big|^k\frac{1}{2\pi}\int_0^{2\pi}
e^{i(m-n)\theta}d\theta =C_3 p^{-k}.
\]
Therefore,
\begin{align*}
\sum_{(m,n)\in S}|n||I_{n,m}|^2
&=\sum_p\sum_{(m,n)\in S_\alpha; |m-n|=p}|n||I_{n,m}|^2
\\&
\le\sum_p C_1p\cdot C_3^2p^{-2k}\cdot C_2p =C_\alpha  \sum_p
p^{-(2k-2)},
\end{align*}
where the constant $C_\alpha$ depends on the arc $\alpha$.

Similarly, for a closed arc $\beta=[\beta_0,\beta_1]$ contained in
the arc $(d,a)$, we have
\[
\sum_{(m,n)\in S_\beta}|n||I_{n,m}|^2 \le C_\beta \sum_p
p^{-(2k-2)},
\]
where the constant $C_\beta$ depends on the arc $\beta$.

Now let $\alpha=[\frac{\pi}{2},\pi]$, and $\beta=[\frac{3}{2}\pi,2\pi]$.
Then $\alpha$ is contained in $(b,c)$ and $\beta$ is contained in $(d,a)$.
We have
\[
\sum_{n>0,m<0}|n||I_{n,m}|^2
=C_\alpha \cdot \sum_p p^{-(2k-2)}
<\infty
\]
and
\[
\sum_{n<0,m>0}|n||I_{n,m}|^2 =C_\beta \cdot \sum_p p^{-(2k-2)}
<\infty,
\]
which proves (1) of the lemma.

To prove (2), we let $\alpha=[\alpha_0,\alpha_1]$ be a closed arc
contained in the arc $(b,c)$ such that $b<\alpha_0<\frac{\pi}{2}$
and $\pi<\alpha_1<c$, and $\beta=[\beta_0,\beta_1]$ be a closed arc
contained in the arc $(d,a)$ such that $d<\beta_0<\frac{3}{2}\pi$
and $0<\beta_1<a$. Then we have
\[
\sum_{(m,n)\in S_\alpha}|n||I_{n,m}|^2 +\sum_{(m,n)\in
S_\beta}|n||I_{n,m}|^2 \leqslant  C_{\alpha\beta}
\]
for some constant $C_{\alpha\beta}$.

Let $m>0$ be sufficiently large, and $N_m$ be the largest integer
less than or equal to $m\tan(\alpha_0)$,
\[
\sum_{0<n\leqslant N_m}|I_{n,m}|^2 \leqslant
\sum_{n\neq0}|I_{n,m}|^2.
\]

Note that $I_{n,m}$ is the $n$th Fourier coefficient of
$\psi.e^{im\theta}$. Therefore,
\[
\sum_{n\neq0}|I_{n,m}|^2
=\|\psi.e^{im\theta}\|_{L^2}
\]
which is bounded by a constant $K$.
Therefore,
\[
\sum_{0<n\leqslant N_m}|n||I_{n,m}|^2 \leqslant
Km\tan\left(\alpha_0\right).
\]
On the other hand,
\[
\sum_{n<0}|n||I_{n,m}|^2 + \sum_{n>N_m}|n||I_{n,m}|^2 \leqslant
\sum_{(m,n)\in S_\alpha}|n||I_{n,m}|^2 + \sum_{(m,n)\in
S_\beta}|n||I_{n,m}|^2 =C_{\alpha\beta}.
\]
Therefore,
\[
\sum_{n\neq 0}|n||I_{n,m}|^2 \leqslant C_{\alpha\beta} +
Km\tan(\alpha_0) \leqslant m C_+,
\]
where $C_+$ can be chosen to be, for example,
$K\tan(\alpha_0)+C_{\alpha\beta}$, which is independent of $m$.

Similarly, for $m<0$ with sufficiently large $|m|$
\[
\sum_{n\neq 0}|n||I_{n,m}|^2 \leqslant m C_-.
\]

Let $C=\max \{C_+,C_-\}$. Then we have, for sufficiently
large $|m|$,
\[
\sum_{n\neq 0}|n||I_{n,m}|^2 \leqslant |m|C,
\]
which proves (2) of the lemma.
\end{proof}

\begin{lemma}\label{l.4.6}
For any $\psi\in\Diff(S^1)$, $\Phi(\psi)\in B(H)$, the space of
bounded linear maps on $H$. Moreover,
\[
\Vert\Phi(\psi)\Vert \leqslant C, \
\Vert\Phi(\psi)\Vert_{2}\leqslant C, \] where $C$ is the constant in
Equation \ref{e.4.1}.
\end{lemma}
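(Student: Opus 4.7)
The plan is to compute the matrix of $\Phi(\psi)$ in the orthonormal basis $\mathcal{B}_\omega$ and then read off both norms from sums involving the integrals $I_{n,m}$ of Lemma \ref{lem.BigLemma}. Using $\Phi(\psi)u=u\circ\phi$ modulo a constant, the Fourier description $(u,v)_\omega=\sum_{k\neq 0}|k|\hat u(k)\overline{\hat v(k)}$ of the inner product, and the explicit form of $\tilde e_n$ from Definition \ref{def.OmegaBasis}, a direct calculation gives
\[
A_{n,m}:=(\Phi(\psi)\tilde e_m,\tilde e_n)_\omega = \varepsilon_{n,m}\sqrt{|n|/|m|}\,I_{n,m},
\]
where $\varepsilon_{n,m}$ is a unimodular factor depending on $\sgn(n)$ and $\sgn(m)$. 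In particular $|A_{n,m}|^2=(|n|/|m|)|I_{n,m}|^2$, and this identity is the bridge between the two parts of Lemma \ref{lem.BigLemma} and the two norms to be bounded.

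For the $2$-norm I would invoke Definition \ref{def.2Norm}: $\|\Phi(\psi)\|_2^2$ equals the Hilbert--Schmidt norm squared of the block $b=\pi^+\Phi(\psi)\pi^-$, consisting of entries with $n>0,\,m<0$. Since $|m|\ge 1$,
\[
\|b\|_{HS}^2 = \sum_{n>0,\,m<0}\frac{n}{|m|}|I_{n,m}|^2 \le \sum_{n>0,\,m<0} n\,|I_{n,m}|^2,
\]
and the right-hand side is finite by part (1) of Lemma \ref{lem.BigLemma}, giving a constant that can be absorbed into $C$.

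For the operator norm I would use the Fourier identity $\widehat{\Phi(\psi)u}(n)=\sum_m I_{n,m}\hat u(m)$ to write
\[
\|\Phi(\psi)u\|_\omega^2 = \sum_{n\neq 0}|n|\Bigl|\sum_{m\neq 0} I_{n,m}\hat u(m)\Bigr|^2,
\]
and then split the double sum according to the signs of $n$ and $m$. The cross-sign blocks $(\sgn n\neq\sgn m)$ are Hilbert--Schmidt by part (1) of Lemma \ref{lem.BigLemma}, so the corresponding contribution is bounded by $\|b\|_{HS}^2\,\|u\|_\omega^2$ after a Cauchy--Schwarz with weights $|m|^{-1/2}$. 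For each diagonal block $(\sgn n=\sgn m)$ I would apply the same Cauchy--Schwarz and then use Lemma \ref{lem.BigLemma}(2), namely $\sum_{n\neq 0}|n||I_{n,m}|^2\le C|m|$ for $|m|$ large (with the finitely many small $|m|$ absorbed into the constant), which provides a uniform column $\ell^2$-bound for the matrix $A$ and yields a Schur-type estimate for the diagonal blocks.

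The main obstacle is the operator norm bound for the diagonal blocks: a naive single application of Cauchy--Schwarz turns the inner sum into $\sum_m\frac{1}{|m|}\sum_n|n||I_{n,m}|^2$, and part (2) of Lemma \ref{lem.BigLemma} only makes each term $O(1)$, so the sum diverges. One must therefore treat the diagonal blocks by a Schur test rather than by taking Hilbert--Schmidt norms, using part (2) to control the column norms uniformly while relying on the additional Hilbert--Schmidt decay of the off-diagonal blocks from part (1). Once that is done, both $\|\Phi(\psi)\|$ and $\|\Phi(\psi)\|_2$ are controlled by the same constant $C$ arising from \eqref{e.4.1}.
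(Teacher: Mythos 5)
Your computation of the matrix entries, $|(\Phi(\psi)\tilde e_m,\tilde e_n)_\omega|^2=\tfrac{|n|}{|m|}\,|I_{n,m}|^2$, and the resulting bound $\|\Phi(\psi)\|_2^2\le\sum_{n>0,m<0}|n|\,|I_{n,m}|^2<\infty$ via part (1) of Lemma \ref{lem.BigLemma} are correct, and this is essentially how the paper obtains the Hilbert--Schmidt half of the statement.

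The operator-norm half, however, has a genuine gap: the estimate for the same-sign (diagonal) blocks is asserted, not proved. All that part (2) of Lemma \ref{lem.BigLemma} gives you is, for each fixed column $m$ with $|m|$ large, $\sum_{n\neq0}|A_{n,m}|^2=\tfrac{1}{|m|}\sum_{n\neq0}|n|\,|I_{n,m}|^2\le C$, i.e.\ a uniform $\ell^2$ bound on columns. A uniform column $\ell^2$ bound does not bound the operator norm (a matrix whose single nonzero row has entries of modulus $1$ in every column has all column norms equal to $1$ and is unbounded), and a Schur test requires, in addition, uniform control of weighted row sums $\sum_m w_m|A_{n,m}|$ for fixed $n$; Lemma \ref{lem.BigLemma} contains no estimate in which $n$ is fixed and $m$ is summed, and your proposal does not derive one, so the ``Schur-type estimate'' cannot be run as stated. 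For comparison, the paper stays with the quadratic form: it expands $\|\psi.u\|_\omega^2=\sum_n|n|\bigl|\sum_m\hat u(m)I_{n,m}\bigr|^2$, dominates the squared inner sum termwise by $\sum_m|\hat u(m)|^2|I_{n,m}|^2$, interchanges the sums, splits the $m$-sum at a finite threshold $M_0$, and applies \eqref{e.4.1} so that the growth $C|m|$ is absorbed by the normalization $\sum_m|m|\,|\hat u(m)|^2=\|u\|_\omega^2=1$; the $H^{1/2}$ weight $|m|$ carried by $u$ is what compensates the linear growth in \eqref{e.4.1}. Your observation that $\sum_m\tfrac{1}{|m|}\sum_n|n|\,|I_{n,m}|^2$ diverges is a correct diagnosis that a plain (even weighted) Cauchy--Schwarz in $m$ cannot by itself close the argument, but replacing it by a Schur test whose row input is unavailable does not close it either. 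To complete your route you would need either a row analogue of part (2) --- for instance by applying Lemma \ref{lem.BigLemma} to $\psi^{-1}$ and relating the two kernels, which requires an extra argument because the change of variables introduces a Jacobian factor --- or explicit Schur weights together with proofs of the corresponding row estimates; as written, the crucial diagonal-block bound, which is the heart of the lemma, is missing.
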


\begin{proof}
First observe that the operator norm of $\Phi(\psi)$ is
\[
\|\Phi(\psi)\|=\sup\{\|\psi.u\|_\omega \hspace{.2cm}|\hspace{.2cm}
u\in H,\|u\|_\omega=1\}.
\]
For any $u\in H$, let $\hat{u}$ be its Fourier coefficients, that is
$\hat{u}(n)=(u,\hat{e}_n)$, and let $\tilde{u}$ be defined by
$\tilde{u}=(u,\tilde{e}_n)_\omega$
(\ref{nota.Inner},\ref{def.OmegaBasis}). It can be verified that the
relation between $\hat{u}$ and $\tilde{u}$ is: if $n>0$, then
$\tilde{u}(n)=\sqrt{n}\hat{u}(n)$; if $n<0$, then
$\tilde{u}(n)=i\sqrt{|n|}\hat{u}(n)$. We have
\[
\|u\|_\omega^2
=(u,u)_\omega
=(\tilde{u},\tilde{u})_{l^2}
=\sum_{n\neq0}|\tilde{u}(n)|^2
=\sum_{n\neq0}|n||\hat{u}(n)|^2.
\]
Let $\phi=\psi^{-1}$. We have $u(\phi)=\sum_{m\neq0}\hat{u}(m)e^{im\phi}$.
Using the notation $I_{n,m}$ (\ref{lem.BigLemma}), we have
\begin{align*}
& \|\psi.u\|_\omega^2 =\sum_{n\neq0}|n||\widehat{\psi.u}(n)|^2
=\sum_{n\neq0}|n|\Big| \frac{1}{2\pi}\int_0^{2\pi}
u(\phi(\theta))e^{-in\theta}d\theta \Big|^2
\\
&
=\sum_{n\neq0}|n|\Big| \frac{1}{2\pi}\int_0^{2\pi}
\sum_{m\neq0}\hat{u}(m)e^{im\phi} e^{-in\theta}d\theta \Big|^2
\\&
=\sum_{n\neq0}|n|\Big|\sum_{m\neq0}\hat{u}(m)
\frac{1}{2\pi}\int_0^{2\pi}e^{im\phi-in\theta}d\theta \Big|^2
\\&
=\sum_{n\neq0}|n|\Big|\sum_{m\neq0}\hat{u}(m) I_{n,m}\Big|^2 \\
& \leqslant \sum_{m,n\neq0}|n||\hat{u}(m)|^2|I_{n,m}|^2
=\sum_{m\neq0}|\hat{u}(m)|^2 \sum_{n\neq0}|n||I_{n,m}|^2
\\&
=\sum_{|m|\leqslant M_0}|\hat{u}(m)|^2 \sum_{n\neq0}|n||I_{n,m}|^2
+\sum_{|m|>M_0}|\hat{u}(m)|^2 \sum_{n\neq0}|n||I_{n,m}|^2,
\end{align*}
where the constant $M_0$ in the last equality is a positive integer
large enough so that we can apply part (2) of Lemma
\ref{lem.BigLemma}. It is easy to see that the first term in the
last equality is finite. For the second term we use Lemma
\ref{lem.BigLemma}
\[
\sum_{|m|>M_0}|\hat{u}(m)|^2 \sum_{n\neq0}|n||I_{n,m}|^2 \leqslant
C\sum_{|m|>M_0}|\hat{u}(m)|^2 |m| \leqslant C.
\]
Thus  for any  $u\in H$ with $\|u\|_\omega=1$, $\|\psi.u\|_\omega$
is uniformly bounded. Therefore, $\Phi(\psi)$ is a bounded operator
on $H$.

Now we can use Lemma \ref{lem.BigLemma} again to estimate the norm
$\Vert \Phi(\psi) \Vert_{2}$

\begin{align*}
& \|\Phi(\psi)\|_{2}
=\sum_{n>0,m<0}|(\psi.\tilde{e}_m,\tilde{e}_n)_\omega|^2
=\sum_{n>0,m<0}|n||(\psi.\hat{e}_m,\hat{e}_n)|^2
\\&
=\sum_{n>0,m<0}|n||I_{n,m}|^2 <\infty.
\end{align*}
\end{proof}

\begin{theorem}\label{t.4.6}
$\Phi:\Diff(S^1)\to \Sp(\infty)$ is a group homomorphism. Moreover,
$\Phi$ is injective, but not surjective.
\end{theorem}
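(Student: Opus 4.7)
The plan is to verify the three assertions of the theorem in turn: (a) $\Phi(\psi)$ actually lands in $\Sp(\infty)$, so the group homomorphism $\Diff(S^1)\to\Aut H$ already established factors through $\Sp(\infty)$; (b) $\Phi$ is injective; (c) $\Phi$ is not surjective.

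For (a), I would check each condition in Remark \ref{def.SPInf1}. Invertibility is immediate from the homomorphism property already proved, since $\Phi(\psi)\Phi(\psi^{-1})=\Phi(\mathrm{id})=\mathrm{id}$. The identity $\Phi(\psi)=\overline{\Phi(\psi)}$ is the remark immediately following Definition \ref{d.4.1}: real-valued $u\in H$ are sent to real-valued functions. Preservation of $\omega$ is Lemma \ref{l.4.4}, and both the operator-norm bound and the bound on $\Vert\Phi(\psi)\Vert_2$ are Lemma \ref{l.4.6}. Finally, $\Phi(\psi)(H)\subseteq H$ is clear since the composition of $C^\infty$ maps is $C^\infty$ and the explicit subtraction of the mean value keeps us in $H$. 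This step is routine bookkeeping given what we have already proved.

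For (b), suppose $\Phi(\psi)=\mathrm{id}$ and apply it to $\hat{e}_1=e^{i\theta}\in H$, obtaining
\[
e^{i\psi^{-1}(\theta)}-c = e^{i\theta}, \qquad c=\tfrac{1}{2\pi}\int_0^{2\pi}e^{i\psi^{-1}(\theta)}\,d\theta.
\]
Taking absolute values forces $|e^{i\theta}+c|=1$ for every $\theta$, which is only possible if $c=0$. Hence $\psi^{-1}(\theta)\equiv\theta\pmod{2\pi}$, so $\psi=\mathrm{id}$.

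For (c), I would exhibit an explicit element of $\Sp(\infty)$ not in the image. Choose $\alpha,\beta\in\mathbb{R}$ with $e^{-2i\alpha}\neq e^{-2i\beta}$, and define $A$ to be the diagonal operator in the basis $\mathcal{B}_\omega$ with eigenvalues $\lambda_{\pm 1}=e^{\mp i\alpha}$, $\lambda_{\pm 2}=e^{\mp 2i\beta}$, and $\lambda_n=1$ for $|n|\geq 3$. Since $\lambda_{-n}=\overline{\lambda_n}$ one has $A=\bar A$ by Proposition \ref{SomeFacts1}, and since $\lambda_n\lambda_{-n}=1$ for every $n$ the characterization in Proposition \ref{prop.Preserve} (via the formula $\omega(\tilde e_m,\tilde e_n)=-\sgn(m)\delta_{m,-n}$) gives that $A$ preserves $\omega$; invertibility is obvious from the unimodular entries, $\|A\|_2=0$ since $A$ is diagonal, and $A(H)\subseteq H$ because $A$ multiplies each Fourier coefficient by a number of modulus $1$. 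Thus $A\in\Sp(\infty)$. If $A=\Phi(\psi)$, then the action on $\hat e_1$ would force $\psi$ to be the rotation by $\alpha$ exactly as in the injectivity argument in (b); but then $\Phi(\psi)\tilde e_2=e^{-2i\alpha}\tilde e_2\neq e^{-2i\beta}\tilde e_2=A\tilde e_2$, a contradiction.

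The main obstacle, such as it is, lies in (c): one must carefully track the conjugation and indexing conventions in Proposition \ref{SomeFacts1} to confirm that such a simple diagonal operator genuinely lies in $\Sp(\infty)$. Steps (a) and (b) are essentially consolidations of work already done in the preceding lemmas.
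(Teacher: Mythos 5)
Your proposal is correct, and parts (a) and (b) run along essentially the same lines as the paper: (a) is the same assembly of Lemma \ref{l.4.4}, Lemma \ref{l.4.6} and the reality remark after Definition \ref{d.4.1}, and (b) is the same circle-geometry argument (the paper compares two arbitrary diffeomorphisms and argues the two mean-value constants must agree because both images are unit circles, while you reduce to the kernel and observe that $|e^{i\theta}+c|=1$ for all $\theta$ forces $c=0$; averaging $|e^{i\theta}+c|^2$ over $\theta$ makes this instantaneous). Where you genuinely diverge is non-surjectivity. The paper takes a non-diagonal $A$ mixing $\tilde e_1$ and $\tilde e_{-1}$ (entries $\sqrt2$ and $\pm i$), and excludes it from the image by a purely geometric observation: for any $\psi$, the function $\Phi(\psi)e^{i\theta}$ has image a translated circle of radius $1$, whereas $A\tilde e_1=\sqrt2 e^{i\theta}-e^{-i\theta}$ traces an ellipse; no candidate $\psi$ ever needs to be identified. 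You instead take a diagonal operator with unimodular eigenvalues $e^{\mp i\alpha}$, $e^{\mp 2i\beta}$ and $1$, for which membership in $\Sp(\infty)$ is essentially free ($A=\bar A$ from $\lambda_{-n}=\overline{\lambda_n}$, $\omega$-preservation from $\lambda_m\lambda_{-m}=1$, $\|A\|_2=0$, obvious invertibility and invariance of $H$), and then exploit the rigidity already used in (b): the action on $\hat e_1$ pins $\psi$ down to the rotation by $\alpha$, after which the action on the second mode gives $e^{-2i\alpha}\neq e^{-2i\beta}$, a contradiction. Your route buys a trivial verification that the counterexample lies in $\Sp(\infty)$ and shows the image misses even diagonal symplectic maps whose phases on different Fourier modes are incompatible with a single rotation; the paper's route buys a one-line exclusion argument (circle versus ellipse) that never has to reconstruct $\psi$, and its counterexample has a nonzero off-diagonal block, so it also illustrates that genuinely "mode-mixing" elements of $\Sp(\infty)$ need not come from diffeomorphisms. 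Both arguments are complete.
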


\begin{proof}
Combining Lemma \ref{l.4.4} and Lemma \ref{l.4.6} we see that for
any diffeomorphism $\psi\in\Diff(S^1)$ the map $\Phi(\psi)$ is an
invertible bounded operator on $H$, it preserves the form $\omega$,
and $\|\Phi(\psi)\|_{2}<\infty$. In addition, by our remark after
Definition \ref{d.4.1} $\psi.u$ is real-valued, if $u$ is
real-valued. Therefore, $\Phi$ maps $\Diff(S^1)$ into $\Sp(\infty)$.

Next, we first prove that $\Phi$ is injective. Let
$\psi_1,\psi_2\in\Diff(S^1)$, and denote $\phi_1=\psi_1^{-1},
\phi_2=\psi_2^{-1}$. Suppose $\Phi(\psi_1)=\Phi(\psi_2)$, i.e.
$\psi_1.u=\psi_2.u$, for any $u\in H$. In particular,
$\psi_1.e^{i\theta}=\psi_2.e^{i\theta}$. Therefore
\[
e^{i\phi_1}-C_1=e^{i\phi_2}-C_2,
\]
where $C_1=\frac{1}{2\pi}\int_0^{2\pi}e^{i\phi_1}d\theta$, and
$C_2=\frac{1}{2\pi}\int_0^{2\pi}e^{i\phi_2}d\theta$. Note that
$e^{i\phi_1}$ and $e^{i\phi_2}$ have the same image as maps from
$S^{1}$ to $\mathbb{C}$. This implies $C_1=C_2$, since otherwise
$e^{i\phi_1}=e^{i\phi_2}+(C_1-C_2)$ and $e^{i\phi_1}$ and
$e^{i\phi_2}$ would have had different images. Therefore, we have
$e^{i\phi_1}=e^{i\phi_2}$. But the function $e^{i\tau}:S^1\to S^1$
is an injective function, so $\phi_1=\phi_2$. Therefore
$\psi_1=\psi_2$, and so $\Phi$ is injective.

To prove that $\Phi$ is not surjective, we will construct an
operator $A\in \Sp(\infty)$ which can not be written as $\Phi(\psi)$
for any $\psi\in\Diff(S^1)$. Let the linear map $A$ be defined by
the corresponding matrix $\{ A_{m, n}\}_{m, n \in \mathbb{Z}}$ with
the entries
\begin{align*}
& A_{1,1}=A_{-1,-1}=\sqrt{2}\\
& A_{1,-1}=i, A_{-1,1}=-i\\
& A_{m,m}=1, \hspace{.1in}\mbox{for }m \neq \pm1
\end{align*}
with all other entries  being $0$.

First we show that $A\in \Sp(\infty)$. For any $u\in H$, we can
write $u=\sum_{n\neq0}\tilde{u}(n)\tilde{e}_n$. Then $A$ acting on
$u$ changes  only $\tilde{e}_1$ and $\tilde{e}_{-1}$ . Therefore,
$Au\in H$, and clearly $A$ is a well--defined  bounded linear map on
$H$ to $H$. Moreover, $\|A\|_{2}<\infty$.  It is clear that
$A_{m,n}=\overline{A_{-m,-n}}$, and therefore $A=\bar{A}$ by
Proposition \ref{SomeFacts1}. Moreover, $A$ preserves the form
$\omega$ by  part(II) of Proposition \ref{prop.Preserve}, as

\[\sum_{k\neq 0}\sgn(mk)A_{k,m}\overline{A_{k,n}}
=\delta_{m,n}.
\]
Finally, $A$ is invertible, since $\{A_{k, m}\}_{m, n \in
\mathbb{Z}}$ is, with the inverse $\{B_{k, m}\}_{m, n \in
\mathbb{Z}}$ given by
\begin{align*}
& B_{1,1}=B_{-1,-1}=\sqrt{2}\\
& B_{1,-1}=-i, B_{-1,1}=i\\
& B_{m,m}=1, \hspace{.1in}\mbox{for }m \neq \pm1
\end{align*}
with all other entries being $0$. Next we show that
$A\not=\Phi(\psi)$ for any $\psi\in\Diff(S^1)$. First observe that
if we look at any basis element $\tilde{e}_1=e^{i\theta}$ as a
function from $S^1$ to $\mathbb{C}$, then the image of this function
lies on the unit circle. Clearly, when acted by a diffeomorphism
$\phi\in\Diff(S^1)$, the image of the function $\phi.e^{i\theta}$ is
still a circle with radius $1$. But if we consider $A\tilde{e}_1$ as
a function from $S^1$ to $\mathbb{C}$, we will show that the image
of the function $A\tilde{e}_1:S^1\to\mathbb{C}$ is not  a circle.
Therefore, $A\not=\Phi(\psi)$ for any $\psi\in\Diff(S^1)$. Indeed,
by definition of $A$ we have
\[
A\tilde{e}_1=\sqrt{2}\tilde{e}_1 - i \tilde{e}_{-1}.
\]
Let us write it as a function on $S^1$
\[
A\tilde{e}_1(\theta)=\sqrt{2}e^{i\theta}-e^{-i\theta}
=(\sqrt{2}-1)\cos\theta+i(\sqrt{2}+1)\sin\theta,
\]
and then we see that the image lies on an ellipse, which is not the
unit circle
\[ \frac{x^2}{(\sqrt{2}-1)^2}+\frac{y^2}{(\sqrt{2}+1)^2}=1. \]
\end{proof}

\section{The Lie algebra associated with $\Diff(S^1)$}

Let
$\diff(S^1)$ be the space of smooth vector fields on $S^1$. Elements
in $\diff(S^1)$ can be identified with smooth functions on $S^1$.
The space $\diff(S^1)$ is a Lie algebra with the following Lie
bracket
\[
[X,Y]=XY'-X'Y, \hskip0.1in X,Y\in\diff(S^1),
\]
where $X'$ and $Y'$ are derivatives with respect to $\theta$.

Let $X\in\diff(S^1)$, and $\rho_t$ be the corresponding flow of
diffeomorphisms. We define an action of $\diff(S^1)$ on $H$ as
follows: for $X\in\diff(S^1)$ and $u\in H$, $X.u$ is a function on
$S^1$ defined by
\[
(X.u)(\theta)=\left.\frac{d}{dt}\right|_{t=0} \left[
(\rho_t.u)(\theta) \right],
\]
where $\rho_t$ acts on $u$ via the representation
$\Phi:\Diff(S^1)\to \Sp(\infty)$.

The next proposition shows that the action is well--defined, and
also gives an explicit formula of $X.u$.

\begin{proposition}\label{prop.diffAction}
Let $X\in\diff(S^1)$.
Then
\[
(X.u)(\theta)
=u'(\theta)(-X(\theta))
-\frac{1}{2\pi}\int_0^{2\pi}u'(\theta)(-X(\theta))d\theta,
\]
that is, $X.u$ is the function $-u'X$ with the $0$th Fourier
coefficient replaced by $0$.
\end{proposition}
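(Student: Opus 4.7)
The plan is to apply the chain rule directly to the definition of $\rho_t.u$ and then interchange differentiation with the mean-value integral. By Definition \ref{d.4.1},
\[
(\rho_t.u)(\theta) = u(\rho_t^{-1}(\theta)) - \frac{1}{2\pi}\int_0^{2\pi} u(\rho_t^{-1}(\theta))\,d\theta,
\]
so once we compute $\frac{d}{dt}\big|_{t=0} u(\rho_t^{-1}(\theta))$ pointwise in $\theta$, the claimed formula falls out, provided we can pull $\frac{d}{dt}\big|_{t=0}$ inside the integral.

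First I would identify $\frac{d}{dt}\big|_{t=0} \rho_t^{-1}(\theta)$. Since $\rho_t$ is the flow of $X$ with $\rho_0 = \mathrm{id}$, differentiating the identity $\rho_t(\rho_t^{-1}(\theta)) = \theta$ in $t$ at $t=0$ yields
\[
X(\theta) + \frac{d}{dt}\bigg|_{t=0}\rho_t^{-1}(\theta) = 0,
\]
so $\frac{d}{dt}\big|_{t=0}\rho_t^{-1}(\theta) = -X(\theta)$. The chain rule then gives
\[
\frac{d}{dt}\bigg|_{t=0} u(\rho_t^{-1}(\theta)) = u'(\theta)\bigl(-X(\theta)\bigr).
\]

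The only remaining point is to justify exchanging $\frac{d}{dt}\big|_{t=0}$ with $\int_0^{2\pi}$ in the second (constant) term. Because $X$ is smooth and $S^1$ is compact, the map $(t,\theta)\mapsto u(\rho_t^{-1}(\theta))$ is $C^1$ jointly on a neighborhood of $\{0\}\times S^1$, so the difference quotients are uniformly bounded; dominated convergence (or the standard differentiation-under-the-integral lemma for $C^1$ parameter families on a compact set) then allows us to move the derivative inside. Combining these two calculations yields
\[
(X.u)(\theta) = u'(\theta)\bigl(-X(\theta)\bigr) - \frac{1}{2\pi}\int_0^{2\pi} u'(\theta)\bigl(-X(\theta)\bigr) d\theta,
\]
which is exactly the claim; subtracting the mean ensures $X.u\in H$, so the action is well-defined. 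The only step that requires any care is the interchange of limit and integral, but compactness of $S^1$ together with smoothness of $X$ and $u$ trivializes it.
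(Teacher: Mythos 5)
Your proposal is correct and takes essentially the same route as the paper: both compute $\left.\frac{d}{dt}\right|_{t=0}u(\rho_t^{-1}(\theta))=-u'(\theta)X(\theta)$ by the chain rule (the paper by noting $\rho_t^{-1}$ is the flow of $-X$, you by implicit differentiation of $\rho_t(\rho_t^{-1}(\theta))=\theta$, which is the same fact) and then differentiate the mean-value term under the integral sign. Your explicit justification of the interchange via smoothness and compactness of $S^1$ is a small addition the paper leaves implicit, but it is not a different argument.
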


\begin{proof}
Let $\rho_t$ be the flow that corresponds to $X$, and $\lambda_t$ be the flow
that corresponds to $-X$. Then $\lambda_t$ is the inverse of $\rho_t$
for all $t$.
\[
(X.u)(\theta) =\left.\frac{d}{dt}\right|_{t=0} \left[
(\rho_t.u)(\theta) \right]
=\left.\frac{d}{dt}\right|_{t=0}\bigg[u(\lambda_t(\theta))
-\frac{1}{2\pi}\int_0^{2\pi}u(\lambda_t(\theta))d\theta\bigg].
\]
Using the chain rule, we have
\[
\left.\frac{d}{dt}\right|_{t=0}u(\lambda_t(\theta))
=u'(\theta)(-\widetilde{X}(\theta)),
\]
and
\[
\left.\frac{d}{dt}\right|_{t=0}\frac{1}{2\pi}
\int_0^{2\pi}u(\lambda_t(\theta))d\theta
=\frac{1}{2\pi}\int_0^{2\pi}u'(\theta)(-X(\theta))d\theta.
\]
\end{proof}

\begin{notation}
We consider $\diff(S^1)$ as a subspace of the space of real-valued $L^2$ functions on $S^1$.
The space of real-valued $L^2$ functions on $S^1$ has an orthonormal basis
\[
\mathcal{B}=\{X_{l}=\cos(m\theta),  Y_{k}=\sin(k\theta), l=0, 1,
..., k=1, 2,...\}
\]
which is contained in $\diff(S^1)$.
\end{notation}
Let us consider how these basis elements act on $H$.

\begin{proposition}
For  any $l=0, 1, ..., k=1, 2,...$ the basis elements $X_l,Y_k$ act
on $H$ as linear maps. In the basis $\mathcal{B}_\omega$ of $H$,
they are represented by infinite dimensional matrices with $(m,n)$th
entries equal to

\begin{align*}
(X_l)_{m,n}&=(X_l.\tilde{e}_n,\tilde{e}_m)_\omega=
s(m,n)\frac{1}{2}\sqrt{|mn|}(\delta_{m-n,l}+\delta_{n-m,l})\\
(Y_k)_{m,n}&=(Y_k.\tilde{e}_n,\tilde{e}_m)_\omega=
s(m,n)(-i)\frac{1}{2}\sqrt{|mn|}(\delta_{m-n,k}-\delta_{n-m,k})
\end{align*} where $m,n\neq0$,
\[
s(m,n)=\left\{
\begin{array}{ll}
-i & m,n>0\\
1  & m>0,n<0\\
1  & m<0,n>0\\
i  & m,n<0.
\end{array}
\right.
\]

\end{proposition}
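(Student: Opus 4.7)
The plan is to apply Proposition \ref{prop.diffAction} directly: for any $X \in \diff(S^1)$ and $u \in H$, the formula $(X.u)(\theta) = -u'(\theta)X(\theta)$ (modulo the mean-value correction) is explicit enough that plugging in $X = X_l$ or $Y_k$ and $u = \tilde{e}_n$ reduces the problem to (i) identifying the Fourier coefficients of elementary trigonometric products and (ii) converting from the Fourier basis $\{\hat{e}_m\}$ to the orthonormal basis $\{\tilde{e}_m\}$ of $\mathbb{H}_\omega$.

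The key preparatory observation is the normalization conversion. Writing $\tilde{e}_n = \sigma_n e^{in\theta}$ with $\sigma_n = 1/\sqrt{n}$ for $n>0$ and $\sigma_n = -i/\sqrt{|n|}$ for $n<0$, a direct check from Notation \ref{nota.Inner} and Definition \ref{def.OmegaBasis} yields, for any $u\in H$ with Fourier coefficients $\hat{u}$,
\[
(u,\tilde{e}_m)_\omega \;=\; \frac{\hat{u}(m)}{\sigma_m},
\]
which is exactly the recipe $\tilde{u}(m)=\sqrt{m}\,\hat{u}(m)$ for $m>0$ and $\tilde{u}(m)=i\sqrt{|m|}\,\hat{u}(m)$ for $m<0$ already used in Lemma \ref{l.4.6}. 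Given this, the matrix entry $(X.\tilde{e}_n,\tilde{e}_m)_\omega$ equals $\sigma_m^{-1}$ times the $m$th Fourier coefficient of $X.\tilde{e}_n$.

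Next I execute the computation for $X_l$. Since $\tilde{e}_n'(\theta)=in\sigma_n e^{in\theta}$ and $\cos(l\theta)=\tfrac12(e^{il\theta}+e^{-il\theta})$, the pointwise product is
\[
-\tilde{e}_n'\cdot X_l \;=\; -\tfrac{in\sigma_n}{2}\bigl(e^{i(n+l)\theta}+e^{i(n-l)\theta}\bigr),
\]
and subtracting the mean only removes a possible constant. For $m \neq 0$ the $\hat{e}_m$-coefficient is therefore $-\tfrac{in\sigma_n}{2}(\delta_{m-n,l}+\delta_{n-m,l})$; note the subtracted mean is automatically accounted for because the ``bad'' $\delta_{n-m,l}$ term only attempts to contribute at $m=0$, which is excluded. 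Dividing by $\sigma_m$ gives
\[
(X_l)_{m,n} \;=\; -\tfrac{in\sigma_n}{2\sigma_m}\,\bigl(\delta_{m-n,l}+\delta_{n-m,l}\bigr).
\]
The final step is to verify, by the four-case analysis on $\sgn(m),\sgn(n)$, the sign identity $-in\,\sigma_n/\sigma_m = s(m,n)\sqrt{|mn|}$; this produces the stated formula for $X_l$.

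For $Y_k$ the reasoning is identical, with $\sin(k\theta)=\tfrac{1}{2i}(e^{ik\theta}-e^{-ik\theta})$ replacing the prefactor $-\tfrac{in\sigma_n}{2}$ by $-\tfrac{n\sigma_n}{2}$ and the sum of deltas by a difference. Since $1/i=-i$, the same sign identity applied to $-n\sigma_n/\sigma_m$ introduces the overall extra factor $-i$ in front of $s(m,n)$. The only real obstacle is disciplined bookkeeping of the normalizations $\sigma_n,\sigma_m$ and the sign function $s(m,n)$ across the four sign sectors; there is no conceptual difficulty once Proposition \ref{prop.diffAction} and the basis-conversion identity above are in hand.
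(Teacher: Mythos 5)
Your proposal is correct and takes essentially the same route as the paper's proof: apply Proposition \ref{prop.diffAction}, expand $\cos(l\theta)$ and $\sin(k\theta)$ into exponentials to read off the Fourier coefficients of $-\tilde{e}_n'X$, and convert between the $\hat{e}_m$ and $\tilde{e}_m$ normalizations, with the sign function $s(m,n)$ emerging from the four-case analysis on $\sgn(m),\sgn(n)$. The only difference is presentational: you package the case analysis into the single identity $-in\sigma_n/\sigma_m=s(m,n)\sqrt{|mn|}$ (which does check out in all four sectors), whereas the paper works out the case $m,n>0$ explicitly and declares the remaining cases similar.
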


\begin{proof}
By Proposition \ref{prop.diffAction} and a simple verification
depending on the signs of $m,n$ we see that
\begin{align*}
X_l.e^{in\theta}&=-ine^{in\theta}\cos(l\theta)
=-\frac{1}{2}in\left[e^{i(n+l)\theta}+e^{i(n-l)\theta}\right]\\
Y_k.e^{in\theta}&=-ine^{in\theta}\sin(k\theta)
=-\frac{1}{2}n\left[e^{i(n+k)\theta}-e^{i(n-k)\theta}\right].
\end{align*}
Indeed, recall that a basis element
$\tilde{e}_n\in\mathcal{B}_\omega$ has the form
\[
\tilde{e}_n=\left\{
\begin{array}{ll}
\frac{1}{\sqrt{n}}e^{in\theta} & n>0\\
\frac{1}{i\sqrt{|n|}}e^{in\theta} & n<0.
\end{array}
\right.
\]
Suppose $m,n>0$
\[
X_l.\tilde{e}_n=\frac{1}{\sqrt{n}}X_l.e^{in\theta}
=-\frac{1}{2}i\sqrt{n}\left[e^{i(n+l)\theta}+e^{i(n-l)\theta}\right],
\]
and
\[
(e^{i(n+l)\theta},\tilde{e}_m)_\omega=\sqrt{m}\delta_{m-n,k};
\hspace{.2in}
(e^{i(n-l)\theta},\tilde{e}_m)_\omega=\sqrt{m}\delta_{n-m,l}.
\]
Therefore,
\[
(X_l)_{m,n}=(X_l.\tilde{e}_n,\tilde{e}_m)_\omega=
(-i)\frac{1}{2}\sqrt{|mn|}(\delta_{m-n,l}+\delta_{n-m,l}).\\
\]
All other cases can be verified similarly.
\end{proof}

\begin{remark}
Recall that $\mathbb{H}_\omega$ is the completion of $H$ under
the metric $(\cdot,\cdot)_\omega$.
The above calculation shows that the trigonometric basis
$X_l,Y_k$ of $\diff(S^1)$ act on $\mathbb{H}_\omega$
as \emph{unbounded} operators. They are densely defined on the subspace
$H\subseteq\mathbb{H}_\omega$.
\end{remark}

\section{Brownian motion on $\Sp(\infty)$}\label{s.6}

\begin{notation}
As in \cite{AirMall2001}, let $\mathfrak{sp}\left(\infty\right)$  be
the set of infinite-dimensional matrices $A$  which can be written
as block matrices of the form
\[
\left(
\begin{array}{ll}
a &b\\
\bar{b} &\bar{a}
\end{array}
\right)
\]
such that $a+a^\dag=0$, $b=b^T$, and $b$ is a Hilbert-Schmidt
operator.
\end{notation}

\begin{remark}
 The set $\mathfrak{sp}\left(\infty\right)$ has a structure
of Lie algebra with the operator commutator as a Lie bracket, and we
associate this Lie algebra with the group $\Sp(\infty)$.
\end{remark}




\begin{proposition}\label{GEntry}
Let $\{A_{m,n}\}_{m, n \in \mathbb{Z}\backslash\{0\}}$ be the matrix
corresponding to an operator $A$. Then any $A\in
\mathfrak{sp}\left(\infty\right)$ satisfies (1)
$A_{m,n}=\overline{A_{-m,-n}}$; (2) $A_{m,n}+\overline{A_{n,m}}=0$,
for $m,n>0$; (3) $A_{m,n}=A_{-n,-m}$, for $m>0,n<0$.

Moreover, $A\in \mathfrak{sp}\left(\infty\right)$ if and only if (1)
$A=\bar{A}$; (2) $\pi^+A\pi^-$ is Hilbert-Schmidt; (3) $A+A^\#=0$.

\end{proposition}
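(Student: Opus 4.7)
The plan is to unwind the block-matrix definition of $\mathfrak{sp}(\infty)$ entry-by-entry for Part 1, and then for Part 2 rewrite the three conditions $a+a^\dag=0$, $b=b^T$, $b\in\HS$ in terms of entry-level conditions and match them against $A=\bar A$, $\pi^+A\pi^-\in\HS$, and $A+A^\#=0$, using the explicit formulas for $\bar A$, $A^T$, and $A^\#$ collected in Proposition \ref{SomeFacts1}.

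For Part 1, I would start by noting that membership in $\mathfrak{sp}(\infty)$ immediately forces the block form with $\bar b$, $\bar a$ in the bottom row, which is equivalent to $A=\bar A$; applying part (2) of Proposition \ref{SomeFacts1} then yields $A_{m,n}=\overline{A_{-m,-n}}$, giving (1). For (2), restrict to $m,n>0$, so $A_{m,n}=a_{m,n}$ and $\overline{A_{n,m}}=\overline{a_{n,m}}=(a^\dag)_{m,n}$; the defining relation $a+a^\dag=0$ then reads $A_{m,n}+\overline{A_{n,m}}=0$. For (3), restrict to $m>0,n<0$, so $A_{m,n}=b_{m,n}$, and by Proposition \ref{SomeFacts1}(4), $(b^T)_{m,n}=b_{-n,-m}=A_{-n,-m}$ (the pair $(-n,-m)$ still lies in the $+-$ block); the condition $b=b^T$ is exactly $A_{m,n}=A_{-n,-m}$.

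For Part 2, the forward direction follows from Part 1 together with Proposition \ref{SomeFacts1}(10): $(A^\#)_{m,n}=\sgn(mn)A_{-n,-m}$. I would verify $A+A^\#=0$ by splitting into the four sign cases for $(m,n)$. In the $++$ block, $(A^\#)_{m,n}=A_{-n,-m}=\overline{A_{n,m}}$ by condition (1), so condition (2) gives $A_{m,n}+(A^\#)_{m,n}=0$. In the $+-$ block, $(A^\#)_{m,n}=-A_{-n,-m}=-A_{m,n}$ by condition (3). The $-+$ and $--$ cases are handled by the same two identities after renaming indices. Hilbert-Schmidtness of $\pi^+A\pi^-=b$ is part of the definition.

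For the converse, assume $A=\bar A$, $\pi^+A\pi^-\in\HS$, and $A+A^\#=0$. The first condition puts $A$ into the requisite block form with $b$ and $\bar b$ etc., and the second is exactly $b\in\HS$. It remains to extract $a+a^\dag=0$ and $b=b^T$ from $A+A^\#=0$. Specializing $(A^\#)_{m,n}=\sgn(mn)A_{-n,-m}$ to $m,n>0$ gives $a_{m,n}+\overline{a_{n,m}}=0$ after applying $A=\bar A$ to rewrite $A_{-n,-m}=\overline{a_{n,m}}$, i.e.\ $a+a^\dag=0$; specializing to $m>0,n<0$ gives $b_{m,n}-b_{-n,-m}=0$, i.e.\ $b=b^T$. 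The $--$ and $-+$ cases give no new information since they are the conjugates of the previous two. The only mild obstacle is bookkeeping the sign of $\sgn(mn)$ and the fact that the index swap $(m,n)\mapsto(-n,-m)$ preserves the block type in the $+-$ and $-+$ cases but flips it in the $++$ and $--$ cases; taking care of this case-split is the bulk of the work, and everything else is a direct translation.
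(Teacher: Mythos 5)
Your proposal is correct and follows essentially the same route as the paper, which simply notes that the first part is a direct entry-by-entry translation of the definition of $\mathfrak{sp}(\infty)$ and that the second part follows from this together with the formula $(A^\#)_{m,n}=\sgn(mn)A_{-n,-m}$ of Proposition \ref{SomeFacts1}; you merely spell out the sign and block-type case analysis that the paper leaves implicit.
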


\begin{proof}
The first part follows directly from definition of
$\mathfrak{sp}\left(\infty\right)$.  Then we can use this fact and
the formula for the matrix entries of $A^\#$ in Proposition
\ref{SomeFacts1} to prove the second part.
\end{proof}


\begin{definition}
Let $HS$ be the space of Hilbert-Schmidt matrices viewed as a real
vector space, and $\SpHS=\mathfrak{sp}\left(\infty\right) \cap HS$.
\end{definition}

The space $HS$ as a real Hilbert space has an orthonormal basis
\[
\mathcal{B}_{HS}=\{e_{mn}^{Re}:m,n\neq0\}\cup\{e_{mn}^{Im}:m,n\neq0\},
\]
where $e_{mn}^{Re}$ is a matrix with $(m,n)$-th entry $1$ all other
entries $0$, and $e_{mn}^{Im}$ is a matrix with $(m,n)$th entry $i$
all other entries $0$.

The space $\SpHS$ is a closed subspace of $HS$, and therefore
 a real Hilbert space. According to the symmetry of the
matrices in $\SpHS$, we define a projection $\pi:HS\to \SpHS$, such
that
\begin{align*}\label{Basis}
\pi(e_{mn}^{Re})
&=\frac{1}{2}\big(e_{mn}^{Re}-e_{nm}^{Re}+e_{-m,-n}^{Re}-e_{-n,-m}^{Re}\big),
&\mbox{if } \sgn(mn)>0\\
\pi(e_{mn}^{Im})
&=\frac{1}{2}\big(e_{mn}^{Im}+e_{nm}^{Im}-e_{-m,-n}^{Im}-e_{-n,-m}^{Im}\big),
&\mbox{if } \sgn(mn)>0\\
\pi(e_{mn}^{Re})
&=\frac{1}{2}\big(e_{mn}^{Re}+e_{-n,-m}^{Re}+e_{-m,-n}^{Re}+e_{n,m}^{Re}\big),
&\mbox{if } \sgn(mn)<0\\
\pi(e_{mn}^{Im})
&=\frac{1}{2}\big(e_{mn}^{Im}+e_{-n,-m}^{Im}-e_{-m,-n}^{Im}-e_{nm}^{Im}\big),
&\mbox{if } \sgn(mn)<0
\end{align*}

\begin{notation}
We choose $\mathcal{B}_{\SpHS}=\pi(\mathcal{B}_{HS})$ to be the
orthonormal basis of $\SpHS$.
\end{notation}

Clearly, if $A\in \SpHS$, then $|A|_{\SpHS}=|A|_{HS}$.

\begin{definition}
Let $W_{t}$ be a Brownian motion on $\SpHS$ which has the mean zero
and covariance $Q$, where $Q$ is assumed to be a positive symmetric
trace class operator on $H$. We further assume that $Q$ is diagonal
in the basis $\mathcal{B}_{\SpHS}$.
\end{definition}

\begin{remark}
$Q$ can also be viewed as a positive function on the set
$\mathcal{B}_{\SpHS}$, and the Brownian motion $W_{t}$ can be
written as
\begin{equation}
W_{t}=\sum_{\xi\in\mathcal{B}_{\SpHS}}\sqrt{Q(\xi)}B_{t}^\xi\xi,
\end{equation}
where $\{B_{t}^\xi\}_{\xi\in\mathcal{B}_{\SpHS}}$ are standard
real-valued mutually independent Brownian motions.
\end{remark}

Our goal now is to construct a Brownian motion on the group
$\Sp(\infty)$ using the Brownian motion $W_{t}$ on $\SpHS$. This is
done by solving the Stratonovich stochastic differential equation
\begin{equation}
\delta X_{t} = X_{t} \delta W_{t}.
\end{equation}
This equation can be written as the following It\^o stochastic
differential equation
\begin{equation}\label{ItoSDE}
dX_{t} = X_{t}dW_{t} + \frac{1}{2}X_{t}Ddt,
\end{equation}
where $D=\Diag(D_m)$ is a diagonal matrix with entries
\begin{equation}\label{DMatrix}
D_m=-\frac{1}{4}\sgn(m)\sum_k\sgn(k)\left[Q_{mk}^{Re}+Q_{mk}^{Im}\right]
\end{equation}
with $Q_{mk}^{Re}=Q(\pi(e_{mk}^{Re}))$ and $Q_{mk}^{Im}=Q(\pi(e_{mk}^{Im}))$.

\begin{notation}
Denote by $\SpQHS=Q^{1/2}(\SpHS)$ which is a subspace of $\SpHS$.
Define an inner product on $\SpQHS$ by $\langle u,v \rangle_{\SpQHS}
=\langle Q^{-1/2}u,Q^{-1/2}v \rangle_{\SpHS}$. Then
$\mathcal{B}_{\SpQHS}=\{\hat{\xi}=Q^{1/2}\xi:\xi\in\mathcal{B}_{\SpHS}\}$
is an orthonormal basis of the Hilbert space $\SpQHS$.
\end{notation}

\begin{notation}\label{L20Space}
Let $L_2^0$ be the space of Hilbert-Schmidt operators from $\SpQHS$
to $\SpHS$ with the norm
\[
|\Phi|_{L_2^0}^2 =\sum_{\hat{\xi}\in\mathcal{B}_{{\SpQHS}}}
|\Phi\hat{\xi}|_{\SpHS}^2
=\sum_{\xi,\zeta\in\mathcal{B}_{\SpHS}}Q(\xi)|\langle \Phi\xi,\zeta
\rangle_{\SpHS}|^2 =\Tr [\Phi Q \Phi^\ast],
\]
where $Q(\xi)$ means $Q$ evaluated at $\xi$ as a positive function
on $\mathcal{B}_{\SpHS}$.
\end{notation}

\begin{lemma}\label{L20Norm}
If $\Psi\in L({\SpHS},{\SpHS})$, a bounded linear operator from
$\SpHS$ to $\SpHS$, then $\Psi$ restricted on $\SpQHS$ is a
Hilbert-Schmidt operator from $\SpQHS$ to $\SpHS$, and
$|\Psi|_{L_2^0}\leqslant \Tr(Q)\|\Psi\|^2$, where $\|\Psi\|$ is the
operator norm of $\Psi$.
\end{lemma}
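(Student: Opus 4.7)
The plan is to proceed directly from the definition of $|\Psi|_{L_2^0}$ in Notation \ref{L20Space}, using only two ingredients: the operator-norm bound $|\Psi u|_{\SpHS} \leqslant \|\Psi\|\,|u|_{\SpHS}$ for $u \in \SpHS$, and the fact that $Q$ is trace class on $\SpHS$. Since $\Psi$ is defined on all of $\SpHS$ and $\SpQHS \subseteq \SpHS$ as a set, the restriction $\Psi|_{\SpQHS}$ makes sense as a map into $\SpHS$, and the only content of the lemma is an estimate of the Hilbert--Schmidt norm of this restriction.

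First I would expand $|\Psi|_{L_2^0}^2$ along the orthonormal basis $\mathcal{B}_{\SpQHS} = \{\hat\xi = Q^{1/2}\xi : \xi \in \mathcal{B}_{\SpHS}\}$:
\[
|\Psi|_{L_2^0}^2 = \sum_{\xi \in \mathcal{B}_{\SpHS}} |\Psi Q^{1/2}\xi|_{\SpHS}^2.
\]
Applying the operator-norm inequality termwise gives $|\Psi Q^{1/2}\xi|_{\SpHS}^2 \leqslant \|\Psi\|^2 \, |Q^{1/2}\xi|_{\SpHS}^2$. Summing over $\xi \in \mathcal{B}_{\SpHS}$ and using the self-adjointness of $Q^{1/2}$ yields
\[
|\Psi|_{L_2^0}^2 \leqslant \|\Psi\|^2 \sum_{\xi \in \mathcal{B}_{\SpHS}} \langle Q\xi, \xi\rangle_{\SpHS} = \|\Psi\|^2 \,\Tr(Q),
\]
which is finite by the trace-class assumption on $Q$. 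In particular $\Psi|_{\SpQHS}$ belongs to $L_2^0$ and the stated bound holds (with the natural reading $|\Psi|_{L_2^0}^2 \leqslant \Tr(Q)\|\Psi\|^2$).

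There is no real obstacle here; the argument is the standard embedding of bounded operators into the Hilbert--Schmidt class against a trace-class Gaussian covariance. The only point worth being careful about is that $\Psi$ is assumed bounded as an operator on $\SpHS$ equipped with its own Hilbert--Schmidt norm (not as an operator on $\SpQHS$), which is exactly what allows us to pull $\|\Psi\|^2$ out of the sum before recognizing the remaining series as $\Tr(Q)$.
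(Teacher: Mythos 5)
Your proposal is correct and is essentially identical to the paper's own proof: both expand $|\Psi|_{L_2^0}^2$ over the orthonormal basis $\mathcal{B}_{\SpQHS}=\{Q^{1/2}\xi\}$, pull out $\|\Psi\|^2$ termwise, and identify $\sum_{\xi}\langle Q\xi,\xi\rangle_{\SpHS}$ as $\Tr(Q)$. No further comment is needed.
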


\begin{proof}
\begin{align*}
|\Psi|_{L_2^0}^2 &=\sum_{\hat{\xi}\in\mathcal{B}_{\SpQHS}}
|\Psi\hat{\xi}|_{\SpHS}^2
\leqslant \|\Psi\|^2 \sum_{\hat{\xi}\in\mathcal{B}_{\SpQHS}}|\hat{\xi}|_{\SpHS}^2\\
&=\|\Psi\|^2 \sum_{\xi\in\mathcal{B}_{\SpHS}}\langle
Q^{1/2}\xi,Q^{1/2}\xi\rangle_{\SpHS} =\|\Psi\|^2
\sum_{\xi\in\mathcal{B}_{\SpHS}}\langle Q\xi,\xi\rangle_{\SpHS}
=\|\Psi\|^2 \Tr(Q)
\end{align*}
\end{proof}

\begin{notation}\label{BF}
Define $B:{\SpHS}\to L_2^0$ by $B(Y)A=(I+Y)A$ for $A\in \SpQHS$, and
$F:{\SpHS}\to {\SpHS}$ by $F(Y)=\frac{1}{2}(I+Y)D$.
\end{notation}
Note that $B$ is well--defined by Lemma \ref{L20Norm}. Also $D\in
{\SpHS}$, and so $F(Y)\in {\SpHS}$ and $F$ is well--defined as well.
\begin{theorem}\label{Main1}
The stochastic differential equation
\begin{align}
&dY_t = B(Y_t)dW_{t} + F(Y_t)dt \label{e.6.1}\\
&Y_0=0 \notag
\end{align}
has a unique solution, up to equivalence,  among the processes satisfying
\[
P\left(\int_0^T |Y_s|_{\SpHS}^2 ds < \infty \right) = 1.
\]
\end{theorem}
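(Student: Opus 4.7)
The plan is to apply the standard existence and uniqueness theorem for It\^o stochastic differential equations with Lipschitz coefficients taking values in a Hilbert space (for instance in the style of Da Prato and Zabczyk's treatment of semilinear stochastic evolution equations, specialized to the case where the unbounded generator is zero). To invoke it I must verify that the two coefficient maps $B\colon \SpHS \to L_2^0$ and $F\colon \SpHS \to \SpHS$ are globally Lipschitz and satisfy a linear growth bound with respect to the Hilbert--Schmidt norm on $\SpHS$.

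For the drift, $F(Y_1)-F(Y_2)=\tfrac{1}{2}(Y_1-Y_2)D$. Since $D\in \SpHS$ is a fixed Hilbert--Schmidt operator, the elementary inequality $|AB|_{HS}\le \|A\|\,|B|_{HS}$, combined with the continuous embedding $\|Y\|\le |Y|_{\SpHS}$, gives
\[
|F(Y_1)-F(Y_2)|_{\SpHS} \le \tfrac{1}{2}|D|_{\SpHS}\,|Y_1-Y_2|_{\SpHS}.
\]
For the diffusion, $(B(Y_1)-B(Y_2))A=(Y_1-Y_2)A$ on $\SpQHS$, so Lemma \ref{L20Norm} applied with $\Psi=Y_1-Y_2\in L(\SpHS,\SpHS)$ yields
\[
|B(Y_1)-B(Y_2)|_{L_2^0}^2 \le \Tr(Q)\,\|Y_1-Y_2\|^2 \le \Tr(Q)\,|Y_1-Y_2|_{\SpHS}^2.
\]
Setting $Y_2=0$ in each estimate immediately furnishes the linear growth bounds
\[
|F(Y)|_{\SpHS}\le \tfrac{1}{2}|D|_{\SpHS}(1+|Y|_{\SpHS}), \qquad |B(Y)|_{L_2^0}\le \sqrt{\Tr(Q)}\,(1+|Y|_{\SpHS}).
\]

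With these in place, the classical Picard iteration scheme in the Banach space $L^2\!\left(\Omega;C([0,T];\SpHS)\right)$---built on the It\^o isometry applied to $L_2^0$-valued integrands and on Gr\"onwall's inequality---produces a unique process $Y_t$ satisfying (\ref{e.6.1}) up to equivalence. A localization argument together with the same Gr\"onwall estimate applied to the difference of two solutions extends uniqueness from the $L^2$ category to the larger class of processes satisfying $P\!\left(\int_0^T|Y_s|_{\SpHS}^2 ds<\infty\right)=1$ specified in the statement.

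The main obstacle is really bookkeeping rather than analysis: one must confirm that the formulas $B(Y)A=(I+Y)A$ and $F(Y)=\tfrac{1}{2}(I+Y)D$ actually land in the declared target spaces $L_2^0$ and $\SpHS$. Lemma \ref{L20Norm} takes care of $B(Y)\in L_2^0$, while $F(Y)\in \SpHS$ requires both the fact, recorded just before the theorem, that $D\in \SpHS$, and an observation that left multiplication by $(I+Y)$ with $Y\in \SpHS$ preserves the symplectic symmetries defining $\mathfrak{sp}(\infty)$ in Proposition \ref{GEntry}; only once this is in place do the Lipschitz and linear growth estimates above legitimately describe maps between the Hilbert spaces involved, and the standard fixed-point machinery becomes applicable.
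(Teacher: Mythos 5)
Your proposal is correct and follows essentially the same route as the paper: you verify exactly the Lipschitz and linear-growth conditions for $B$ and $F$ (your use of Lemma \ref{L20Norm} and the bound $|AD|_{HS}\le\|A\|\,|D|_{HS}$ is just a slightly streamlined version of the paper's basis-by-basis estimates), and then appeal to the standard existence--uniqueness theory for Hilbert-space-valued SDEs with Lipschitz coefficients. The only difference is presentational: the paper simply cites Theorem 7.4 of Da Prato--Zabczyk, covering the stated uniqueness class directly, whereas you sketch the Picard iteration and localization argument that underlies that theorem.
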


\begin{proof}
To prove this theorem we will use Theorem 7.4 from the book by
G.~DaPrato and J.~Zabczyk \cite{DaPratoBook1992} as it has been done
in \cite{Gordina2000a, Gordina2005a}. It is enough to check
\begin{enumerate}
\item[1.]
$B$ is a measurable mapping.
\item[2.]
$|B(Y_1)-B(Y_2)|_{L_2^0} \leqslant C_1 |Y_1-Y_2|_{\SpHS}$ for
$Y_1,Y_2\in {\SpHS}$;
\item[3.]
$|B(Y)|_{L_2^0}^2 \leqslant K_1(1+|Y|_{\SpHS}^2)$ for any $Y\in
{\SpHS}$;
\item[4.]
$F$ is a measurable mapping.
\item[5.]
$|F(Y_1)-F(Y_2)|_{\SpHS} \leqslant C_2 |Y_1-Y_2|_{\SpHS}$ for
$Y_1,Y_2\in {\SpHS}$;
\item[6.]
$|F(Y)|_{\SpHS}^2 \leqslant K_2(1+|Y|_{\SpHS}^2)$ for any $Y\in
{\SpHS}$.
\end{enumerate}
Proof of 1. By the proof of 2, $B$ is a continuous mapping,
therefore it is measurable.

\noindent
Proof of 2.
\begin{align*}
&|B(Y_1)-B(Y_2)|_{L_2^0}^2 =\sum_{\hat{\xi}\in\mathcal{B}_{\SpQHS}}
|(Y_1-Y_2)\hat{\xi}|_{\SpHS}^2
=\sum_{\xi\in\mathcal{B}_{\SpHS}} Q(\xi)|(Y_1-Y_2)\xi|_{\SpHS}^2\\
&\leqslant \sum_{\xi\in\mathcal{B}_{\SpHS}}
Q(\xi)\|\xi\|^2|Y_1-Y_2|_{\SpHS}^2 \leqslant
\max_{\xi\in\mathcal{B}_{\SpHS}}\|\xi\|^2
\left(\sum_{\xi\in\mathcal{B}_{\SpHS}}Q(\xi)\right)|Y_1-Y_2|_{\SpHS}^2
\\
& =\Tr Q\left(\max_{\xi\in\mathcal{B}_{\SpHS}}\|\xi\|^2\right)
|Y_1-Y_2|_{\SpHS}^2 = C_1^2 |Y_1-Y_2|_{\SpHS}^2,
\end{align*}
where $\|\xi\|$ is the operator norm of $\xi$, which is uniformly
bounded for all $\xi\in\mathcal{B}_{\SpHS}$.

\noindent
Proof of 3.
\begin{align*}
|B(Y_1)|_{L_2^0}^2
&=\sum_{\hat{\xi}\in\mathcal{B}_{\SpQHS}}|(I+Y)\hat{\xi}|_{\SpHS}^2
=\sum_{\xi\in\mathcal{B}_{\SpHS}} Q(\xi)|(I+Y)\xi|_{\SpHS}^2\\
&\leqslant |(I+Y)\xi|_{\SpHS}^2 \sum_{\xi\in\mathcal{B}_{\SpHS}}
Q(\xi)\|\xi\|^2 \le(1+|Y|_{\SpHS}^2)\cdot K_1.
\end{align*}

\noindent Proof of 4. By the proof of 5, $F$ is a continuous
mapping, therefore it is measurable.

\noindent
Proof of 5.
\begin{align*}
|F(Y_1)-F(Y_2)|_{\SpHS}=|\frac{1}{2}(Y_1-Y_2)D|_{\SpHS}
\le\|\frac{1}{2}D\||Y_1-Y_2|_{\SpHS}
\end{align*}

\noindent
Proof of 6.
\[
|F(Y)|_{\SpHS}^2=|\frac{1}{2}(I+Y)D|_{\SpHS}^2
\le\|\frac{1}{2}D\|^2|I+Y|_{\SpHS}^2 \leqslant K_2(1+|Y|_{\SpHS}^2).
\]
\end{proof}

\begin{notation}\label{BFSharp}
Let $B^\#:\SpHS \to L_2^0$  be the operator  $B^\#(Y)A=A^\#(I+Y)$,
and $F^\#: \SpHS \to \SpHS$ be the operator
$F^\#(Y)=\frac{1}{2}D^\#(Y+I)$.
\end{notation}

\begin{proposition}\label{YSharp}
If $Y_t$ is the solution to the stochastic differential equation
\begin{align*}
& dX_t = B(X_t)dW_{t} + F(X_t)dt\\
&X_0=0,
\end{align*}
where $B$ and $F$ are defined in Notation \ref{BF}, then $Y_t^\#$ is
the solution to the stochastic differential equation
\begin{align}
& dX_t = B^\#(X_t)dW_{t} + F^\#(X_t)dt \label{e.6.2}\\
&X_0=0, \notag
\end{align}
where $B^\#$ and $F^\#$ are defined in Notation \ref{BFSharp}.
\end{proposition}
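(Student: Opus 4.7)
The plan is to apply the symplectic adjoint operation $(\cdot)^\#$ term by term to the It\^o SDE satisfied by $Y_t$ and identify the result as (\ref{e.6.2}). The three algebraic ingredients that drive the argument are: (i) $(\cdot)^\#$ is $\mathbb{R}$-linear with $(AB)^\# = B^\# A^\#$ (Proposition \ref{SomeFacts1}(6)); (ii) $I^\# = I$, so $(I+Y)^\# = I + Y^\#$; and (iii) by Proposition \ref{GEntry}, every $W\in\SpHS$ satisfies $W + W^\# = 0$, so $W_t^\# = -W_t$ and $(dW_t)^\# = -dW_t$. From the explicit formula $(A^\#)_{m,n} = \sgn(mn)\,A_{-n,-m}$ we see that $(\cdot)^\#$ is an isometry of $HS$ onto itself, hence a bounded real-linear operator on $\SpHS$.

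Next I would apply $(\cdot)^\#$ to both sides of the equation $dY_t = (I+Y_t)\,dW_t + \tfrac{1}{2}(I+Y_t)D\,dt$. For the drift, using $(AB)^\# = B^\# A^\#$ and linearity,
\[
\tfrac{1}{2}\bigl((I+Y_t)D\bigr)^\# = \tfrac{1}{2}D^\#(I+Y_t)^\# = \tfrac{1}{2}D^\#(I+Y_t^\#) = F^\#(Y_t^\#).
\]
For the diffusion, the same rules give
\[
\bigl((I+Y_t)\,dW_t\bigr)^\# = (dW_t)^\#(I+Y_t)^\# = (dW_t)^\#(I+Y_t^\#),
\]
and by the definition $B^\#(Y)A = A^\#(I+Y)$ applied with $A = dW_t$ this equals $B^\#(Y_t^\#)\,dW_t$; using (iii) it is also $-dW_t(I+Y_t^\#)$. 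Combined with $Y_0^\# = 0^\# = 0$, this is precisely the SDE (\ref{e.6.2}) claimed.

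The one step that needs care is the interchange of $(\cdot)^\#$ with the stochastic integral. This is the main (but routine) technical point, and I would handle it in the standard way: since $(\cdot)^\#$ is a bounded real-linear operator on $\SpHS$, it commutes with Bochner integration in $t$, and for the It\^o integral one approximates $B(Y_s) = I + Y_s$ by elementary processes, applies $(\cdot)^\#$ to each Riemann sum
\[
\sum_i (I+Y_{s_i})(W_{s_{i+1}} - W_{s_i}) \ \longmapsto \ \sum_i (W_{s_{i+1}} - W_{s_i})^\#(I+Y_{s_i}^\#),
\]
and passes to the $L^2$ limit using the HS-isometry property of $(\cdot)^\#$. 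Once this commutation is justified, the computation of the previous paragraph shows that $Y_t^\#$ is a solution of (\ref{e.6.2}); the existence and uniqueness of such a solution is an exact analogue of Theorem \ref{Main1}, applied to the coefficients $B^\#$ and $F^\#$, which satisfy the same Lipschitz and linear-growth estimates as $B$ and $F$ since $(\cdot)^\#$ is an isometry and $D^\#$ is as well-behaved as $D$.
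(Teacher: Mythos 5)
Your proposal is correct and follows essentially the same route as the paper, whose entire proof is the observation that $(AB)^\#=B^\#A^\#$ (so applying $\#$ to $dY_t=(I+Y_t)\,dW_t+\tfrac12 (I+Y_t)D\,dt$ yields exactly the coefficients $B^\#$, $F^\#$ evaluated at $Y_t^\#$). Your additional justification that $\#$, being a bounded (indeed HS-isometric) real-linear map, commutes with the It\^o and Bochner integrals is a routine but welcome detail that the paper leaves implicit.
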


\begin{proof}
This follows directly from the property $(AB)^\#=B^\# A^\#$ for any
$A$ and $B$, which can be verified by using part (5) of Proposition
\ref{SomeFacts1}.
\end{proof}

\begin{lemma}\label{Trace}
Let $U$ and $H$ be real Hilbert spaces. Let $\Phi:U\to H$ be a
bounded linear map. Let $G:H\to H$ be a bounded linear map. Then
\[
\Tr_H(G\Phi\Phi^\ast)=\Tr_U(\Phi^\ast G\Phi)
\]
\end{lemma}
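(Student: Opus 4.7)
The plan is to expand both traces in orthonormal bases and interchange the order of summation. Choose an orthonormal basis $\{e_i\}_{i\in I}$ of $H$ and an orthonormal basis $\{f_j\}_{j\in J}$ of $U$. By definition,
\[
\Tr_H(G\Phi\Phi^\ast) = \sum_i \langle G\Phi\Phi^\ast e_i,\, e_i\rangle_H,
\qquad
\Tr_U(\Phi^\ast G\Phi) = \sum_j \langle \Phi^\ast G\Phi f_j,\, f_j\rangle_U.
\]
I would proceed by expanding $\Phi^\ast e_i$ in the basis of $U$ using Parseval, and converting the resulting inner products via the adjoint relation $\langle \Phi^\ast v, u\rangle_U = \langle v, \Phi u\rangle_H$, which in the real-Hilbert-space setting requires no complex-conjugation bookkeeping.

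Concretely, write $\Phi^\ast e_i = \sum_j \langle e_i,\Phi f_j\rangle_H f_j$, substitute into the first trace, and use bilinearity to obtain
\[
\Tr_H(G\Phi\Phi^\ast) = \sum_i \sum_j \langle e_i, \Phi f_j\rangle_H\,\langle G\Phi f_j,\, e_i\rangle_H.
\]
Then I would invoke Fubini to swap the summations and apply Parseval in $H$ to collapse the $i$-sum: for fixed $j$,
\[
\sum_i \langle G\Phi f_j,\, e_i\rangle_H \langle e_i, \Phi f_j\rangle_H = \langle G\Phi f_j,\, \Phi f_j\rangle_H = \langle \Phi^\ast G\Phi f_j,\, f_j\rangle_U.
\]
Summing in $j$ yields $\Tr_U(\Phi^\ast G\Phi)$, completing the chain of equalities.

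The only genuine obstacle is justifying the interchange of sums, which demands absolute convergence of the double series. This is delivered by Cauchy--Schwarz applied over pairs $(i,j)$: the double sum is bounded by $\|\Phi\|_{\mathrm{HS}}\cdot\|G\Phi\|_{\mathrm{HS}}$, which is finite precisely when $\Phi$ is Hilbert--Schmidt (which is the regime in which the lemma is used, since $\Phi$ will be an element of the space $L_2^0$ of Notation \ref{L20Space} and $G$ will be a bounded operator, giving $G\Phi$ Hilbert--Schmidt as well). With this integrability in hand, Fubini legitimizes the interchange, both traces are independent of the choice of orthonormal basis, and the identity follows.
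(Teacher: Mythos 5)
Your proof is correct and takes essentially the same route as the paper's: both expand the traces in orthonormal bases of $H$ and $U$ and identify the two expressions by interchanging the order of summation. The only difference is that you explicitly justify the interchange (Cauchy--Schwarz giving the bound $\|\Phi\|_{\mathrm{HS}}\|G\Phi\|_{\mathrm{HS}}$ in the Hilbert--Schmidt regime where the lemma is applied), a convergence point the paper's coordinate computation passes over in silence.
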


\begin{proof}
\begin{align*}
\Tr_H(G\Phi\Phi^\ast)
&=\sum_{i,j\in H; k\in U} G_{ij}\Phi_{jk}(\Phi^\ast)_{ki}
=\sum_{i,j\in H; k\in U} G_{ij}\Phi_{jk}\Phi_{ik}\\
\Tr_U(\Phi^\ast G\Phi)
&=\sum_{i,j\in H; k\in U}(\Phi^\ast)_{ki}G_{ij}\Phi_{jk}
=\sum_{i,j\in H; k\in U} G_{ij}\Phi_{jk}\Phi_{ik}.
\end{align*}
Therefore $\Tr_H(G\Phi\Phi^\ast)=\Tr_U(\Phi^\ast G\Phi)$.
\end{proof}

\begin{lemma}\label{Sum_xi}
\[
\sum_{\xi\in\mathcal{B}_{\SpHS}}\big(Q^{1/2}\xi\big)\big(Q^{1/2}\xi\big)^\#=-D
\]
\end{lemma}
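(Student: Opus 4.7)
The plan is to first use the defining property of $\SpHS$: by Proposition \ref{GEntry}, every $\xi \in \SpHS$ satisfies $\xi + \xi^\# = 0$, so $\xi^\# = -\xi$. Since $Q$ is diagonal in $\mathcal{B}_{\SpHS}$, we have $Q^{1/2}\xi = \sqrt{Q(\xi)}\,\xi$, and therefore
\[
(Q^{1/2}\xi)(Q^{1/2}\xi)^\# = Q(\xi)\,\xi\xi^\# = -Q(\xi)\,\xi^{2}.
\]
Thus the lemma is equivalent to $\sum_{\xi}Q(\xi)\,\xi^{2} = D$, which I verify entry-by-entry.

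Using part (10) of Proposition \ref{SomeFacts1} together with $\xi^\# = -\xi$ gives $\xi_{kq} = -\sgn(kq)\,\xi_{-q,-k}$, and using $\xi = \bar\xi$ gives $\xi_{-q,-k} = \overline{\xi_{qk}}$. Combining these, the $(p,q)$-entry of $\sum_{\xi}Q(\xi)\xi^{2}$ becomes
\[
-\sum_{k}\sgn(kq)\sum_{\xi}Q(\xi)\,\xi_{pk}\,\overline{\xi_{qk}}.
\]

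The explicit formulas for $\pi$ show that each $\xi \in \mathcal{B}_{\SpHS}$ is supported on the four-element orbit $\{(m,n),(n,m),(-m,-n),(-n,-m)\}$ for some $(m,n)$. For $\xi_{pk}\overline{\xi_{qk}}$ to be nonzero, both $(p,k)$ and $(q,k)$ must belong to the same orbit; an elementary check across the four cases (including the degenerate orbits $m=n$ and $m=-n$, where the orbit collapses) forces $q = p$. Hence $\sum_\xi Q(\xi)\xi^2$ is diagonal.

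For the diagonal entry, one evaluates $|\xi_{pk}|^2$ on the basis elements $\pi(e_{pk}^{\text{Re}})$ and $\pi(e_{pk}^{\text{Im}})$ and obtains $\sum_\xi Q(\xi)|\xi_{pk}|^{2} = \tfrac{1}{4}\bigl(Q_{pk}^{\text{Re}} + Q_{pk}^{\text{Im}}\bigr)$, so
\[
\Big(\sum_\xi Q(\xi)\xi^2\Big)_{pp} = -\sgn(p)\sum_k\sgn(k)\cdot\tfrac{1}{4}\bigl(Q_{pk}^{\text{Re}} + Q_{pk}^{\text{Im}}\bigr) = D_p,
\]
matching the definition (\ref{DMatrix}) of $D$. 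The main obstacle is this last step: one must work through the four sign regimes for $\pi(e_{mn}^{\text{Re/Im}})$ (depending on $\sgn(mn)$) and separately handle the degenerate positions $p = \pm k$, where some $\pi(e_{mn}^{\cdot})$ vanish or require renormalization, ensuring that the coefficient $\tfrac14$ is correct and that the seemingly off-diagonal $q = -p$ branch contributes nothing.
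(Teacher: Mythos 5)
Your proposal is correct and follows essentially the same route as the paper: both hinge on $\xi^{\#}=-\xi$ for $\xi\in\mathcal{B}_{\SpHS}$, the explicit four-family form of the projected basis elements, the fact that each product $\big(Q^{1/2}\xi\big)\big(Q^{1/2}\xi\big)^{\#}$ is diagonal, and a final matching of the summed diagonal entries against the definition \eqref{DMatrix} of $D$. The only difference is organizational: the paper computes each of the four products explicitly as a diagonal matrix via the matrix-unit multiplication rule, whereas you obtain diagonality entrywise from the orbit support together with $\xi_{kq}=-\sgn(kq)\overline{\xi_{qk}}$, which is the same verification in slightly more compact form.
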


\begin{proof}
If $\xi\in\mathcal{B}_{\SpHS}$, then
$\xi\in\mathfrak{sp}\left(\infty\right)$, so $\xi^\#=-\xi$. We will
use the fact that
\[
(e_{ij}^{Re}e_{kl}^{Re})_{pq}=\delta_{ip}\delta_{jk}\delta_{lq}
\]
where $e_{ij}^{Re}$ is the matrix with the $(i,j)$th entry being $1$
and all other entries being zero. Using this fact, we see
\begin{enumerate}
\item
for
$\xi=\frac{1}{2}\big(e_{mn}^{Re}-e_{nm}^{Re}+e_{-m,-n}^{Re}
-e_{-n,-m}^{Re}\big)$ with $\sgn(mn)>0$,
\[
\big(Q^{1/2}\xi\big)\big(Q^{1/2}\xi\big)^\# =-\frac{1}{4}Q_{mn}^{Re}
\left[-e_{mm}^{Re}-e_{nn}^{Re}-e_{-m,-m}^{Re}-e_{-n,-n}^{Re}\right]
\]
\item
for
$\xi=\frac{1}{2}\big(e_{mn}^{Im}+e_{nm}^{Im}-e_{-m,-n}^{Im}
-e_{-n,-m}^{Im}\big)$ with $\sgn(mn)>0$,
\[
\big(Q^{1/2}\xi\big)\big(Q^{1/2}\xi\big)^\# =-\frac{1}{4}Q_{mn}^{Im}
\left[-e_{mm}^{Re}-e_{nn}^{Re}-e_{-m,-m}^{Re}-e_{-n,-n}^{Re}\right]
\]
\item
for
$\xi=\frac{1}{2}\big(e_{mn}^{Re}+e_{-n,-m}^{Re}+e_{-m,-n}^{Re}
+e_{n,m}^{Re}\big)$ with $\sgn(mn)<0$,
\[
\big(Q^{1/2}\xi\big)\big(Q^{1/2}\xi\big)^\# =-\frac{1}{4}Q_{mn}^{Re}
\left[e_{mm}^{Re}+e_{nn}^{Re}+e_{-m,-m}^{Re}+e_{-n,-n}^{Re}\right]
\]
\item
for
$\xi=\frac{1}{2}\big(e_{mn}^{Im}+e_{-n,-m}^{Im}-e_{-m,-n}^{Im}
-e_{nm}^{Im}\big)$ with $\sgn(mn)<0$,
\[
\big(Q^{1/2}\xi\big)\big(Q^{1/2}\xi\big)^\# =-\frac{1}{4}Q_{mn}^{Im}
\left[e_{mm}^{Re}+e_{nn}^{Re}+e_{-m,-m}^{Re}+e_{-n,-n}^{Re}\right].
\]
\end{enumerate}

Each of the above is a diagonal matrix. The lemma can be proved by
looking at the diagonal entries of the sum.
\end{proof}

\begin{theorem}\label{Main2}
Let $Y_t$ be the solution to Equation \ref{e.6.1}. Then $Y_t+I\in
\Sp(\infty)$ for any $t>0$ with probability $1$.
\end{theorem}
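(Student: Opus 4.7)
The plan is to verify the three conditions from Remark \ref{def.SPInf2} characterizing $\Sp(\infty)$ for $X_t := I + Y_t$: reality $\bar{X_t} = X_t$, the norm bound $\|X_t\|_2 < \infty$, and the two $\#$-inverse relations $X_t X_t^\# = X_t^\# X_t = I$. The first two are quick. For reality, $\overline{Y_t}$ satisfies the same SDE as $Y_t$ (because $\overline{dW_t} = dW_t$, as $dW_t$ takes values in $\SpHS \subset \mathfrak{sp}(\infty)$, and $D$ is real-diagonal), so uniqueness in Theorem \ref{Main1} gives $\overline{Y_t} = Y_t$. For the norm bound, $\pi^+ I \pi^- = 0$ yields $\|X_t\|_2 = \|\pi^+ Y_t \pi^-\|_{HS} \le \|Y_t\|_{HS} < \infty$ almost surely.

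The substance of the proof is $X_t X_t^\# = I$; I will then deduce $X_t^\# X_t = I$ by a Fredholm argument. Set $U_t := X_t$ and $V_t := X_t^\#$. Theorem \ref{Main1} and Proposition \ref{YSharp}, combined with $(dW_t)^\# = -dW_t$ (since each $\xi \in \mathcal{B}_{\SpHS} \subset \mathfrak{sp}(\infty)$ satisfies $\xi^\# = -\xi$), give
\begin{align*}
dU_t &= U_t\, dW_t + \tfrac{1}{2} U_t D\, dt, \\
dV_t &= -dW_t\, V_t + \tfrac{1}{2} D^\# V_t\, dt.
\end{align*}
Applying the matrix-valued It\^{o} product formula to $U_t V_t$, the martingale contributions to $(dU_t) V_t$ and $U_t (dV_t)$ are $U_t\, dW_t\, V_t$ and $-U_t\, dW_t\, V_t$, which cancel exactly. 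Using $dW_t = \sum_\xi \sqrt{Q(\xi)}\, dB_t^\xi\, \xi$, the matrix covariation of the diffusion parts is $d[U,V]_t = -U_t \bigl(\sum_\xi Q(\xi)\, \xi^2\bigr) V_t\, dt$, so
\[
d(U_t V_t) = U_t \Bigl[\tfrac{1}{2}(D + D^\#) - \sum_\xi Q(\xi)\, \xi^2\Bigr] V_t\, dt.
\]

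I then show that the bracket vanishes. By Lemma \ref{Sum_xi} together with $\xi^\# = -\xi$, one has $\sum_\xi Q(\xi)\, \xi^2 = D$; and $D^\# = D$ because $D$ is diagonal and the symmetry $\pi(e_{-m,-n}^{Re}) = \pi(e_{mn}^{Re})$ (with the analog for the imaginary basis) yields $D_{-m} = D_m$ after reindexing $k \mapsto -k$ in the defining sum \eqref{DMatrix} for $D$. Hence $d(U_t V_t) \equiv 0$, so $U_t V_t = U_0 V_0 = I$ for all $t$, proving $X_t X_t^\# = I$. Since $Y_t$ is Hilbert--Schmidt and hence compact, $X_t = I + Y_t$ is Fredholm of index zero; $X_t X_t^\# = I$ then forces $X_t$ to be surjective, hence bijective, so $X_t^\# = X_t^{-1}$, giving $X_t^\# X_t = I$ as well. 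By Remark \ref{def.SPInf2}, $X_t \in \Sp(\infty)$ almost surely. I expect the main technical obstacle to be the rigorous matrix-valued It\^{o} computation for Hilbert--Schmidt-valued semimartingales --- in particular identifying $d[U,V]_t$ with the correct ordering of factors $U_t, \xi, \xi, V_t$ --- after which the algebraic cancellation $\tfrac{1}{2}(D + D^\#) = \sum_\xi Q(\xi)\, \xi^2$ is a direct consequence of Lemma \ref{Sum_xi} and the built-in symmetries of the basis $\mathcal{B}_{\SpHS}$.
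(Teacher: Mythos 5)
Your proposal is correct and follows essentially the same route as the paper: the paper likewise works with the pair $(Y_t,Y_t^\#)$ via Proposition \ref{YSharp}, applies the infinite-dimensional It\^o formula to $\Lambda\big((Y_t+I)(Y_t^\#+I)\big)$ for arbitrary bounded linear functionals $\Lambda$ (the rigorous form of your matrix-valued product rule), cancels the martingale terms, and offsets the drift $\tfrac{1}{2}(Y_s+I)(D+D^\#)(Y_s^\#+I)$ against the It\^o correction term identified through Lemma \ref{Trace} and Lemma \ref{Sum_xi}, exactly as in your computation. The only minor departures are that you check the reality and $\|\cdot\|_2$ conditions explicitly (they are automatic since $Y_t$ takes values in $\SpHS$) and you deduce $(Y_t^\#+I)(Y_t+I)=I$ by a Fredholm-index shortcut, where the paper simply repeats the It\^o argument with the factors in the reverse order.
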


\begin{proof}
The proof is adapted from papers by M.~Gordina \cite{Gordina2000a,
Gordina2005a}. Let $Y_t$ be the solution to Equation \eqref{e.6.1}
and $Y_t^\#$ be the solution to Equation \eqref{e.6.2}. Consider the
process $\textbf{Y}_t=(Y_t,Y_t^\#)$ in the product space
${\SpHS}\times {\SpHS}$. It satisfies the following stochastic
differential equation
\[
d\textbf{Y}_t=(B(Y_t),B^\#(Y_t^\#))dW + (F(Y_t),F^\#(Y_t^\#))dt.
\]

Let $G$ be a function on the Hilbert space ${\SpHS}\times {\SpHS}$
defined by $G(Y_1,Y_2)=\Lambda((Y_1+I)(Y_2+I))$, where $\Lambda$ is
a nonzero linear real bounded functional from ${\SpHS}\times
{\SpHS}$ to $\mathbb{R}$. We will apply It\^o's formula to
$G(\textbf{Y}_t)=G(Y_t,Y_t^\#)$. Then $(Y_t+I)(Y_t^\#+I)=I$ if and
only if $\Lambda((Y_t+I)(Y_t^\#+I)-I)=0$ for any $\Lambda$.

In order to use It\^o's formula we must verify that $G$ and the
derivatives $G_t$, $G_{\textbf{Y}}$, $G_{\textbf{YY}}$ are uniformly
continuous on bounded subsets of $[0,T]\times {\SpHS}\times
{\SpHS}$, where $G_{\textbf{Y}}$ is defined as follows
\[
G_{\textbf{Y}}(\textbf{Y})(\textbf{S}) =\lim_{\epsilon\to0}
\frac{G(\textbf{Y}+\epsilon\textbf{S})-G(\textbf{Y})}{\epsilon}
\hspace{.2in}\mbox{for any }\textbf{Y},\textbf{S}\in {\SpHS}\times
{\SpHS}
\]
and $G_{\textbf{YY}}$ is defined as follows
\[
G_{\textbf{YY}}(\textbf{Y})(\textbf{S}\otimes\textbf{T})
=\lim_{\epsilon\to0}
\frac{G_{\textbf{Y}}(\textbf{Y}+\epsilon\textbf{T})(\textbf{S})
-G_{\textbf{Y}}(\textbf{Y})(\textbf{S})}{\epsilon}
\]
for any $\textbf{Y},\textbf{S},\textbf{T}\in {\SpHS}\times {\SpHS}$.
Let us calculate $G_t$, $G_{\textbf{Y}}$, $G_{\textbf{YY}}$.
Clearly, $G_t=0$. It is easy to verify that for any
$\textbf{S}=(S_1,S_2)\in {\SpHS}\times {\SpHS}$
\[
G_{\textbf{Y}}(\textbf{Y})(\textbf{S})
=\Lambda(S_1(Y_2+I)+(Y_1+I)S_2)
\]
and for any $\textbf{S}=(S_1,S_2)\in {\SpHS}\times {\SpHS}$ and
$\textbf{T}=(T_1,T_2)\in {\SpHS}\times {\SpHS}$
\[
G_{\textbf{YY}}(\textbf{Y})(\textbf{S}\otimes\textbf{T})
=\Lambda(S_1T_2+T_1S_2).
\]
So the condition is satisfied.

We will use the following notation
\begin{align*}
&G_{\textbf{Y}}(\textbf{Y})(\textbf{S})
=\langle \bar{G}_{\textbf{Y}}(\textbf{Y}),\textbf{S} \rangle_{\SpHS\times\SpHS}\\
\noalign{\vskip .05 true in}
&G_{\textbf{YY}}(\textbf{Y})(\textbf{S}\otimes\textbf{T})= \langle
\bar{G}_{\textbf{YY}}(\textbf{Y})\textbf{S},\textbf{T}
\rangle_{\SpHS\times\SpHS},
\end{align*}
where $\bar{G}_{\textbf{Y}}(\textbf{Y})$ is an element of
${\SpHS}\times {\SpHS}$ corresponding to the functional
$G_{\textbf{Y}}(\textbf{Y})$ in $({\SpHS}\times {\SpHS})^\ast$ and
$\bar{G}_{\textbf{YY}}(\textbf{Y})$ is an operator on ${\SpHS}\times
{\SpHS}$ corresponding to the functional
$G_{\textbf{YY}}(\textbf{Y})\in (({\SpHS}\times
{\SpHS})\otimes({\SpHS}\times {\SpHS}))^\ast$.

Now we can apply It\^o's formula to $G(\textbf{Y}_t)$
\begin{align*}
G(\textbf{Y}_t)-G(\textbf{Y}_0)
=&\int_0^t \langle \bar{G}_{\textbf{Y}}(\textbf{Y}_s),
\big(B(Y_s)dW_s,B^\#(Y_s^\#)dW_s\big)\rangle_{\SpHS\times\SpHS}\\
+&\int_0^t \langle \bar{G}_{\textbf{Y}}(\textbf{Y}_s),
\big(F(Y_s),F^\#(Y_s^\#)\big)\rangle_{\SpHS\times\SpHS} ds\\
+&\int_0^t\frac{1}{2}\Tr_{{\SpHS}\times
{\SpHS}}\bigg[\bar{G}_{\textbf{YY}}(\textbf{Y}_s)
\Big(B(Y_s)Q^{1/2},B^\#(Y_s^\#)Q^{1/2}\Big)\\
&\hspace{1.5in}\Big(B(Y_s)Q^{1/2},B^\#(Y_s^\#)Q^{1/2}\Big)^\ast\bigg]
ds.
\end{align*}

Let us calculate the three integrands separately.
The first integrand is
\begin{align*}
\langle \bar{G}_{\textbf{Y}}(\textbf{Y}_s),
&\big(B(Y_s)dW_s,B^\#(Y_s^\#)dW_s\big)\rangle_{\SpHS\times\SpHS}\\
&=\Big(B(Y_s)dW_s\Big)(Y_s^\#+I)+(Y_s+I)\Big(B^\#(Y_s^\#)dW_s\Big)\\
&=(Y_s+I)dW_s(Y_s^\#+I)+(Y_s+I)dW_s^\#(Y_s^\#+I)
=0.
\end{align*}

The second integrand is
\begin{align*}
\langle \bar{G}_{\textbf{Y}}(\textbf{Y}_s),
&\big(F(Y_s),F^\#(Y_s^\#)\big)\rangle_{\SpHS\times\SpHS}\\
&=F(Y_s)(Y_s^\#+I)+(Y_s+I)F^\#(Y_s^\#)\\
&=\frac{1}{2}(Y_s+I)D(Y_s^\#+I)+\frac{1}{2}(Y_s+I)D^\#(Y_s^\#+I)\\
&=\frac{1}{2}(Y_s+I)(D+D^\#)(Y_s^\#+I)\\
&=(Y_s+I)D(Y_s^\#+I),
\end{align*}
where we have used the fact that $D=D^\#$, since $D$ is a diagonal matrix
with all real entries.

The third integrand is
\begin{align*}
& \frac{1}{2}\Tr_{{\SpHS}\times {\SpHS}}
\\
& \left[\bar{G}_{\textbf{YY}}(\textbf{Y}_s)
\left(B(Y_s)Q^{1/2},B^\#(Y_s^\#)Q^{1/2}\right)
\left(B(Y_s)Q^{1/2},B^\#(Y_s^\#)Q^{1/2}\right)^\ast\right]
\\
&
=\frac{1}{2}\Tr_{{\SpHS}}\left[\left(B(Y_s)Q^{1/2},B^\#(Y_s^\#)Q^{1/2}\right)^\ast
\bar{G}_{\textbf{YY}}(\textbf{Y}_s)
\left(B(Y_s)Q^{1/2},B^\#(Y_s^\#)Q^{1/2}\right) \right]\\
&=\frac{1}{2}\sum_{\xi\in\mathcal{B}_{\SpHS}}G_{\textbf{YY}}(\textbf{Y}_s)
\bigg(\Big(B(Y_s)Q^{1/2}\xi,B^\#(Y_s^\#)Q^{1/2}\xi\Big)\\
&\hspace{2in}
\otimes\Big(B(Y_s)Q^{1/2}\xi,B^\#(Y_s^\#)Q^{1/2}\xi\Big)\bigg)\\
&=\sum_{\xi\in\mathcal{B}_{\SpHS}}
\Big(B(Y_s)Q^{1/2}\xi\Big)\Big(B^\#(Y_s^\#)Q^{1/2}\xi\Big)\\
&=\sum_{\xi\in\mathcal{B}_{\SpHS}}(Y_s+I)
\bigg(\big(Q^{1/2}\xi\big)\big(Q^{1/2}\xi\big)^\#\bigg)(Y_s^\#+I)\\
&=-(Y_s+I)D(Y_s^\#+I),
\end{align*}
where the second equality follows from Lemma \ref{Trace}, and the
last equality follows from Lemma \ref{Sum_xi}.

The above calculations show that the stochastic differential of $G$
is zero. So $G(\textbf{Y}_t)=G(\textbf{Y}_0)=\Lambda(I)$ for any
$t>0$ and any nonzero linear real bounded functional $\Lambda$ on
${\SpHS}\times {\SpHS}$. This means $(Y_t+I)(Y_t^\#+I)=I$ almost
surely for any $t>0$. Similarly we can show $(Y_t^\#+I)(Y_t+I)=I$
almost surely for any $t>0$. Therefore $Y_t+I\in \Sp(\infty)$ almost
surely for any $t>0$.
\end{proof}

\section{Ricci curvature of $\Sp(\infty)$}

In \cite{Gordina1, Gordina2}, Gordina computed the Ricci curvatures of the quotient
space $\Diff(S^1)/S^1$ and several Hilbert-Schmidt groups.
Using this method, I computed the Ricci curvatures of the symplectic group
$\Sp(\infty)$.
The result shows that for most of the directions the Ricci curvatures
of $\Sp(\infty)$ are not bounded from below.

Let $G$ be a finite dimensional Lie group, and $\mathfrak{g}$ its Lie algebra.
Let $\langle\cdot,\cdot\rangle_\mathfrak{g}$ be an inner product on $\mathfrak{g}$.
Then $\langle\cdot,\cdot\rangle_\mathfrak{g}$ defines a unique left-invariant metric
on the Lie group $G$ compactible with the Lie group structure.
In \cite{Milnor1}, J. Milnor studied the Riemannian geometry of Lie groups.
For $x,y,z\in\mathfrak{g}$, the \emph{Levi-Civita connection} $\nabla_x$ is given by
\begin{equation}\label{Levi-Civita}
\langle\nabla_x y,z\rangle_\mathfrak{g}=\frac{1}{2}(\langle[x,y],z\rangle_\mathfrak{g}
-\langle[y,z],x\rangle_\mathfrak{g}+\langle[z,x],y\rangle_\mathfrak{g})
\end{equation}
The \emph{Riemann curvature tensor} $R_{xy}$ is given by
\begin{equation}\label{RTensor}
R_{xy}=\nabla_{[x,y]}-\nabla_x\nabla_y+\nabla_y\nabla_x
\end{equation}
For any orthogonal $x,y\in\mathfrak{g}$, the \emph{sectional curvature} $K(x,y)$ is given by
\begin{equation}\label{SectCurv}
K(x,y) = \langle R_{xy}(x),y\rangle_\mathfrak{g}
\end{equation}
Let us choose an orthonormal basis $\{\xi_i\}_{i=1}^N$ of $\mathfrak{g}$, where $N$ is the
dimension of the Lie group $G$. Let $x\in\mathfrak{g}$. Then the \emph{Ricci curvature}
$\Ric(x)$ is given by
\begin{equation}\label{RicciCurv}
\Ric(x) = \sum_{i=1}^N K(x,\xi_i) = \sum_{i=1}^N \langle R_{x\xi_i}(x),\xi_i\rangle_\mathfrak{g}
\end{equation}

The group $\Sp(\infty)$ and its Lie algebra $\fraksp(\infty)$ are defined in
Section 3 and Section 6.
Basically, elements in both the Lie group $\Sp(\infty)$
and the Lie algebra $\fraksp(\infty)$ are block matrices of the form:
\[
\left(
\begin{array}{ll}
a &b\\
\bar{b} &\bar{a}
\end{array}
\right)
\]
where each of the blocks is an infinite-dimensional matrix.
The blocks $a$ and $\bar{a}$ are complex conjugate with each other.
The blocks $b$ and $\bar{b}$ are also complex conjugate with each other,
are required to be a Hilbert-Schmidt matrices.
For a block matrix to be an element of $\Sp(\infty)$, it is also required that
the matrix is invertible, and preserve a certain symplectic form.
For a block matrix to be an element of $\fraksp(\infty)$, it is required that
$a+a^\dag=0$ or $a^T+\bar{a}=0$, which means the block $a$ is conjugate skew-symmetric,
and $b=b^T$, which means the block $b$ is symmetric.

To write the block matrix explicitly, we index the matrix by
$\mathbb{Z}\backslash\{0\}\times\mathbb{Z}\backslash\{0\}$,
so the matrix is written as $\{A_{mn}\}_{m,n\in\mathbb{Z}\backslash\{0\}}$.
An entry in block $a$ has $m,n>0$; an entry in block $\bar{a}$ has $m,n<0$;
an entry in block $b$ has $m>0,n<0$; an entry in block $\bar{b}$ has $m<0,n>0$.
The condition that blocks $a$ and $b$ are conjugate to blocks $\bar{a}$ and $\bar{b}$
can be expressed as $A_{m,n}=\overline{A_{-m,-n}}$.
The condition $a+a^\dag=0$ or $a^T+\bar{a}=0$ can be expressed as
$A_{nm}+\overline{A_{mn}}=0$ where $m,n>0$ or $m,n<0$.
The condition $b=b^T$ can be expressed as $A_{m,n}=A_{-n,-m}$ where $m>0,n<0$.

To find Ricci curvature, we need to choose a metric for the Lie algebra $\fraksp(\infty)$.
Let us define a sequence of positive numbers
\[
\{ \lambda_i\in\mathbb{R}_+ | \lambda_i=\lambda_{-i}, i\in\mathbb{Z}\backslash\{0\} \}
\]
The sequence $\{\lambda_i\}$ will serve as parameters to fine tune the metric
that we are going to choose.

\begin{remark}\label{CanonHSMetric}
Let us first consider the space $\HS$ of Hilbert-Schmidt matrices.
The Hilbert space $\HS$, if viewed as a complex Hilbert space, has a canonical
inner product given by:
\[
\langle A,B \rangle = \Tr(AB^\dag) = \Tr(A\bar{B}^T),
\hspace{.1in} A,B\in\HS
\]
If viewed as \emph{real} Hilbert space, $\HS$ has a canonical inner product given by:
for $A,B\in\HS$, writing $A=A_1+iA_2$ and $B=B_1+iB_2$, where $A_1,A_2,B_1,B_2$ are
matrices with real value entries, then
\[
\langle A,B \rangle = \Tr(A_1B_1^T)+\Tr(A_2B_2^T)
\]
Let $e_{ab}$ be the infinite-dimensional matrix with $1$ in the entry $(a,b)$,
and $0$ in all other entries, where $a, b$ are indices of the matrix such that
$a,b\in\mathbb{Z}\backslash\{0\}$.
Then the above canonical inner product on $\HS$ viewed as \emph{real} Hilbert space
is equivalent to choosing the set
\[
\{e_{ab},ie_{ab} | a,b\in\mathbb{Z}\backslash\{0\}\}
\]
as an orthonormal basis.
\end{remark}

\begin{definition}\label{HSMetric}
Let
\begin{equation}\label{HSxi}
\xi_{ab}=2\lambda_a\lambda_b e_{ab}
\end{equation}
We define an inner product $\langle\cdot,\cdot\rangle_{\HS}$ on $\HS$
by choosing the following set
\begin{equation}
\{ \xi_{ab},i\xi_{ab} | a,b\in\mathbb{Z}\backslash\{0\} \}
\end{equation}
as an orthonormal basis for the \emph{real} Hilbert space $\HS$.
\end{definition}

\begin{remark}\label{RecoverCanon}
If we set the parameter $\lambda_i=1/\sqrt{2}$ for all $i\in\mathbb{Z}\backslash\{0\}$
in Definition \ref{HSMetric}, we can recover the canonical inner product of $\HS$
(remark \ref{CanonHSMetric}) as a \emph{real} Hilbert space.
\end{remark}

The Lie algebra $\fraksp(\infty)$ may contain unbounded opertors.
For simplicity, we consider the subspace $\SpHS=\fraksp(\infty)\cap\HS$.
Now we can choose orthonormal set of the space $\SpHS$ according to the symmety
of matrices in the Lie algebra $\fraksp(\infty)$.

\begin{definition}
Let
\begin{eqnarray*}
&& \mu_{ab}^{Re}=\lambda_a\lambda_b (e_{a,b}-e_{b,a}+e_{-a,-b}-e_{-b,-a}),
\hspace{.1in} a > b > 0 \\&&
\mu_{ab}^{Im}=\lambda_a\lambda_b (ie_{a,b}-ie_{b,a}+ie_{-a,-b}-ie_{-b,-a}),
\hspace{.1in} a \ge b > 0 \\&&
\nu_{ab}^{Re}=\lambda_a\lambda_b (e_{a,b}+e_{-b,-a}+e_{-a,-b}+e_{b,a}),
\hspace{.1in} a \ge -b > 0 \\&&
\nu_{ab}^{Im}=\lambda_a\lambda_b (ie_{a,b}+ie_{-b,-a}-ie_{-a,-b}-ie_{b,a}),
\hspace{.1in} a \ge -b > 0
\end{eqnarray*}
Let
$A^{Re}=\{\mu_{ab}^{Re} | a > b > 0\}$,
$A^{Im}=\{\mu_{ab}^{Im} | a \ge b > 0\}$,
$B^{Re}=\{\nu_{ab}^{Re} | a \ge -b > 0\}$,
$B^{Im}=\{\nu_{ab}^{Im} | a \ge -b > 0\}$, and
$\mathcal{B}_\lambda = A^{Re} \cup A^{Im} \cup B^{Re} \cup B^{Im}$.
\end{definition}

\begin{remark}
It is easy to verify that matrices in the set $\mathcal{B}_\lambda$ all belong
to the space $\SpHS$. So $\mathcal{B}_\lambda$ is a subset of
$\SpHS$ and $\fraksp(\infty)$. Also, by definition of $\xi_{ab}$ (equation \ref{HSxi}),
it is easy to verify
\begin{eqnarray}\label{xicomb}
&&\mu_{ab}^{Re}=\frac{1}{2}
(\xi_{a,b}-\xi_{b,a}+\xi_{-a,-b}-\xi_{-b,-a}) \notag
\\&&
\mu_{ab}^{Im}=\frac{1}{2}
(i\xi_{a,b}-i\xi_{b,a}+i\xi_{-a,-b}-i\xi_{-b,-a})
\\&&
\nu_{ab}^{Re}=\frac{1}{2}
(\xi_{a,b}+\xi_{-b,-a}+\xi_{-a,-b}+\xi_{b,a}) \notag
\\&&
\nu_{ab}^{Im}=\frac{1}{2}
(i\xi_{a,b}+i\xi_{-b,-a}-i\xi_{-a,-b}-i\xi_{b,a}) \notag
\end{eqnarray}
\end{remark}

\begin{definition}\label{spmetric}
We define an inner product $\langle\cdot,\cdot\rangle_{\fraksp}$
on both $\SpHS$ and $\fraksp(\infty)$ by choosing
the set $\mathcal{B}_\lambda$ as an orthonormal set.
\end{definition}

\begin{remark}
We note that the inner product on $\SpHS$ and $\fraksp(\infty)$
is equivalent to the subspace inner product induced from the inner product
on $\HS$ defined in Definition \eqref{HSMetric}.
Therefore, for $x,y\in\SpHS$, $\langle x,y\rangle_\HS=\langle x,y\rangle_\fraksp$.
\end{remark}

\begin{remark}
For $\mu_{ab}^{Re}$, the indices satisfy $a>b>0$, which means
the entry is in the strict upper triangular block.
For $\mu_{ab}^{Im}$, the indices satisfy $a\ge b>0$, which means
the entry is in the upper triangular block including the diagonal.
For $\nu_{ab}^{Re}$, the indices satisfy $a\ge -b>0$, which means
the entry is in the other upper triangular block including the diagonal.
For $\nu_{ab}^{Im}$, the indices satisfy $a\ge -b>0$, which means
the entry is in the other upper triangular block including the diagonal.
\end{remark}

\begin{definition}\label{SpRicci}
Using Ricci curvature formula (Equation \ref{RicciCurv}) for $\Sp(\infty)$
and $\fraksp(\infty)$, we define, for $x\in\fraksp(\infty)$,
\begin{equation}
\Ric(x) = \sum_{\xi\in\mathcal{B}_\lambda} K(x,\xi)
= \sum_{\xi\in\mathcal{B}_\lambda} \langle R_{x\xi}(x),\xi\rangle_\fraksp
\end{equation}
By definition of $\mathcal{B}_\lambda$, the above sum will break into four parts:
\begin{equation}
\Ric(x)
=\sum_{a>b>0} K(x,\mu_{ab}^{Re})
+\sum_{a\ge b>0} K(x,\mu_{ab}^{Im})
+\sum_{a\ge -b>0} K(x,\nu_{ab}^{Re})
+\sum_{a\ge -b>0} K(x,\nu_{ab}^{Im})
\end{equation}
For computational reason, we define the following \emph{truncated Ricci curvature}:
\begin{align}
\Ric^N(x)
=\sum_{N\ge a>b>0} K(x,\mu_{ab}^{Re})
&+\sum_{N\ge a\ge b>0} K(x,\mu_{ab}^{Im})\notag\\
&+\sum_{N\ge a\ge -b>0} K(x,\nu_{ab}^{Re})
+\sum_{N\ge a\ge -b>0} K(x,\nu_{ab}^{Im})
\end{align}
We have $\Ric(x)=\lim_{N\to\infty} \Ric^N(x)$.
\end{definition}

In the rest of the paper, we will compute the following Ricci curvatures via
the corresponding truncated Ricci curvatures:
\[
\Ric(\mu_{ab}^{Re}),
\Ric(\mu_{ab}^{Im}),
\Ric(\nu_{ab}^{Re}),
\Ric(\nu_{ab}^{Im})
\]
All of these computations boil down to matrix multiplications.
The following lemma is an important tool to the computation of Ricci curvature.

\begin{lemma}\label{ConnFormula}
We have the following Levi-Civita connection formula, where $\delta$
is the Kronecker delta:
\begin{eqnarray*}
&&\nabla_{\xi_{ab}}\xi_{cd}
=\delta_{bc}\lambda_c^2\xi_{ad}-\delta_{da}\lambda_a^2\xi_{cb}
-\delta_{ca}\lambda_d^2\xi_{db}+\delta_{db}\lambda_c^2\xi_{ac}
+\delta_{bd}\lambda_a^2\xi_{ca}-\delta_{ac}\lambda_b^2\xi_{bd}
\\&&
\nabla_{i\xi_{ab}}i\xi_{cd}
=-\delta_{bc}\lambda_c^2\xi_{ad}+\delta_{da}\lambda_a^2\xi_{cb}
-\delta_{ca}\lambda_d^2\xi_{db}+\delta_{db}\lambda_c^2\xi_{ac}
+\delta_{bd}\lambda_a^2\xi_{ca}-\delta_{ac}\lambda_b^2\xi_{bd}
\\&&
\nabla_{\xi_{ab}}i\xi_{cd}
=\delta_{bc}\lambda_c^2i\xi_{ad}-\delta_{da}\lambda_a^2i\xi_{cb}
+\delta_{ca}\lambda_d^2i\xi_{db}-\delta_{db}\lambda_c^2i\xi_{ac}
+\delta_{bd}\lambda_a^2i\xi_{ca}-\delta_{ac}\lambda_b^2i\xi_{bd}
\\&&
\nabla_{i\xi_{ab}}\xi_{cd}
=\delta_{bc}\lambda_c^2i\xi_{ad}-\delta_{da}\lambda_a^2i\xi_{cb}
-\delta_{ca}\lambda_d^2i\xi_{db}+\delta_{db}\lambda_c^2i\xi_{ac}
-\delta_{bd}\lambda_a^2i\xi_{ca}+\delta_{ac}\lambda_b^2i\xi_{bd}
\end{eqnarray*}
\end{lemma}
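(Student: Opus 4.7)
The plan is to apply the Koszul/Levi-Civita formula \eqref{Levi-Civita} directly, with $x$, $y$, and the test vector $z$ ranging over the orthonormal set $\{\xi_{ab}, i\xi_{ab}\}$ of the Hilbert-Schmidt inner product. Since $\nabla_x y$ is determined by the scalars $\langle \nabla_x y, z\rangle_{\HS}$ as $z$ varies over an orthonormal basis, and since \eqref{Levi-Civita} expresses these scalars entirely in terms of inner products of commutators, the verification reduces to a mechanical expansion once the basic bracket $[\xi_{ab}, \xi_{cd}]$ is known in the $\xi$-basis.

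First I would compute the bracket. Using $e_{pq}e_{rs} = \delta_{qr}e_{ps}$ and $\xi_{ab}=2\lambda_a\lambda_b e_{ab}$, a short calculation that collapses the $\lambda$'s under each Kronecker delta (noting $\lambda_b=\lambda_c$ when $\delta_{bc}$ fires, etc.) gives
\[
[\xi_{ab}, \xi_{cd}] = 2\lambda_c^2\,\delta_{bc}\,\xi_{ad} - 2\lambda_a^2\,\delta_{ad}\,\xi_{cb}.
\]
By $\mathbb{R}$-bilinearity of the commutator together with the scalar identities $[iA,iB]=-[A,B]$ and $[iA,B]=[A,iB]=i[A,B]$, this single formula determines all four bracket variants $[\xi_{ab},\xi_{cd}]$, $[i\xi_{ab},i\xi_{cd}]$, $[\xi_{ab},i\xi_{cd}]$, $[i\xi_{ab},\xi_{cd}]$.

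I would then substitute into \eqref{Levi-Civita} with $z = \xi_{pq}$ and $z = i\xi_{pq}$ in turn. Using that $\{\xi_{ab}, i\xi_{ab}\}$ is orthonormal with $\langle \xi_{pq}, i\xi_{rs}\rangle_{\HS}=0$, the three inner products on the right of \eqref{Levi-Civita} collapse into products of Kronecker deltas, producing exactly six surviving terms in the real case: two coming from each of the three bracket slots in the formula. Reading off which $(p,q)$ is selected by each cluster of deltas identifies the corresponding basis element $\xi_{pq}$ together with its $\lambda^2$ coefficient, and the six terms align exactly with the right-hand side of the first identity in the lemma. The $\nabla_{i\xi_{ab}}i\xi_{cd}$ case then follows by observing that only the first bracket in \eqref{Levi-Civita} flips sign (since $[iA,iB]=-[A,B]$), while the other two keep their sign because $\langle iA, iB\rangle_{\HS} = \langle A, B\rangle_{\HS}$; this flips precisely the two ``diagonal'' $\delta_{bc}$ and $\delta_{ad}$ contributions and leaves the other four unchanged, matching the second identity. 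The mixed cases $\nabla_{\xi_{ab}}i\xi_{cd}$ and $\nabla_{i\xi_{ab}}\xi_{cd}$ carry exactly one factor of $i$, so $\langle A, iB\rangle_{\HS}=0$ for real $A,B$ forces the output into the $i\xi$ basis; tracking which of the three brackets acquires the $i$ determines the relative signs of the six terms and yields the third and fourth identities.

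The real obstacle is purely bookkeeping: across four identities with six terms each, roughly two dozen sign-and-index decisions must be tracked without error. The index bookkeeping is the subtle part, because when a delta such as $\delta_{bc}$ fires one has $\lambda_b=\lambda_c$, so coefficients like $\lambda_c^2$ could be written as $\lambda_b^2$ — to avoid confusion I would fix the convention of always naming the $\lambda$ by an index that still appears in the surviving $\xi_{pq}$ factor, so that the final six-term expression is unambiguous. With that convention in place, the four formulas emerge by a direct, if lengthy, verification.
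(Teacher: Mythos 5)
Your proposal is correct and follows essentially the same route as the paper: compute $[\xi_{ab},\xi_{cd}]=2\lambda_c^2\delta_{bc}\xi_{ad}-2\lambda_a^2\delta_{ad}\xi_{cb}$, feed it into the Koszul formula \eqref{Levi-Civita}, and use orthonormality of $\{\xi_{ef},i\xi_{ef}\}$ in $\HS$ to read off the six surviving delta terms in each case. The only cosmetic difference is that you deduce the $i$-variants via $[iA,iB]=-[A,B]$, $[iA,B]=[A,iB]=i[A,B]$ and sign tracking, whereas the paper expands the Koszul formula separately for each of the four cases; the substance is identical.
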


\begin{proof}
We have
\[ \xi_{ab}\xi_{cd}=2\lambda_b^2\delta_{cb}\xi_{ad} \]
So
\[
[\xi_{ab},\xi_{cd}]=\xi_{ab}\xi_{cd}-\xi_{cd}\xi_{ab}
=2\lambda_c^2\delta_{cb}\xi_{ad}-2\lambda_a^2\delta_{ad}\xi_{cb}
\]

In the following, $\langle\cdot,\cdot\rangle$ stands for $\langle\cdot,\cdot\rangle_\HS$.
Using orthonormality,
\begin{eqnarray*}
&&
2\langle\nabla_{\xi_{ab}}\xi_{cd},\xi_{ef}\rangle
\\&&\hspace{.3in}
=\langle[\xi_{ab},\xi_{cd}],\xi_{ef}\rangle
-\langle[\xi_{cd},\xi_{ef}],\xi_{ab}\rangle+\langle[\xi_{ef},\xi_{ab}],\xi_{cd}\rangle
\\&&\hspace{.3in}
=2\delta_{bc}\delta_{ae}\delta_{df}\lambda_c^2
-2\delta_{da}\delta_{ce}\delta_{bf}\lambda_a^2
-2\delta_{de}\delta_{ca}\delta_{fb}\lambda_e^2
\\&&\hspace{.3in}
\hspace{.3in}+2\delta_{fc}\delta_{ea}\delta_{db}\lambda_c^2
+2\delta_{fa}\delta_{ec}\delta_{bd}\lambda_a^2
-2\delta_{be}\delta_{ac}\delta_{fd}\lambda_e^2
\end{eqnarray*}
and
\[
2\langle\nabla_{\xi_{ab}}\xi_{cd},i\xi_{ef}\rangle=\langle[\xi_{ab},\xi_{cd}],i\xi_{ef}\rangle
-\langle[\xi_{cd},i\xi_{ef}],\xi_{ab}\rangle+\langle[i\xi_{ef},\xi_{ab}],\xi_{cd}\rangle=0
\]
Therefore,
\[
\nabla_{\xi_{ab}}\xi_{cd}
=\delta_{bc}\lambda_c^2\xi_{ad}-\delta_{da}\lambda_a^2\xi_{cb}
-\delta_{ca}\lambda_d^2\xi_{db}+\delta_{db}\lambda_c^2\xi_{ac}
+\delta_{bd}\lambda_a^2\xi_{ca}-\delta_{ac}\lambda_b^2\xi_{bd}
\]
Similarly,
\begin{eqnarray*}
&&
2\langle\nabla_{i\xi_{ab}}i\xi_{cd},\xi_{ef}\rangle
\\&&\hspace{.3in}
=\langle[i\xi_{ab},i\xi_{cd}],\xi_{ef}\rangle
-\langle[i\xi_{cd},\xi_{ef}],i\xi_{ab}\rangle+\langle[\xi_{ef},i\xi_{ab}],i\xi_{cd}\rangle
\\&&\hspace{.3in}
=-\delta_{bc}\delta_{ae}\delta_{df}\lambda_c^2
+\delta_{da}\delta_{ce}\delta_{bf}\lambda_a^2
-\delta_{de}\delta_{ca}\delta_{fb}\lambda_e^2
\\&&\hspace{.3in}
\hspace{.2in}+\delta_{fc}\delta_{ea}\delta_{db}\lambda_c^2
+\delta_{fa}\delta_{ec}\delta_{bd}\lambda_a^2
-\delta_{be}\delta_{ac}\delta_{fd}\lambda_e^2
\end{eqnarray*}
and
\[
2\langle\nabla_{i\xi_{ab}}i\xi_{cd},i\xi_{ef}\rangle=\langle[i\xi_{ab},i\xi_{cd}],i\xi_{ef}\rangle
-\langle[i\xi_{cd},i\xi_{ef}],i\xi_{ab}\rangle+\langle[i\xi_{ef},i\xi_{ab}],i\xi_{cd}\rangle=0
\]
Therefore,
\[
\nabla_{i\xi_{ab}}i\xi_{cd}
=-\delta_{bc}\lambda_c^2\xi_{ad}+\delta_{da}\lambda_a^2\xi_{cb}
-\delta_{ca}\lambda_d^2\xi_{db}+\delta_{db}\lambda_c^2\xi_{ac}
+\delta_{bd}\lambda_a^2\xi_{ca}-\delta_{ac}\lambda_b^2\xi_{bd}
\]
Similarly,
\begin{eqnarray*}
&&
2\langle\nabla_{\xi_{ab}}i\xi_{cd},i\xi_{ef}\rangle
\\&&\hspace{.3in}
=\langle[\xi_{ab},i\xi_{cd}],i\xi_{ef}\rangle
-\langle[i\xi_{cd},i\xi_{ef}],\xi_{ab}\rangle+\langle[i\xi_{ef},\xi_{ab}],i\xi_{cd}\rangle
\\&&\hspace{.3in}
=\delta_{bc}\delta_{ae}\delta_{df}\lambda_c^2
-\delta_{da}\delta_{ce}\delta_{bf}\lambda_a^2
+\delta_{de}\delta_{ca}\delta_{fb}\lambda_e^2
\\&&\hspace{.3in}
\hspace{.2in}-\delta_{fc}\delta_{ea}\delta_{db}\lambda_c^2
+\delta_{fa}\delta_{ec}\delta_{bd}\lambda_a^2
-\delta_{be}\delta_{ac}\delta_{fd}\lambda_e^2
\end{eqnarray*}
and
\[
2\langle\nabla_{\xi_{ab}}i\xi_{cd},\xi_{ef}\rangle=\langle[\xi_{ab},i\xi_{cd}],\xi_{ef}\rangle
-\langle[i\xi_{cd},\xi_{ef}],\xi_{ab}\rangle+\langle[\xi_{ef},\xi_{ab}],i\xi_{cd}\rangle=0
\]
Therefore,
\[
\nabla_{\xi_{ab}}i\xi_{cd}
=\delta_{bc}\lambda_c^2i\xi_{ad}-\delta_{da}\lambda_a^2i\xi_{cb}
+\delta_{ca}\lambda_d^2i\xi_{db}-\delta_{db}\lambda_c^2i\xi_{ac}
+\delta_{bd}\lambda_a^2i\xi_{ca}-\delta_{ac}\lambda_b^2i\xi_{bd}
\]
Similarly,
\begin{eqnarray*}
&&
2\langle\nabla_{i\xi_{ab}}\xi_{cd},i\xi_{ef}\rangle
\\&&\hspace{.3in}
=\langle[i\xi_{ab},\xi_{cd}],i\xi_{ef}\rangle
-\langle[\xi_{cd},i\xi_{ef}],i\xi_{ab}\rangle+\langle[i\xi_{ef},i\xi_{ab}],\xi_{cd}\rangle
\\&&\hspace{.3in}
=\delta_{bc}\delta_{ae}\delta_{df}\lambda_c^2
-\delta_{da}\delta_{ce}\delta_{bf}\lambda_a^2
-\delta_{de}\delta_{ca}\delta_{fb}\lambda_e^2
\\&&\hspace{.3in}
\hspace{.2in}+\delta_{fc}\delta_{ea}\delta_{db}\lambda_c^2
-\delta_{fa}\delta_{ec}\delta_{bd}\lambda_a^2
+\delta_{be}\delta_{ac}\delta_{fd}\lambda_e^2
\end{eqnarray*}
and
\[
2\langle\nabla_{i\xi_{ab}}\xi_{cd},\xi_{ef}\rangle=\langle[i\xi_{ab},\xi_{cd}],\xi_{ef}\rangle
-\langle[\xi_{cd},\xi_{ef}],i\xi_{ab}\rangle+\langle[\xi_{ef},i\xi_{ab}],\xi_{cd}\rangle=0
\]
Therefore,
\[
\nabla_{i\xi_{ab}}\xi_{cd}
=\delta_{bc}\lambda_c^2i\xi_{ad}-\delta_{da}\lambda_a^2i\xi_{cb}
-\delta_{ca}\lambda_d^2i\xi_{db}+\delta_{db}\lambda_c^2i\xi_{ac}
-\delta_{bd}\lambda_a^2i\xi_{ca}+\delta_{ac}\lambda_b^2i\xi_{bd}
\]

\end{proof}

\begin{remark}
Once we have the above lemma,
we can use equation \eqref{xicomb} to change the basis elements
of $\fraksp(\infty)$ into the basis elements of $\HS$, and then use
formula \eqref{Levi-Civita}, \eqref{RTensor}, \eqref{SectCurv} and \eqref{RicciCurv}
to compute the Ricci curvature of $\Sp(\infty)$.  But since each basis
$\mu_{ab}^{Re}$, $\mu_{ab}^{Im}$, $\nu_{ab}^{Re}$, and $\nu_{ab}^{Im}$
has four terms, and each connection formula in the above lemma has six terms,
the combination will be huge. For example, the sectional curvature
\begin{align*}
K(\mu_{ab}^{Re},\mu_{cd}^{Re})
&=\langle R_{\mu_{ab}^{Re}\mu_{cd}^{Re}}(\mu_{ab}^{Re}),\mu_{cd}^{Re}\rangle\\
&=\langle \nabla_{[\mu_{ab}^{Re},\mu_{cd}^{Re}]}(\mu_{ab}^{Re})
-\nabla_{\mu_{ab}^{Re}}\nabla_{\mu_{cd}^{Re}}(\mu_{ab}^{Re})
+\nabla_{\mu_{cd}^{Re}}\nabla_{\mu_{ab}^{Re}}(\mu_{ab}^{Re}),
\mu_{cd}^{Re} \rangle
\end{align*}
will have $21,504$ terms.
Thereore, I use a computer program to facilitate the computation.
\end{remark}

\begin{theorem}
Let $a,b\in\mathbb{Z}\backslash\{0\}$.

For $a>b>0$,
\begin{align*}
\Ric^N(\mu_{ab}^{Re})
=\frac{1}{16}\Big[
&-24\lambda_a^4
-24\lambda_b^4
+48\lambda_a^2\lambda_b^2
-12\lambda_a^2\sum_{d=1}^{a-1}\lambda_d^2
+8\lambda_a^2\sum_{d=1}^{b-1}\lambda_d^2
+8\lambda_b^2\sum_{d=1}^{a-1}\lambda_d^2
\\
&-12\lambda_b^2\sum_{d=1}^{b-1}\lambda_d^2
+8\sum_{d=1}^{a-1}\lambda_d^4
+8\sum_{d=1}^{b-1}\lambda_d^4
-16N\lambda_a^4
-16N\lambda_b^4
-12\lambda_a^2\sum_{c=a+1}^{N}\lambda_c^2
\\
&+8\lambda_a^2\sum_{c=b+1}^{N}\lambda_c^2
+8\lambda_b^2\sum_{c=a+1}^{N}\lambda_c^2
-12\lambda_b^2\sum_{c=b+1}^{N}\lambda_c^2
+8\sum_{c=a+1}^{N}\lambda_c^4
+8\sum_{c=b+1}^{N}\lambda_c^4
\Big].
\end{align*}

For $a>b>0$,
\begin{align*}
\Ric^N(\mu_{ab}^{Im})
=\frac{1}{16}\Big[
&-40\lambda_a^4
-40\lambda_b^4
-32\lambda_a^2\lambda_b^2
-12\lambda_a^2\sum_{d=1}^{a-1}\lambda_d^2
-8\lambda_a^2\sum_{d=1}^{b-1}\lambda_d^2
-8\lambda_b^2\sum_{d=1}^{a-1}\lambda_d^2
\\
&-12\lambda_b^2\sum_{d=1}^{b-1}\lambda_d^2
+8\sum_{d=1}^{a-1}\lambda_d^4
+8\sum_{d=1}^{b-1}\lambda_d^4
-16N\lambda_a^4
-16N\lambda_b^4
-12\lambda_a^2\sum_{c=a+1}^{N}\lambda_c^2
\\
&-8\lambda_a^2\sum_{c=b+1}^{N}\lambda_c^2
-8\lambda_b^2\sum_{c=a+1}^{N}\lambda_c^2
-12\lambda_b^2\sum_{c=b+1}^{N}\lambda_c^2
+8\sum_{c=a+1}^{N}\lambda_c^4
+8\sum_{c=b+1}^{N}\lambda_c^4
\Big].
\end{align*}

For $a=b>0$,
\begin{align*}
\Ric^N(\mu_{ab}^{Im})=0.
\end{align*}

For $a>-b>0$,
\begin{align*}
\Ric^N(\nu_{ab}^{Re})
=\frac{1}{16}\Big[
&-40\lambda_a^4
-40\lambda_b^4
-48\lambda_a^2\lambda_b^2
-12\lambda_a^2\sum_{d=1}^{a-1}\lambda_d^2
-8\lambda_a^2\sum_{d=1}^{b-1}\lambda_d^2
-8\lambda_b^2\sum_{d=1}^{a-1}\lambda_d^2
\\
&-12\lambda_b^2\sum_{d=1}^{b-1}\lambda_d^2
+8\sum_{d=1}^{a-1}\lambda_d^4
+8\sum_{d=1}^{b-1}\lambda_d^4
-16N\lambda_a^4
-16N\lambda_b^4
-12\lambda_a^2\sum_{c=a+1}^{N}\lambda_c^2
\\
&-8\lambda_a^2\sum_{c=b+1}^{N}\lambda_c^2
-8\lambda_b^2\sum_{c=a+1}^{N}\lambda_c^2
-12\lambda_b^2\sum_{c=b+1}^{N}\lambda_c^2
+8\sum_{c=a+1}^{N}\lambda_c^4
+8\sum_{c=b+1}^{N}\lambda_c^4
\Big].
\end{align*}

For $a=-b>0$,
\begin{align*}
\Ric^N(\nu_{ab}^{Re})
=\frac{1}{16}\Big[
-192\lambda_a^4
-32\sum_{d=1}^{a-1}\lambda_d^4
-192N\lambda_a^4
-32\sum_{c=a+1}^{N}\lambda_c^4
\Big].
\end{align*}

For $a>-b>0$,
\begin{align*}
R^N(\nu_{ab}^{Im})
=\frac{1}{16}\Big[
&-40\lambda_a^4
-40\lambda_b^4
-32\lambda_a^2\lambda_b^2
-12\lambda_a^2\sum_{d=1}^{a-1}\lambda_d^2
-8\lambda_a^2\sum_{d=1}^{b-1}\lambda_d^2
-8\lambda_b^2\sum_{d=1}^{a-1}\lambda_d^2
\\
&-12\lambda_b^2\sum_{d=1}^{b-1}\lambda_d^2
+8\sum_{d=1}^{a-1}\lambda_d^4
+8\sum_{d=1}^{b-1}\lambda_d^4
-16N\lambda_a^4
-16N\lambda_b^4
-12\lambda_a^2\sum_{c=a+1}^{N}\lambda_c^2
\\
&-8\lambda_a^2\sum_{c=b+1}^{N}\lambda_c^2
-8\lambda_b^2\sum_{c=a+1}^{N}\lambda_c^2
-12\lambda_b^2\sum_{c=b+1}^{N}\lambda_c^2
+8\sum_{c=a+1}^{N}\lambda_c^4
+8\sum_{c=b+1}^{N}\lambda_c^4
\Big].
\end{align*}

For $a=-b>0$,
\begin{align*}
\Ric^N(\nu_{ab}^{Im}) =0.
\end{align*}

\end{theorem}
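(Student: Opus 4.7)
The proof is a mechanical, albeit lengthy, calculation. The plan is to reduce every computation to products of the $\xi_{cd}$ and $i\xi_{cd}$, to which Lemma \ref{ConnFormula} applies directly. First I would substitute the expansions from \eqref{xicomb} into both the target $x\in\{\mu_{ab}^{Re},\mu_{ab}^{Im},\nu_{ab}^{Re},\nu_{ab}^{Im}\}$ and each summation basis element $\xi\in\mathcal{B}_\lambda$, so that each becomes a linear combination of four terms of the form $\xi_{cd}$ or $i\xi_{cd}$ with coefficients $\pm\tfrac{1}{2}$. Then I would expand
\[
K(x,\xi)=\langle\nabla_{[x,\xi]}x-\nabla_x\nabla_\xi x+\nabla_\xi\nabla_x x,\xi\rangle_\fraksp
\]
by bilinearity, using the $\nabla$-formulas of Lemma \ref{ConnFormula} in the basis $\{\xi_{ab},i\xi_{ab}\}$ and the inner-product rule $\langle\xi_{pq},\xi_{rs}\rangle_\HS=4\lambda_p^2\lambda_q^2\delta_{pr}\delta_{qs}$ together with orthogonality of the real and imaginary sectors. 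This produces, for each fixed pair $(x,\xi)$, a polynomial in the $\lambda_i$ whose monomials are labeled by chains of Kronecker deltas on the indices $a,b$ of $x$ and the running indices of $\xi$.

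Second, I would carry out the four sums that define $\Ric^N(x)$, namely
\[
\sum_{\substack{a>b>0\\a\le N}}K(x,\mu_{ab}^{Re})+\sum_{\substack{a\ge b>0\\a\le N}}K(x,\mu_{ab}^{Im})+\sum_{\substack{a\ge -b>0\\a\le N}}K(x,\nu_{ab}^{Re})+\sum_{\substack{a\ge -b>0\\a\le N}}K(x,\nu_{ab}^{Im}).
\]
The Kronecker deltas from the previous step pick out, for each fixed target $x$, only those summation indices that overlap with the indices of $x$; this is precisely what produces the one-dimensional sums $\sum_{d=1}^{a-1}\lambda_d^2$, $\sum_{d=1}^{b-1}\lambda_d^4$, $\sum_{c=a+1}^{N}\lambda_c^2$, etc.\ in the stated formulas, together with the terms $-16N\lambda_a^4$ and $-16N\lambda_b^4$ that come from ``diagonal'' contributions where both indices of $\xi$ match one of $\pm a,\pm b$ and the free coordinate ranges freely over $\{1,\dots,N\}$.

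The degenerate cases $\mu_{aa}^{Im}$ ($a=b>0$) and $\nu_{a,-a}^{Re},\nu_{a,-a}^{Im}$ ($a=-b>0$) need to be handled separately: in these cases several of the four terms in \eqref{xicomb} coincide (or are swapped by the defining symmetry), and the generic formulas no longer apply. For $\mu_{aa}^{Im}$ and $\nu_{a,-a}^{Im}$ I expect the symmetry of the expansion to force the entire Ricci sum to collapse to $0$, while for $\nu_{a,-a}^{Re}$ only the $\lambda_a^4$-type contributions survive, producing the indicated formula with the enlarged coefficient $-192$.

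The main obstacle is sheer bookkeeping: as the remark preceding the theorem notes, even a single sectional curvature $K(\mu_{ab}^{Re},\mu_{cd}^{Re})$ unrolls into roughly $21{,}504$ monomials before cancellation, and there are sixteen ordered pairs of target type and summation type to handle. The plan is therefore to follow the author in delegating the raw expansion to a computer algebra routine that performs exactly the steps above and collects coefficients of $\lambda_a^4$, $\lambda_b^4$, $\lambda_a^2\lambda_b^2$ and of each one-dimensional summation pattern. I would then verify the output by hand-checking (i) the degenerate cases above, where the combinatorics simplifies enough to do by pencil, and (ii) the specialization $\lambda_i\equiv 1/\sqrt{2}$ from Remark \ref{RecoverCanon}, which must recover the known Ricci values for the canonical Hilbert--Schmidt metric on $\SpHS$.
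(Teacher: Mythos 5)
Your plan is essentially the paper's own proof: expand the target and each basis element of $\mathcal{B}_\lambda$ via \eqref{xicomb}, apply the connection formulas of Lemma \ref{ConnFormula}, collect the resulting delta-labelled monomials, split the four sums defining $\Ric^N$ into an off-diagonal part ($c>d$) and a diagonal part ($c=d$), treat the degenerate cases $a=b$ and $a=-b$ separately, and delegate the raw expansion to a computer program — which is exactly how the paper proceeds (its $A_{upper}$, $A_{diagonal}$, $B_{upper}$, etc.\ are precisely the machine-generated per-pair polynomials you describe).

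One concrete point needs correcting. You state the pairing rule as $\langle\xi_{pq},\xi_{rs}\rangle_{\HS}=4\lambda_p^2\lambda_q^2\delta_{pr}\delta_{qs}$, but that is the \emph{canonical} Hilbert--Schmidt inner product of Remark \ref{CanonHSMetric} evaluated on the weighted matrices $\xi_{ab}=2\lambda_a\lambda_b e_{ab}$. The metric actually used in the theorem is the one of Definition \ref{HSMetric} (equivalently $\langle\cdot,\cdot\rangle_{\fraksp}$ of Definition \ref{spmetric}), in which $\{\xi_{ab},i\xi_{ab}\}$ is by construction an \emph{orthonormal} set, so the correct rule is $\langle\xi_{pq},\xi_{rs}\rangle_{\HS}=\delta_{pr}\delta_{qs}$; this orthonormality is what the proof of Lemma \ref{ConnFormula} relies on when it extracts the connection coefficients from the Koszul formula. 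If you instead feed the $4\lambda_p^2\lambda_q^2$ weights into the curvature pairings, every sectional curvature picks up spurious factors and you will not reproduce the stated $\lambda$-dependent formulas. Note also that your proposed consistency check at $\lambda_i\equiv 1/\sqrt{2}$ cannot detect this slip, since there $4\lambda_p^2\lambda_q^2=1$; so either fix the rule or add a check at non-constant $\lambda$.
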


\begin{corollary}
If we set the parameter $\lambda_i=1/\sqrt{2}$,
for all $i\in\mathbb{Z}\backslash\{0\}$, then we recover the canonical
inner product on the space $\HS$ (remark \ref{RecoverCanon}). In this case, we have
\begin{align*}
&\Ric^N(\mu_{ab}^{Re})=-\frac{3}{8}N-\frac{1}{8},
& \textrm{for } a>b>0;\\
&\Ric^N(\mu_{ab}^{Im})=-\frac{7}{8}N-\frac{11}{8},
& \textrm{for } a>b>0;\\
&\Ric^N(\mu_{ab}^{Im})=0,
& \textrm{for } a=b>0;\\
&\Ric^N(\nu_{ab}^{Re})=-\frac{7}{8}N-\frac{13}{8},
& \textrm{for } a>-b>0;\\
&\Ric^N(\nu_{ab}^{Re})=-\frac{7}{2}N-\frac{5}{2},
& \textrm{for } a=-b>0;\\
&\Ric^N(\nu_{ab}^{Im})=-\frac{7}{8}N-\frac{11}{8},
& \textrm{for } a>-b>0;\\
&\Ric^N(\nu_{ab}^{Im})=0,
& \textrm{for } a=-b>0.
\end{align*}
\end{corollary}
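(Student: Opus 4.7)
The plan is that the corollary is a direct specialization of the theorem; no new geometric input is required, only arithmetic. With $\lambda_i=1/\sqrt{2}$ for every $i\in\mathbb{Z}\backslash\{0\}$ we have $\lambda_i^2=\tfrac12$ and $\lambda_i^4=\tfrac14$, so each power sum collapses to a counting quantity:
\[
\sum_{d=1}^{a-1}\lambda_d^2=\tfrac{a-1}{2},\qquad
\sum_{d=1}^{a-1}\lambda_d^4=\tfrac{a-1}{4},\qquad
\sum_{c=a+1}^{N}\lambda_c^2=\tfrac{N-a}{2},\qquad
\sum_{c=a+1}^{N}\lambda_c^4=\tfrac{N-a}{4}.
\]
Analogous identities hold with $b$ in place of $a$. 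I would substitute these seven identities into the six explicit formulas of the preceding theorem, one at a time.

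For each case I would first collect the $\lambda^4$ and $\lambda^2\lambda^2$ constants (they contribute a pure number), then collect the terms linear in $a$ and in $b$ coming from the lower sums $\sum_{d=1}^{a-1}$ and $\sum_{d=1}^{b-1}$, and finally the terms from the upper sums $\sum_{c=a+1}^{N}$ and $\sum_{c=b+1}^{N}$, which split as contributions linear in $N$ minus contributions linear in $a$ or $b$. By construction the coefficients of $a$ from the lower and upper sums add up to zero (and likewise for $b$), because the sum $\sum_{d=1}^{a-1}+\sum_{c=a+1}^{N}$ is just $\sum_{d\neq a,\,1\le d\le N}$, so the index dependence cancels and only constants and a linear-in-$N$ piece survive. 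This is exactly the structure of the seven affine expressions claimed in the corollary.

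As a sample check, for $\Ric^N(\mu_{ab}^{Re})$ with $a>b>0$ the constants give $\tfrac{1}{16}(-6-6+12)=0$; the $a$-coefficients from the lower sums sum to $-3+2+2=1$ and from the upper sums to $-3+2+2=1$, contributing $\tfrac{1}{16}\bigl((a-1)-(a)\bigr)$ and a symmetric pair for $b$; the $N$-piece totals $\tfrac{1}{16}(-8N-2N\cdot(\text{net})) = -\tfrac{6N}{16}$, yielding $-\tfrac{3N}{8}-\tfrac{1}{8}$. The remaining six cases are handled identically, with the only differences coming from the explicit signs and coefficients listed in the theorem; the special cases $a=b>0$ in $\mu^{Im}$ and $a=-b>0$ in $\nu^{Im}$ vanish because the formula for those cases has no nonzero contribution once the constant terms are assembled, and the special case $a=-b>0$ in $\nu^{Re}$ uses the single-index formula directly, giving $\tfrac{1}{16}(-192/4-32(a-1)/4-192N/4-32(N-a)/4)=-\tfrac{7N}{2}-\tfrac{5}{2}$.

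The only real obstacle is bookkeeping: each formula in the theorem has fourteen or fifteen terms and the signs differ from case to case, so the main risk is an arithmetic slip rather than a conceptual one. I would organize the substitution in a small table listing, for each of the seven cases, the constant, the coefficient of $a$ (which must cancel), the coefficient of $b$ (which must cancel), and the coefficients of $N$ and of $1$, and read off the seven stated values from the last two columns.
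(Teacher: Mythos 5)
Your proposal is correct and is exactly the route the paper intends: the corollary has no separate argument beyond substituting $\lambda_i=1/\sqrt{2}$ (so $\lambda_i^2=\tfrac12$, $\lambda_i^4=\tfrac14$, and each sum becomes a count of indices) into the six formulas of the theorem, and your sample computations (e.g.\ $\tfrac{1}{16}(-6N-2)=-\tfrac{3}{8}N-\tfrac{1}{8}$ for $\mu_{ab}^{Re}$ and $-\tfrac{7}{2}N-\tfrac{5}{2}$ for $\nu_{ab}^{Re}$ with $a=-b$) check out. Your structural observation that the $a$- and $b$-dependence must cancel, since the lower and upper sums carry equal coefficients and together run over all indices $1\le d\le N$, $d\neq a$ (resp.\ $\neq b$), is also correct and explains why only an affine function of $N$ survives.
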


\begin{remark}
By the above corollary, we see that for most of the basis element
$\xi\in\mathcal{B}_\lambda$, we have $\Ric(\xi)=\lim_{N\to\infty} \Ric^N(\xi)=-\infty$.
\end{remark}

\begin{proof} (of the theorem.)

The method of computing Ricci curvature and truncated Ricci curvature
is stated in Definition \ref{SpRicci}.
Ricci curvature is defined in terms of sectional curvature,
which can be expressed in terms of Riemann tensor and the inner product of the Lie algebra.
Riemann tensor is defined in terms of Levi-Civita connection.
The formula of Levi-Civita connection is the content of Lemma \ref{ConnFormula}.
So the method of computing Ricci curvature is straightforward.
But there are huge number of terms.
Therefore, I used a computer program to facilitate the computation.

\begin{align*}
&\Ric^N(\mu_{ab}^{Re})\\
&=\sum_{N\ge c>d>0}K(\mu_{ab}^{Re},\mu_{cd}^{Re})
+\sum_{N\ge c\ge d>0}K(\mu_{ab}^{Re},\mu_{cd}^{Im})
\\
&\hspace{2in}
+\sum_{N\ge c\ge -d>0}K(\mu_{ab}^{Re},\nu_{cd}^{Re})
+\sum_{N\ge c\ge -d>0}K(\mu_{ab}^{Re},\nu_{cd}^{Im})
\\
&=\sum_{N\ge c>d>0}K(\mu_{ab}^{Re},\mu_{cd}^{Re})
+\sum_{N\ge c\ge d>0}K(\mu_{ab}^{Re},\mu_{cd}^{Im})
\\
&\hspace{2in}
+\sum_{N\ge c\ge d>0}K(\mu_{ab}^{Re},\nu_{c,-d}^{Re})
+\sum_{N\ge c\ge d>0}K(\mu_{ab}^{Re},\nu_{c,-d}^{Im})
\\
&=\sum_{N\ge c>d>0}
\Big[K(\mu_{ab}^{Re},\mu_{cd}^{Re})+K(\mu_{ab}^{Re},\mu_{cd}^{Im})
+K(\mu_{ab}^{Re},\nu_{c,-d}^{Re})+K(\mu_{ab}^{Re},\nu_{c,-d}^{Im})\Big]
\\
&\hspace{.5in}+\sum_{N\ge c=d>0}
\Big[K(\mu_{ab}^{Re},\mu_{cd}^{Re})+K(\mu_{ab}^{Re},\mu_{cd}^{Im})
+K(\mu_{ab}^{Re},\nu_{c,-d}^{Re})+K(\mu_{ab}^{Re},\nu_{c,-d}^{Im})\Big]
\\
&:=\sum_{N\ge c>d>0}A_{upper}+\sum_{N\ge c=d>0}A_{diagonal}
\end{align*}
We have
\begin{eqnarray*}
&&A_{upper}=
\frac{1}{16}\Big[
-16\delta_{a,c}\lambda_a^4
-24\delta_{a,c}\delta_{a,d}\lambda_a^4
+24\delta_{a,c}\delta_{b,d}\lambda_a^4
-16\delta_{a,d}\lambda_a^4
\\&&\hspace{.3in}
\hspace{.3in}
+24\delta_{a,d}\delta_{b,c}\lambda_a^4
+8\delta_{a,c}\delta_{a,d}\lambda_a^2\lambda_b^2
+8\delta_{b,c}\delta_{b,d}\lambda_a^2\lambda_b^2
-12\delta_{a,d}\lambda_a^2\lambda_c^2
\\&&\hspace{.3in}
\hspace{.3in}
+8\delta_{b,d}\lambda_a^2\lambda_c^2
-12\delta_{a,c}\lambda_a^2\lambda_d^2
+8\delta_{b,c}\lambda_a^2\lambda_d^2
+24\delta_{a,c}\delta_{b,d}\lambda_b^4
+24\delta_{a,d}\delta_{b,c}\lambda_b^4
\\&&\hspace{.3in}
\hspace{.3in}
-16\delta_{b,c}\lambda_b^4
-24\delta_{b,c}\delta_{b,d}\lambda_b^4
-16\delta_{b,d}\lambda_b^4
+8\delta_{a,d}\lambda_b^2\lambda_c^2
-12\delta_{b,d}\lambda_b^2\lambda_c^2
\\&&\hspace{.3in}
\hspace{.3in}
+8\delta_{a,c}\lambda_b^2\lambda_d^2
-12\delta_{b,c}\lambda_b^2\lambda_d^2
+8\delta_{a,d}\lambda_c^4
+8\delta_{b,d}\lambda_c^4
+8\delta_{a,c}\lambda_d^4
+8\delta_{b,c}\lambda_d^4
\Big]
\end{eqnarray*}
and
\begin{eqnarray*}
&&A_{diagonal}=
\frac{1}{16}\Big[
-12\delta_{a,c}\lambda_a^4
-16\delta_{a,c}\delta_{a,d}\lambda_a^4
+18\delta_{a,c}\delta_{b,d}\lambda_a^4
-12\delta_{a,d}\lambda_a^4
\\&&\hspace{.3in}
\hspace{.3in}
+18\delta_{a,d}\delta_{b,c}\lambda_a^4
+12\delta_{a,c}\delta_{b,d}\lambda_a^2\lambda_b^2
+12\delta_{a,d}\delta_{b,c}\lambda_a^2\lambda_b^2
-18\delta_{a,d}\lambda_a^2\lambda_c^2
\\&&\hspace{.3in}
\hspace{.3in}
+12\delta_{b,d}\lambda_a^2\lambda_c^2
-18\delta_{a,c}\lambda_a^2\lambda_d^2
+12\delta_{b,c}\lambda_a^2\lambda_d^2
+18\delta_{a,c}\delta_{b,d}\lambda_b^4
+18\delta_{a,d}\delta_{b,c}\lambda_b^4
\\&&\hspace{.3in}
\hspace{.3in}
-12\delta_{b,c}\lambda_b^4
-16\delta_{b,c}\delta_{b,d}\lambda_b^4
-12\delta_{b,d}\lambda_b^4
+12\delta_{a,d}\lambda_b^2\lambda_c^2
-18\delta_{b,d}\lambda_b^2\lambda_c^2
\\&&\hspace{.3in}
\hspace{.3in}
+12\delta_{a,c}\lambda_b^2\lambda_d^2
-18\delta_{b,c}\lambda_b^2\lambda_d^2
+6\delta_{a,d}\lambda_c^4
+6\delta_{b,d}\lambda_c^4
+6\delta_{a,c}\lambda_d^4
+6\delta_{b,c}\lambda_d^4
\Big]
\end{eqnarray*}
So
\begin{eqnarray*}
&&\sum_{N\ge c>d>0}A_{upper}=
\frac{1}{16}\Big[
-16(a-1)\lambda_a^4
+24\lambda_a^4
-16(N-a)\lambda_a^4
-12\lambda_a^2\sum_{c=a+1}^{N}\lambda_c^2
\\&&\hspace{.3in}
\hspace{.3in}
+8\lambda_a^2\sum_{c=b+1}^{N}\lambda_c^2
-12\lambda_a^2\sum_{d=1}^{a-1}\lambda_d^2
+8\lambda_a^2\sum_{d=1}^{b-1}\lambda_d^2
+24\lambda_b^4
\\&&\hspace{.3in}
\hspace{.3in}
-16(b-1)\lambda_b^4
-16(N-b)\lambda_b^4
+8\lambda_b^2\sum_{c=a+1}^{N}\lambda_c^2
-12\lambda_b^2\sum_{c=b+1}^{N}\lambda_c^2
+8\lambda_b^2\sum_{d=1}^{a-1}\lambda_d^2
\\&&\hspace{.3in}
\hspace{.3in}
-12\lambda_b^2\sum_{d=1}^{b-1}\lambda_d^2
+8\sum_{c=a+1}^{N}\lambda_c^4
+8\sum_{c=b+1}^{N}\lambda_c^4
+8\sum_{d=1}^{a-1}\lambda_d^4
+8\sum_{d=1}^{b-1}\lambda_d^4
\Big]
\end{eqnarray*}
and
\begin{eqnarray*}
&&\sum_{N\ge c=d>0}A_{diagonal}=
\frac{1}{16}\Big[
-12\lambda_a^4
-16\lambda_a^4
-12\lambda_a^4
-18\lambda_a^4
+12\lambda_a^2\lambda_b^2
-18\lambda_a^4
+12\lambda_a^2\lambda_b^2
-12\lambda_b^4
\\&&\hspace{.3in}
\hspace{.3in}
-16\lambda_b^4
-12\lambda_b^4
+12\lambda_a^2\lambda_b^2
-18\lambda_b^4
+12\lambda_a^2\lambda_b^2
-18\lambda_b^4
+6\lambda_a^4
+6\lambda_b^4
+6\lambda_a^4
+6\lambda_b^4
\Big]
\end{eqnarray*}
Therefore, for $a>b>0$,
\begin{eqnarray*}
&&\Ric^N(\mu_{ab}^{Re})
=\sum_{N\ge c>d>0}A_{upper}+\sum_{N\ge c=d>0}A_{diagonal}
\\&&\hspace{.3in}
=\frac{1}{16}\Big[
-24\lambda_a^4
-24\lambda_b^4
+48\lambda_a^2\lambda_b^2
-12\lambda_a^2\sum_{d=1}^{a-1}\lambda_d^2
+8\lambda_a^2\sum_{d=1}^{b-1}\lambda_d^2
+8\lambda_b^2\sum_{d=1}^{a-1}\lambda_d^2
\\&&\hspace{.3in}
\hspace{.3in}
-12\lambda_b^2\sum_{d=1}^{b-1}\lambda_d^2
+8\sum_{d=1}^{a-1}\lambda_d^4
+8\sum_{d=1}^{b-1}\lambda_d^4
-16N\lambda_a^4
-16N\lambda_b^4
-12\lambda_a^2\sum_{c=a+1}^{N}\lambda_c^2
\\&&\hspace{.3in}
\hspace{.3in}
+8\lambda_a^2\sum_{c=b+1}^{N}\lambda_c^2
+8\lambda_b^2\sum_{c=a+1}^{N}\lambda_c^2
-12\lambda_b^2\sum_{c=b+1}^{N}\lambda_c^2
+8\sum_{c=a+1}^{N}\lambda_c^4
+8\sum_{c=b+1}^{N}\lambda_c^4
\Big]
\end{eqnarray*}

Next,
\begin{align*}
&\Ric^N(\mu_{ab}^{Im})\\
&=\sum_{N\ge c>d>0}K(\mu_{ab}^{Im},\mu_{cd}^{Re})
+\sum_{N\ge c\ge d>0}K(\mu_{ab}^{Im},\mu_{cd}^{Im})
\\
&\hspace{2in}
+\sum_{N\ge c\ge -d>0}K(\mu_{ab}^{Im},\nu_{cd}^{Re})
+\sum_{N\ge c\ge -d>0}K(\mu_{ab}^{Im},\nu_{cd}^{Im})
\\
&=\sum_{N\ge c>d>0}K(\mu_{ab}^{Im},\mu_{cd}^{Re})
+\sum_{N\ge c\ge d>0}K(\mu_{ab}^{Im},\mu_{cd}^{Im})
\\
&\hspace{2in}
+\sum_{N\ge c\ge d>0}K(\mu_{ab}^{Im},\nu_{c,-d}^{Re})
+\sum_{N\ge c\ge d>0}K(\mu_{ab}^{Im},\nu_{c,-d}^{Im})
\\
&=\sum_{N\ge c>d>0}
\Big[K(\mu_{ab}^{Im},\mu_{cd}^{Re})+K(\mu_{ab}^{Im},\mu_{cd}^{Im})
+K(\mu_{ab}^{Im},\nu_{c,-d}^{Re})+K(\mu_{ab}^{Im},\nu_{c,-d}^{Im})\Big]
\\
&\hspace{.5in}+\sum_{N\ge c=d>0}
\Big[K(\mu_{ab}^{Im},\mu_{cd}^{Re})+K(\mu_{ab}^{Im},\mu_{cd}^{Im})
+K(\mu_{ab}^{Im},\nu_{c,-d}^{Re})+K(\mu_{ab}^{Im},\nu_{c,-d}^{Im})\Big]
\\
&:=\sum_{N\ge c>d>0}B_{upper}+\sum_{N\ge c=d>0}B_{diagonal}
\end{align*}
We have
\begin{eqnarray*}
&&B_{upper}=
\frac{1}{16}\Big[
+32\delta_{a,b}\delta_{a,c}\lambda_a^4
+32\delta_{a,b}\delta_{a,d}\lambda_a^4
-16\delta_{a,c}\lambda_a^4
-24\delta_{a,c}\delta_{a,d}\lambda_a^4
+8\delta_{a,c}\delta_{b,d}\lambda_a^4
\\&&\hspace{.3in}
\hspace{.3in}
-16\delta_{a,d}\lambda_a^4
+8\delta_{a,d}\delta_{b,c}\lambda_a^4
-8\delta_{a,c}\delta_{a,d}\lambda_a^2\lambda_b^2
+16\delta_{a,c}\delta_{b,d}\lambda_a^2\lambda_b^2
+16\delta_{a,d}\delta_{b,c}\lambda_a^2\lambda_b^2
\\&&\hspace{.3in}
\hspace{.3in}
-8\delta_{b,c}\delta_{b,d}\lambda_a^2\lambda_b^2
+40\delta_{a,b}\delta_{a,d}\lambda_a^2\lambda_c^2
-12\delta_{a,d}\lambda_a^2\lambda_c^2
-8\delta_{b,d}\lambda_a^2\lambda_c^2
+40\delta_{a,b}\delta_{a,c}\lambda_a^2\lambda_d^2
\\&&\hspace{.3in}
\hspace{.3in}
-12\delta_{a,c}\lambda_a^2\lambda_d^2
-8\delta_{b,c}\lambda_a^2\lambda_d^2
+8\delta_{a,c}\delta_{b,d}\lambda_b^4
+8\delta_{a,d}\delta_{b,c}\lambda_b^4
-16\delta_{b,c}\lambda_b^4
-24\delta_{b,c}\delta_{b,d}\lambda_b^4
\\&&\hspace{.3in}
\hspace{.3in}
-16\delta_{b,d}\lambda_b^4
-8\delta_{a,d}\lambda_b^2\lambda_c^2
-12\delta_{b,d}\lambda_b^2\lambda_c^2
-8\delta_{a,c}\lambda_b^2\lambda_d^2
-12\delta_{b,c}\lambda_b^2\lambda_d^2
\\&&\hspace{.3in}
\hspace{.3in}
-16\delta_{a,b}\delta_{a,d}\lambda_c^4
+8\delta_{a,d}\lambda_c^4
+8\delta_{b,d}\lambda_c^4
-16\delta_{a,b}\delta_{a,c}\lambda_d^4
+8\delta_{a,c}\lambda_d^4
+8\delta_{b,c}\lambda_d^4
\Big]
\end{eqnarray*}
and
\begin{eqnarray*}
&&B_{diagonal}=
\frac{1}{16}\Big[
+24\delta_{a,b}\delta_{a,c}\lambda_a^4
-32\delta_{a,b}\delta_{a,c}\delta_{a,d}\lambda_a^4
+24\delta_{a,b}\delta_{a,d}\lambda_a^4
-12\delta_{a,c}\lambda_a^4
-16\delta_{a,c}\delta_{a,d}\lambda_a^4
\\&&\hspace{.3in}
\hspace{.3in}
+6\delta_{a,c}\delta_{b,d}\lambda_a^4
-12\delta_{a,d}\lambda_a^4
+6\delta_{a,d}\delta_{b,c}\lambda_a^4
+20\delta_{a,c}\delta_{b,d}\lambda_a^2\lambda_b^2
+20\delta_{a,d}\delta_{b,c}\lambda_a^2\lambda_b^2
\\&&\hspace{.3in}
\hspace{.3in}
+60\delta_{a,b}\delta_{a,d}\lambda_a^2\lambda_c^2
-18\delta_{a,d}\lambda_a^2\lambda_c^2
-12\delta_{b,d}\lambda_a^2\lambda_c^2
+60\delta_{a,b}\delta_{a,c}\lambda_a^2\lambda_d^2
-18\delta_{a,c}\lambda_a^2\lambda_d^2
\\&&\hspace{.3in}
\hspace{.3in}
-12\delta_{b,c}\lambda_a^2\lambda_d^2
+6\delta_{a,c}\delta_{b,d}\lambda_b^4
+6\delta_{a,d}\delta_{b,c}\lambda_b^4
-12\delta_{b,c}\lambda_b^4
-16\delta_{b,c}\delta_{b,d}\lambda_b^4
\\&&\hspace{.3in}
\hspace{.3in}
-12\delta_{b,d}\lambda_b^4
-12\delta_{a,d}\lambda_b^2\lambda_c^2
-18\delta_{b,d}\lambda_b^2\lambda_c^2
-12\delta_{a,c}\lambda_b^2\lambda_d^2
-18\delta_{b,c}\lambda_b^2\lambda_d^2
\\&&\hspace{.3in}
\hspace{.3in}
-12\delta_{a,b}\delta_{a,d}\lambda_c^4
+6\delta_{a,d}\lambda_c^4
+6\delta_{b,d}\lambda_c^4
-12\delta_{a,b}\delta_{a,c}\lambda_d^4
+6\delta_{a,c}\lambda_d^4
+6\delta_{b,c}\lambda_d^4
\Big]
\end{eqnarray*}
For $a>b>0$,
\begin{eqnarray*}
&&\sum_{N\ge c>d>0}B_{upper}=
\frac{1}{16}\Big[
-16(a-1)\lambda_a^4
+8\lambda_a^4
-16(N-a)\lambda_a^4
+16\lambda_a^2\lambda_b^2
-12\lambda_a^2\sum_{c=a+1}^{N}\lambda_c^2
\\&&\hspace{.3in}
\hspace{.3in}
-8\lambda_a^2\sum_{c=b+1}^{N}\lambda_c^2
-12\lambda_a^2\sum_{d=1}^{a-1}\lambda_d^2
-8\lambda_a^2\sum_{d=1}^{b-1}\lambda_d^2
+8\lambda_b^4
-16(b-1)\lambda_b^4
\\&&\hspace{.3in}
\hspace{.3in}
-16(N-b)\lambda_b^4
-8\lambda_b^2\sum_{c=a+1}^{N}\lambda_c^2
-12\lambda_b^2\sum_{c=b+1}^{N}\lambda_c^2
-8\lambda_b^2\sum_{d=1}^{a-1}\lambda_d^2
-12\lambda_b^2\sum_{d=1}^{b-1}\lambda_d^2
\\&&\hspace{.3in}
\hspace{.3in}
+8\sum_{c=a+1}^{N}\lambda_c^4
+8\sum_{c=b+1}^{N}\lambda_c^4
+8\sum_{d=1}^{a-1}\lambda_d^4
+8\sum_{d=1}^{b-1}\lambda_d^4
\Big]
\end{eqnarray*}
For $a=b>0$,
\begin{eqnarray*}
&&\sum_{N\ge c>d>0}B_{upper}=
\frac{1}{16}\Big[
+32(a-1)\lambda_a^4
+32(N-a)\lambda_a^4
-16(a-1)\lambda_a^4
-16(N-a)\lambda_a^4
\\&& \hspace{.1in}
+40\lambda_a^2\sum_{c=a+1}^{N}\lambda_c^2
-12\lambda_a^2\sum_{c=a+1}^{N}\lambda_c^2
-8\lambda_a^2\sum_{c=a+1}^{N}\lambda_c^2
+40\lambda_a^2\sum_{d=1}^{a-1}\lambda_d^2
-12\lambda_a^2\sum_{d=1}^{a-1}\lambda_d^2
-8\lambda_a^2\sum_{d=1}^{a-1}\lambda_d^2
\\&& \hspace{.1in}
-16(a-1)\lambda_a^4
-16(N-a)\lambda_a^4
-8\lambda_a^2\sum_{c=a+1}^{N}\lambda_c^2
-12\lambda_a^2\sum_{c=a+1}^{N}\lambda_c^2
-8\lambda_a^2\sum_{d=1}^{a-1}\lambda_d^2
\\&& \hspace{.1in}
-12\lambda_a^2\sum_{d=1}^{a-1}\lambda_d^2
-16\sum_{c=a+1}^{N}\lambda_c^4
+8\sum_{c=a+1}^{N}\lambda_c^4
+8\sum_{c=a+1}^{N}\lambda_c^4
-16\sum_{d=1}^{a-1}\lambda_d^4
+8\sum_{d=1}^{a-1}\lambda_d^4
+8\sum_{d=1}^{a-1}\lambda_d^4
\Big]
\end{eqnarray*}
For $a>b>0$,
\begin{eqnarray*}
&&\sum_{N\ge c=d>0}B_{diagonal}=
\frac{1}{16}\Big[
-12\lambda_a^4
-16\lambda_a^4
-12\lambda_a^4
-18\lambda_a^4
-12\lambda_a^2\lambda_b^2
-18\lambda_a^4
-12\lambda_a^2\lambda_b^2
-12\lambda_b^4
\\&&\hspace{.3in}
\hspace{.3in}
-16\lambda_b^4
-12\lambda_b^4
-12\lambda_a^2\lambda_b^2
-18\lambda_b^4
-12\lambda_a^2\lambda_b^2
-18\lambda_b^4
+6\lambda_a^4
+6\lambda_b^4
+6\lambda_a^4
+6\lambda_b^4
\Big]
\end{eqnarray*}
For $a=b>0$,
\begin{eqnarray*}
\sum_{N\ge c=d>0}B_{diagonal}=0
\end{eqnarray*}
Therefore, for $a>b>0$,
\begin{eqnarray*}
&&\Ric^N(\mu_{ab}^{Im})
=\sum_{N\ge c>d>0}B_{upper}+\sum_{N\ge c=d>0}B_{diagonal}
\\&&\hspace{.3in}
=\frac{1}{16}\Big[
-40\lambda_a^4
-40\lambda_b^4
-32\lambda_a^2\lambda_b^2
-12\lambda_a^2\sum_{d=1}^{a-1}\lambda_d^2
-8\lambda_a^2\sum_{d=1}^{b-1}\lambda_d^2
-8\lambda_b^2\sum_{d=1}^{a-1}\lambda_d^2
\\&&\hspace{.3in}
\hspace{.3in}
-12\lambda_b^2\sum_{d=1}^{b-1}\lambda_d^2
+8\sum_{d=1}^{a-1}\lambda_d^4
+8\sum_{d=1}^{b-1}\lambda_d^4
-16N\lambda_a^4
-16N\lambda_b^4
-12\lambda_a^2\sum_{c=a+1}^{N}\lambda_c^2
\\&&\hspace{.3in}
\hspace{.3in}
-8\lambda_a^2\sum_{c=b+1}^{N}\lambda_c^2
-8\lambda_b^2\sum_{c=a+1}^{N}\lambda_c^2
-12\lambda_b^2\sum_{c=b+1}^{N}\lambda_c^2
+8\sum_{c=a+1}^{N}\lambda_c^4
+8\sum_{c=b+1}^{N}\lambda_c^4
\Big]
\end{eqnarray*}
and for $a=b>0$,
\begin{eqnarray*}
&&R^N(\mu_{ab}^{Im})
=\sum_{N\ge c>d>0}B_{upper}+\sum_{N\ge c=d>0}B_{diagonal}
=0
\end{eqnarray*}

Next, we compute $R^N(\nu_{ab}^{Re})$ for $a\ge-b>0$. Replacing $b$ with
$-b$, it's equivalent to computing $R^N(\nu_{a,-b}^{Re})$ for $a\ge b>0$.
\begin{align*}
&\Ric^N(\nu_{a,-b}^{Re})\\
&=\sum_{N\ge c>d>0}K(\nu_{a,-b}^{Re},\mu_{cd}^{Re})
+\sum_{N\ge c\ge d>0}K(\nu_{a,-b}^{Re},\mu_{cd}^{Im})\\
&\hspace{.5in}
+\sum_{N\ge c\ge -d>0}K(\nu_{a,-b}^{Re},\nu_{cd}^{Re})
+\sum_{N\ge c\ge -d>0}K(\nu_{a,-b}^{Re},\nu_{cd}^{Im})
\\
&=\sum_{N\ge c>d>0}K(\nu_{a,-b}^{Re},\mu_{cd}^{Re})
+\sum_{N\ge c\ge d>0}K(\nu_{a,-b}^{Re},\mu_{cd}^{Im})\\
&\hspace{.5in}
+\sum_{N\ge c\ge d>0}K(\nu_{a,-b}^{Re},\nu_{c,-d}^{Re})
+\sum_{N\ge c\ge d>0}K(\nu_{a,-b}^{Re},\nu_{c,-d}^{Im})
\\
&=\sum_{N\ge c>d>0}
\Big[K(\nu_{a,-b}^{Re},\mu_{cd}^{Re})+K(\nu_{a,-b}^{Re},\mu_{cd}^{Im})
+K(\nu_{a,-b}^{Re},\nu_{c,-d}^{Re})+K(\nu_{a,-b}^{Re},\nu_{c,-d}^{Im})\Big]
\\
&\hspace{.5in}+\sum_{N\ge c=d>0}
\Big[K(\nu_{a,-b}^{Re},\mu_{cd}^{Re})+K(\nu_{a,-b}^{Re},\mu_{cd}^{Im})
+K(\nu_{a,-b}^{Re},\nu_{c,-d}^{Re})+K(\nu_{a,-b}^{Re},\nu_{c,-d}^{Im})\Big]
\\
&:=\sum_{N\ge c>d>0}C_{upper}+\sum_{N\ge c=d>0}C_{diagonal}
\end{align*}
We have
\begin{eqnarray*}
&&C_{upper}=
\frac{1}{16}\Big[
-160\delta_{a,b}\delta_{a,c}\lambda_a^4
+480\delta_{a,b}\delta_{a,c}\delta_{a,d}\lambda_a^4
-160\delta_{a,b}\delta_{a,d}\lambda_a^4
-16\delta_{a,c}\lambda_a^4
\\&&\hspace{.3in}
-24\delta_{a,c}\delta_{a,d}\lambda_a^4
+8\delta_{a,c}\delta_{b,d}\lambda_a^4
-16\delta_{a,d}\lambda_a^4
+8\delta_{a,d}\delta_{b,c}\lambda_a^4
-8\delta_{a,c}\delta_{a,d}\lambda_a^2\lambda_b^2
\\&&\hspace{.3in}
-8\delta_{b,c}\delta_{b,d}\lambda_a^2\lambda_b^2
-24\delta_{a,b}\delta_{a,d}\lambda_a^2\lambda_c^2
-12\delta_{a,d}\lambda_a^2\lambda_c^2
-8\delta_{b,d}\lambda_a^2\lambda_c^2
\\&&\hspace{.3in}
-24\delta_{a,b}\delta_{a,c}\lambda_a^2\lambda_d^2
-12\delta_{a,c}\lambda_a^2\lambda_d^2
-8\delta_{b,c}\lambda_a^2\lambda_d^2
+8\delta_{a,c}\delta_{b,d}\lambda_b^4
+8\delta_{a,d}\delta_{b,c}\lambda_b^4
\\&&\hspace{.3in}
-16\delta_{b,c}\lambda_b^4
-24\delta_{b,c}\delta_{b,d}\lambda_b^4
-16\delta_{b,d}\lambda_b^4
-8\delta_{a,d}\lambda_b^2\lambda_c^2
-12\delta_{b,d}\lambda_b^2\lambda_c^2
-8\delta_{a,c}\lambda_b^2\lambda_d^2
\\&&\hspace{.3in}
-12\delta_{b,c}\lambda_b^2\lambda_d^2
+16\delta_{a,b}\delta_{a,d}\lambda_c^4
+8\delta_{a,d}\lambda_c^4
+8\delta_{b,d}\lambda_c^4
+16\delta_{a,b}\delta_{a,c}\lambda_d^4
+8\delta_{a,c}\lambda_d^4
+8\delta_{b,c}\lambda_d^4
\Big]
\end{eqnarray*}
and
\begin{eqnarray*}
&&C_{diagonal}=
\frac{1}{16}\Big[
-120\delta_{a,b}\delta_{a,c}\lambda_a^4
+480\delta_{a,b}\delta_{a,c}\delta_{a,d}\lambda_a^4
-120\delta_{a,b}\delta_{a,d}\lambda_a^4
-12\delta_{a,c}\lambda_a^4
\\&&\hspace{.3in}
\hspace{.3in}
-16\delta_{a,c}\delta_{a,d}\lambda_a^4
-6\delta_{a,c}\delta_{b,d}\lambda_a^4
-12\delta_{a,d}\lambda_a^4
-6\delta_{a,d}\delta_{b,c}\lambda_a^4
+4\delta_{a,c}\delta_{b,d}\lambda_a^2\lambda_b^2
\\&&\hspace{.3in}
\hspace{.3in}
+4\delta_{a,d}\delta_{b,c}\lambda_a^2\lambda_b^2
-36\delta_{a,b}\delta_{a,d}\lambda_a^2\lambda_c^2
-18\delta_{a,d}\lambda_a^2\lambda_c^2
-12\delta_{b,d}\lambda_a^2\lambda_c^2
-36\delta_{a,b}\delta_{a,c}\lambda_a^2\lambda_d^2
\\&&\hspace{.3in}
\hspace{.3in}
-18\delta_{a,c}\lambda_a^2\lambda_d^2
-12\delta_{b,c}\lambda_a^2\lambda_d^2
-6\delta_{a,c}\delta_{b,d}\lambda_b^4
-6\delta_{a,d}\delta_{b,c}\lambda_b^4
-12\delta_{b,c}\lambda_b^4
-16\delta_{b,c}\delta_{b,d}\lambda_b^4
\\&&\hspace{.3in}
\hspace{.3in}
-12\delta_{b,d}\lambda_b^4
-12\delta_{a,d}\lambda_b^2\lambda_c^2
-18\delta_{b,d}\lambda_b^2\lambda_c^2
-12\delta_{a,c}\lambda_b^2\lambda_d^2
-18\delta_{b,c}\lambda_b^2\lambda_d^2
\\&&\hspace{.3in}
\hspace{.3in}
+12\delta_{a,b}\delta_{a,d}\lambda_c^4
+6\delta_{a,d}\lambda_c^4
+6\delta_{b,d}\lambda_c^4
+12\delta_{a,b}\delta_{a,c}\lambda_d^4
+6\delta_{a,c}\lambda_d^4
+6\delta_{b,c}\lambda_d^4
\Big]
\end{eqnarray*}
For $a>b>0$,
\begin{eqnarray*}
&&\sum_{N\ge c>d>0}C_{upper}=
\frac{1}{16}\Big[
-16(a-1)\lambda_a^4
+8\lambda_a^4
-16(N-a)\lambda_a^4
-12\lambda_a^2\sum_{c=a+1}^{N}\lambda_c^2
\\&&\hspace{.3in}
\hspace{.3in}
-8\lambda_a^2\sum_{c=b+1}^{N}\lambda_c^2
-12\lambda_a^2\sum_{d=1}^{a-1}\lambda_d^2
-8\lambda_a^2\sum_{d=1}^{b-1}\lambda_d^2
+8\lambda_b^4
\\&&\hspace{.3in}
\hspace{.3in}
-16(b-1)\lambda_b^4
-16(N-b)\lambda_b^4
-8\lambda_b^2\sum_{c=a+1}^{N}\lambda_c^2
-12\lambda_b^2\sum_{c=b+1}^{N}\lambda_c^2
-8\lambda_b^2\sum_{d=1}^{a-1}\lambda_d^2
\\&&\hspace{.3in}
\hspace{.3in}
-12\lambda_b^2\sum_{d=1}^{b-1}\lambda_d^2
+8\sum_{c=a+1}^{N}\lambda_c^4
+8\sum_{c=b+1}^{N}\lambda_c^4
+8\sum_{d=1}^{a-1}\lambda_d^4
+8\sum_{d=1}^{b-1}\lambda_d^4
\Big]
\end{eqnarray*}
For $a=b>0$,
\begin{eqnarray*}
&&\sum_{N\ge c>d>0}C_{upper}=
\frac{1}{16}\Big[
-160(a-1)\lambda_a^4
-160(N-a)\lambda_a^4
-16(a-1)\lambda_a^4
-16(N-a)\lambda_a^4
\\&&\hspace{.3in}
\hspace{.3in}
-24\lambda_a^2\sum_{c=a+1}^{N}\lambda_c^2
-12\lambda_a^2\sum_{c=a+1}^{N}\lambda_c^2
-8\lambda_a^2\sum_{c=a+1}^{N}\lambda_c^2
-24\lambda_a^2\sum_{d=1}^{a-1}\lambda_d^2
-12\lambda_a^2\sum_{d=1}^{a-1}\lambda_d^2
\\&&\hspace{.3in}
\hspace{.3in}
-8\lambda_a^2\sum_{d=1}^{a-1}\lambda_d^2
-16(a-1)\lambda_a^4
-16(N-a)\lambda_a^4
-8\lambda_a^2\sum_{c=a+1}^{N}\lambda_c^2
\\&&\hspace{.3in}
\hspace{.3in}
-12\lambda_a^2\sum_{c=a+1}^{N}\lambda_c^2
-8\lambda_a^2\sum_{d=1}^{a-1}\lambda_d^2
-12\lambda_a^2\sum_{d=1}^{a-1}\lambda_d^2
+16\sum_{c=a+1}^{N}\lambda_c^4
+8\sum_{c=a+1}^{N}\lambda_c^4
\\&&\hspace{.3in}
\hspace{.3in}
+8\sum_{c=a+1}^{N}\lambda_c^4
+16\sum_{d=1}^{a-1}\lambda_d^4
+8\sum_{d=1}^{a-1}\lambda_d^4
+8\sum_{d=1}^{a-1}\lambda_d^4
\Big]
\end{eqnarray*}
For $a>b>0$,
\begin{eqnarray*}
&&\sum_{N\ge c=d>0}C_{diagonal}=
\frac{1}{16}\Big[
-12\lambda_a^4
-16\lambda_a^4
-12\lambda_a^4
-18\lambda_a^4
-12\lambda_a^2\lambda_b^2
-18\lambda_a^4
-12\lambda_a^2\lambda_b^2
-12\lambda_b^4
\\&&\hspace{.3in}
\hspace{.3in}
-16\lambda_b^4
-12\lambda_b^4
-12\lambda_a^2\lambda_b^2
-18\lambda_b^4
-12\lambda_a^2\lambda_b^2
-18\lambda_b^4
+6\lambda_a^4
+6\lambda_b^4
+6\lambda_a^4
+6\lambda_b^4
\Big]
\end{eqnarray*}
For $a=b>0$,
\begin{eqnarray*}
\sum_{N\ge c=d>0}C_{diagonal}=0
\end{eqnarray*}
Therefore, for $a>-b>0$,
\begin{eqnarray*}
&&\Ric^N(\nu_{ab}^{Re})
=\sum_{N\ge c>d>0}C_{upper}+\sum_{N\ge c=d>0}C_{diagonal}
\\&&\hspace{.3in}
=\frac{1}{16}\Big[
-40\lambda_a^4
-40\lambda_b^4
-48\lambda_a^2\lambda_b^2
-12\lambda_a^2\sum_{d=1}^{a-1}\lambda_d^2
-8\lambda_a^2\sum_{d=1}^{b-1}\lambda_d^2
-8\lambda_b^2\sum_{d=1}^{a-1}\lambda_d^2
\\&&\hspace{.3in}
\hspace{.3in}
-12\lambda_b^2\sum_{d=1}^{b-1}\lambda_d^2
+8\sum_{d=1}^{a-1}\lambda_d^4
+8\sum_{d=1}^{b-1}\lambda_d^4
-16N\lambda_a^4
-16N\lambda_b^4
-12\lambda_a^2\sum_{c=a+1}^{N}\lambda_c^2
\\&&\hspace{.3in}
\hspace{.3in}
-8\lambda_a^2\sum_{c=b+1}^{N}\lambda_c^2
-8\lambda_b^2\sum_{c=a+1}^{N}\lambda_c^2
-12\lambda_b^2\sum_{c=b+1}^{N}\lambda_c^2
+8\sum_{c=a+1}^{N}\lambda_c^4
+8\sum_{c=b+1}^{N}\lambda_c^4
\Big]
\end{eqnarray*}
and, for $a=-b>0$,
\begin{eqnarray*}
&&R^N(\nu_{ab}^{Re})
=\sum_{N\ge c>d>0}C_{upper}+\sum_{N\ge c=d>0}C_{diagonal}
\\&&\hspace{.3in}
=\frac{1}{16}\Big[
-192\lambda_a^4
-32\sum_{d=1}^{a-1}\lambda_d^4
-192N\lambda_a^4
-32\sum_{c=a+1}^{N}\lambda_c^4
\Big]
\end{eqnarray*}

Next, we compute $R^N(\nu_{ab}^{Im})$ for $a\ge-b>0$. Replacing $b$ with
$-b$, it's equivalent to computing $R^N(\nu_{a,-b}^{Im})$ for $a\ge b>0$.
\begin{align*}
&\Ric^N(\nu_{a,-b}^{Im})\\
&=\sum_{N\ge c>d>0}K(\nu_{a,-b}^{Im},\mu_{cd}^{Re})
+\sum_{N\ge c\ge d>0}K(\nu_{a,-b}^{Im},\mu_{cd}^{Im})\\
&\hspace{.5in}
+\sum_{N\ge c\ge -d>0}K(\nu_{a,-b}^{Im},\nu_{cd}^{Re})
+\sum_{N\ge c\ge -d>0}K(\nu_{a,-b}^{Im},\nu_{cd}^{Im})
\\
&=\sum_{N\ge c>d>0}K(\nu_{a,-b}^{Im},\mu_{cd}^{Re})
+\sum_{N\ge c\ge d>0}K(\nu_{a,-b}^{Im},\mu_{cd}^{Im})\\
&\hspace{.5in}
+\sum_{N\ge c\ge d>0}K(\nu_{a,-b}^{Im},\nu_{c,-d}^{Re})
+\sum_{N\ge c\ge d>0}K(\nu_{a,-b}^{Im},\nu_{c,-d}^{Im})
\\
&=\sum_{N\ge c>d>0}
\Big[K(\nu_{a,-b}^{Im},\mu_{cd}^{Re})+K(\nu_{a,-b}^{Im},\mu_{cd}^{Im})
+K(\nu_{a,-b}^{Im},\nu_{c,-d}^{Re})+K(\nu_{a,-b}^{Im},\nu_{c,-d}^{Im})\Big]
\\
&\hspace{.5in}+\sum_{N\ge c=d>0}
\Big[K(\nu_{a,-b}^{Im},\mu_{cd}^{Re})+K(\nu_{a,-b}^{Im},\mu_{cd}^{Im})
+K(\nu_{a,-b}^{Im},\nu_{c,-d}^{Re})+K(\nu_{a,-b}^{Im},\nu_{c,-d}^{Im})\Big]
\\
&:=\sum_{N\ge c>d>0}D_{upper}+\sum_{N\ge c=d>0}D_{diagonal}
\end{align*}
We have
\begin{eqnarray*}
&&D_{upper}=
\frac{1}{16}\Big[
+32\delta_{a,b}\delta_{a,c}\lambda_a^4
+32\delta_{a,b}\delta_{a,d}\lambda_a^4
-16\delta_{a,c}\lambda_a^4
-24\delta_{a,c}\delta_{a,d}\lambda_a^4
\\&&\hspace{.3in}
+8\delta_{a,c}\delta_{b,d}\lambda_a^4
-16\delta_{a,d}\lambda_a^4
+8\delta_{a,d}\delta_{b,c}\lambda_a^4
-8\delta_{a,c}\delta_{a,d}\lambda_a^2\lambda_b^2
+16\delta_{a,c}\delta_{b,d}\lambda_a^2\lambda_b^2
\\&&\hspace{.3in}
+16\delta_{a,d}\delta_{b,c}\lambda_a^2\lambda_b^2
-8\delta_{b,c}\delta_{b,d}\lambda_a^2\lambda_b^2
+40\delta_{a,b}\delta_{a,d}\lambda_a^2\lambda_c^2
-12\delta_{a,d}\lambda_a^2\lambda_c^2
-8\delta_{b,d}\lambda_a^2\lambda_c^2
\\&&\hspace{.3in}
+40\delta_{a,b}\delta_{a,c}\lambda_a^2\lambda_d^2
-12\delta_{a,c}\lambda_a^2\lambda_d^2
-8\delta_{b,c}\lambda_a^2\lambda_d^2
+8\delta_{a,c}\delta_{b,d}\lambda_b^4
+8\delta_{a,d}\delta_{b,c}\lambda_b^4
\\&&\hspace{.3in}
-16\delta_{b,c}\lambda_b^4
-24\delta_{b,c}\delta_{b,d}\lambda_b^4
-16\delta_{b,d}\lambda_b^4
-8\delta_{a,d}\lambda_b^2\lambda_c^2
-12\delta_{b,d}\lambda_b^2\lambda_c^2
-8\delta_{a,c}\lambda_b^2\lambda_d^2
\\&&\hspace{.3in}
-12\delta_{b,c}\lambda_b^2\lambda_d^2
-16\delta_{a,b}\delta_{a,d}\lambda_c^4
+8\delta_{a,d}\lambda_c^4
+8\delta_{b,d}\lambda_c^4
-16\delta_{a,b}\delta_{a,c}\lambda_d^4
+8\delta_{a,c}\lambda_d^4
+8\delta_{b,c}\lambda_d^4
\Big]
\end{eqnarray*}
and
\begin{eqnarray*}
&&D_{diagonal}=
\frac{1}{16}\Big[
+24\delta_{a,b}\delta_{a,c}\lambda_a^4
-32\delta_{a,b}\delta_{a,c}\delta_{a,d}\lambda_a^4
+24\delta_{a,b}\delta_{a,d}\lambda_a^4
-12\delta_{a,c}\lambda_a^4
\\&&\hspace{.3in}
-16\delta_{a,c}\delta_{a,d}\lambda_a^4
+6\delta_{a,c}\delta_{b,d}\lambda_a^4
-12\delta_{a,d}\lambda_a^4
+6\delta_{a,d}\delta_{b,c}\lambda_a^4
+20\delta_{a,c}\delta_{b,d}\lambda_a^2\lambda_b^2
\\&&\hspace{.3in}
+20\delta_{a,d}\delta_{b,c}\lambda_a^2\lambda_b^2
+60\delta_{a,b}\delta_{a,d}\lambda_a^2\lambda_c^2
-18\delta_{a,d}\lambda_a^2\lambda_c^2
-12\delta_{b,d}\lambda_a^2\lambda_c^2
+60\delta_{a,b}\delta_{a,c}\lambda_a^2\lambda_d^2
\\&&\hspace{.3in}
-18\delta_{a,c}\lambda_a^2\lambda_d^2
-12\delta_{b,c}\lambda_a^2\lambda_d^2
+6\delta_{a,c}\delta_{b,d}\lambda_b^4
+6\delta_{a,d}\delta_{b,c}\lambda_b^4
-12\delta_{b,c}\lambda_b^4
\\&&\hspace{.3in}
-16\delta_{b,c}\delta_{b,d}\lambda_b^4
-12\delta_{b,d}\lambda_b^4
-12\delta_{a,d}\lambda_b^2\lambda_c^2
-18\delta_{b,d}\lambda_b^2\lambda_c^2
-12\delta_{a,c}\lambda_b^2\lambda_d^2
\\&&\hspace{.3in}
-18\delta_{b,c}\lambda_b^2\lambda_d^2
-12\delta_{a,b}\delta_{a,d}\lambda_c^4
+6\delta_{a,d}\lambda_c^4
+6\delta_{b,d}\lambda_c^4
-12\delta_{a,b}\delta_{a,c}\lambda_d^4
+6\delta_{a,c}\lambda_d^4
+6\delta_{b,c}\lambda_d^4
\Big]
\end{eqnarray*}
For $a>b>0$,
\begin{eqnarray*}
&&\sum_{N\ge c>d>0}D_{upper}=
\frac{1}{16}\Big[
-16(a-1)\lambda_a^4
+8\lambda_a^4
-16(N-a)\lambda_a^4
+16\lambda_a^2\lambda_b^2
\\&&\hspace{.3in}
\hspace{.3in}
-12\lambda_a^2\sum_{c=a+1}^{N}\lambda_c^2
-8\lambda_a^2\sum_{c=b+1}^{N}\lambda_c^2
-12\lambda_a^2\sum_{d=1}^{a-1}\lambda_d^2
-8\lambda_a^2\sum_{d=1}^{b-1}\lambda_d^2
+8\lambda_b^4
\\&&\hspace{.3in}
\hspace{.3in}
-16(b-1)\lambda_b^4
-16(N-b)\lambda_b^4
-8\lambda_b^2\sum_{c=a+1}^{N}\lambda_c^2
-12\lambda_b^2\sum_{c=b+1}^{N}\lambda_c^2
\\&&\hspace{.3in}
\hspace{.3in}
-8\lambda_b^2\sum_{d=1}^{a-1}\lambda_d^2
-12\lambda_b^2\sum_{d=1}^{b-1}\lambda_d^2
+8\sum_{c=a+1}^{N}\lambda_c^4
+8\sum_{c=b+1}^{N}\lambda_c^4
+8\sum_{d=1}^{a-1}\lambda_d^4
+8\sum_{d=1}^{b-1}\lambda_d^4
\Big]
\end{eqnarray*}
For $a=b>0$,
\begin{eqnarray*}
&&\sum_{N\ge c>d>0}D_{upper}=
\frac{1}{16}\Big[
+32(a-1)\lambda_a^4
+32(N-a)\lambda_a^4
-16(a-1)\lambda_a^4
-16(N-a)\lambda_a^4
\\&&\hspace{.3in}
\hspace{.3in}
+40\lambda_a^2\sum_{c=a+1}^{N}\lambda_c^2
-12\lambda_a^2\sum_{c=a+1}^{N}\lambda_c^2
-8\lambda_a^2\sum_{c=a+1}^{N}\lambda_c^2
+40\lambda_a^2\sum_{d=1}^{a-1}\lambda_d^2
-12\lambda_a^2\sum_{d=1}^{a-1}\lambda_d^2
\\&&\hspace{.3in}
\hspace{.3in}
-8\lambda_a^2\sum_{d=1}^{a-1}\lambda_d^2
-16(a-1)\lambda_a^4
-16(N-a)\lambda_a^4
-8\lambda_a^2\sum_{c=a+1}^{N}\lambda_c^2
\\&&\hspace{.3in}
\hspace{.3in}
-12\lambda_a^2\sum_{c=a+1}^{N}\lambda_c^2
-8\lambda_a^2\sum_{d=1}^{a-1}\lambda_d^2
-12\lambda_a^2\sum_{d=1}^{a-1}\lambda_d^2
-16\sum_{c=a+1}^{N}\lambda_c^4
\\&&\hspace{.3in}
\hspace{.3in}
+8\sum_{c=a+1}^{N}\lambda_c^4
+8\sum_{c=a+1}^{N}\lambda_c^4
-16\sum_{d=1}^{a-1}\lambda_d^4
+8\sum_{d=1}^{a-1}\lambda_d^4
+8\sum_{d=1}^{a-1}\lambda_d^4
\Big]
\end{eqnarray*}
For $a>b>0$,
\begin{eqnarray*}
&&\sum_{N\ge c=d>0}D_{diagonal}=
\frac{1}{16}\Big[
-12\lambda_a^4
-16\lambda_a^4
-12\lambda_a^4
-18\lambda_a^4
-12\lambda_a^2\lambda_b^2
-18\lambda_a^4
-12\lambda_a^2\lambda_b^2
\\&&\hspace{.3in}
-12\lambda_b^4
-16\lambda_b^4
-12\lambda_b^4
-12\lambda_a^2\lambda_b^2
-18\lambda_b^4
-12\lambda_a^2\lambda_b^2
-18\lambda_b^4
+6\lambda_a^4
+6\lambda_b^4
+6\lambda_a^4
+6\lambda_b^4
\Big]
\end{eqnarray*}
For $a=b>0$,
\begin{eqnarray*}
\sum_{N\ge c=d>0}D_{diagonal}=0
\end{eqnarray*}
Therefore, for $a>-b>0$,
\begin{eqnarray*}
&&\Ric^N(\nu_{ab}^{Im})
=\sum_{N\ge c>d>0}D_{upper}+\sum_{N\ge c=d>0}D_{diagonal}
\\&&\hspace{.3in}
=\frac{1}{16}\Big[
-40\lambda_a^4
-40\lambda_b^4
-32\lambda_a^2\lambda_b^2
-12\lambda_a^2\sum_{d=1}^{a-1}\lambda_d^2
-8\lambda_a^2\sum_{d=1}^{b-1}\lambda_d^2
-8\lambda_b^2\sum_{d=1}^{a-1}\lambda_d^2
\\&&\hspace{.3in}
\hspace{.3in}
-12\lambda_b^2\sum_{d=1}^{b-1}\lambda_d^2
+8\sum_{d=1}^{a-1}\lambda_d^4
+8\sum_{d=1}^{b-1}\lambda_d^4
-16N\lambda_a^4
-16N\lambda_b^4
-12\lambda_a^2\sum_{c=a+1}^{N}\lambda_c^2
\\&&\hspace{.3in}
\hspace{.3in}
-8\lambda_a^2\sum_{c=b+1}^{N}\lambda_c^2
-8\lambda_b^2\sum_{c=a+1}^{N}\lambda_c^2
-12\lambda_b^2\sum_{c=b+1}^{N}\lambda_c^2
+8\sum_{c=a+1}^{N}\lambda_c^4
+8\sum_{c=b+1}^{N}\lambda_c^4
\Big]
\end{eqnarray*}
and, for $a=-b>0$,
\begin{eqnarray*}
&&\Ric^N(\nu_{ab}^{Im})
=\sum_{N\ge c>d>0}D_{upper}+\sum_{N\ge c=d>0}D_{diagonal}
=0
\end{eqnarray*}

\end{proof}

\begin{remark}
Taking $N\to\infty$, we see that the Ricci curvature is negative infinity
in most directions.
\end{remark}

\bibliographystyle{amsplain}

\end{document}